\def\paragraph{\@startsection{paragraph}{4}%
  \z@\z@{-\fontdimen2\font}%
  {\normalfont\bfseries}}
\newtheorem{theorem}{Theorem}
\newtheorem{lemma}[theorem]{Lemma}
\newtheorem{cor}[theorem]{Corollary}
\newtheorem{prop}[theorem]{Proposition}
\crefname{theorem}{Theorem}{Theorems}
\crefname{lemma}{Lemma}{Lemmas}
\crefname{prop}{Proposition}{Propositions}
\crefname{cor}{Corollary}{Corollaries}
\crefname{section}{Section}{Sections}
\crefname{figure}{Figure}{Figures}
\newcommand{\comm}[1]{}
\newcommand{\df}{\textbf}
\newcommand{\ZZ}{{\mathbb Z}}
\DeclareMathOperator{\id}{id}
\DeclareMathOperator{\rev}{rev}
\title{Representing Permutations with Few Moves}
\author[Bereg]{Sergey Bereg}
\address[Sergey Bereg]{University of Texas at Dallas, USA} \email{besp@utdallas.edu}
\author[Holroyd]{Alexander E.~Holroyd}
\address[Alexander E.~Holroyd]{Microsoft Research, USA} \email{holroyd@microsoft.com}
\author[Nachmanson]{\\ Lev Nachmanson}
\address[Lev Nachmanson]{Microsoft Research, USA} \email{levnach@microsoft.com}
\author[Pupyrev]{Sergey Pupyrev}
\address[Sergey Pupyrev]{University of Arizona, USA} \email{spupyrev@gmail.com}
\keywords{permutation, permutation diagram, reduced word, graph drawing, permutation pattern}
\subjclass[2010]{05D99; 05A05; 68R10}
\date{11 August 2015}
\begin{document}

\begin{abstract}
Consider a finite sequence of permutations of the elements $1,\ldots,n$, with
the property that each element changes its position by at most $1$ from any
permutation to the next.  We call such a sequence a \textit{tangle}, and we
define a \textit{move} of element $i$ to be a maximal subsequence of at least
two consecutive permutations during which its positions form an arithmetic
progression of common difference $+1$ or $-1$.  We prove that for any initial
and final permutations, there is a tangle connecting them in which each
element makes at most $5$ moves, and another in which the total number of
moves is at most $4n$.  On the other hand, there exist permutations that
require at least $3$ moves for some element, and at least $2n-2$ moves in
total.  If we further require that every pair of elements exchange positions
at most once, then any two permutations can be connected by a tangle with at
most $O(\log n)$ moves per element, but we do not know whether this can be
reduced to $O(1)$ per element, or to $O(n)$ in total.  A key tool is the
introduction of certain restricted classes of tangle that perform
pattern-avoiding permutations.
\end{abstract}

\maketitle

\section{Introduction}

Let $S_n$ be the symmetric group of permutations $\pi=[\pi(1),\ldots,\pi(n)]$
on $\{1,\ldots,n\}$, with composition defined via
$(\pi\cdot\rho)(i)=\pi(\rho(i))$.  It is natural to
represent a permutation $\pi$ as a composition of simpler permutations.
Define the \df{swap} $s(i)$ to be the permutation $[1,\ldots,i+1,i,\ldots,n]$
that interchanges $i$ and $i+1$.  We call two permutations $\pi$ and $\rho$
\df{adjacent} if they are related by a collection of non-overlapping swaps,
i.e.\ if $\rho=\pi\cdot s(p_1)\cdots s(p_k)$ where $|p_i-p_j|\geq 2$ for
$i\neq j$.  Equivalently, $\pi$ and $\rho$ are adjacent if
$|\pi^{-1}(i)-\rho^{-1}(i)|\leq 1$ for every $i$.
A \df{tangle} is a finite sequence of permutations in which each
consecutive pair is adjacent.  If a tangle $T$ starts with the identity
permutation $\id=[1,\ldots,n]$ and ends with $\pi$, we say that $T$
\df{performs} $\pi$.

It is straightforward to see that for any permutation $\pi$
there is some tangle that performs $\pi$.  Our goal is to
find tangles with simple and elegant structure.  We may
visualize a tangle as follows.  Consider the sequence of
permutations written in one-line notation
$\pi=[\pi(1),\ldots,\pi(n)]$ in a column from top to bottom
as in \cref{paths}, with equal horizontal and
vertical spacings between symbols.  Then, for each
$i=1,\ldots,n$, draw a polygonal path connecting all
occurrences of the number $i$, from top to bottom, as in
the figure.  The path corresponding to element
$i$ is called \df{path} $i$.  Each line segment of a path
is either vertical or at an angle of $\pm 45^\circ$ to the
vertical.  We call a maximal non-vertical line segment of a
path a \df{move}.  Thus, a move corresponds to a maximal
sequence of swaps $s(p_i)$ that occur between the adjacent
elements in some interval of permutations of the tangle,
and with their locations $p_i$ forming an arithmetic
progression with common difference $\pm 1$.  See
\cref{moves}.  It is convenient to illustrate the structure by shading the area
occupied by swaps, as in \cref{shading}.
Our focus is on minimizing moves among tangles that perform a given permutation.
\begin{figure}
\centering
\begin{subfigure}[t]{.3\textwidth}
\centering
\begin{tikzpicture}[x=14pt,y=14pt]
\large
  \matrix (T) [matrix of math nodes,column sep={14pt,between origins},row
    sep={14pt,between origins}] {
  1&2&3&4&5&6\\
  1&2&3&4&5&6\\
  1&2&4&3&6&5\\
  1&4&2&6&3&5\\
  1&4&2&6&5&3\\
  1&4&2&5&6&3\\
  1&4&2&5&6&3\\
 };
  \draw[red,very thick] (T-1-1.center)
  --++(0,-1)--++(0,-1)--++(0,-1)--++(0,-1)--++(0,-2);
  \draw[orange,very thick] (T-1-2.center)
  --++(0,-1)--++(0,-1)--++(1,-1)--++(0,-1)--++(0,-2);
  \draw[yellow!70!black,very thick] (T-1-3.center)
  --++(0,-1)--++(1,-1)--++(1,-1)--++(1,-1)--++(0,-2);
  \draw[green!70!black,very thick] (T-1-4.center)
  --++(0,-1)--++(-1,-1)--++(-1,-1)--++(0,-1)--++(0,-2);
  \draw[blue,very thick] (T-1-5.center)
  --++(0,-1)--++(1,-1)--++(0,-1)--++(-1,-1)--++(-1,-1)--++(0,-1);
  \draw[purple!50!blue,very thick] (T-1-6.center)
  --++(0,-1)--++(-1,-1)--++(-1,-1)--++(0,-1)--++(1,-1)--++(0,-1);
\end{tikzpicture}
\caption{Permutations and paths.}
\label{paths}
\end{subfigure}
\hfill
\begin{subfigure}[t]{.3\textwidth}
\centering
\tikzset{blob/.style={shape=circle,draw,thick,fill=none,inner sep=3pt}}
\begin{tikzpicture}[x=14pt,y=14pt]
  \draw[red,thick] (1,0)node[above]{$1$}
  --++(0,-1)--++(0,-1)--++(0,-1)--++(0,-1)--++(0,-2)node[below]{$1$};
  \draw[orange,thick] (2,0)node[above]{$2$}
  --++(0,-1)--++(0,-1)--++(1,-1)--++(0,-1)--++(0,-2)node[below]{$2$};
  \draw[yellow!70!black,thick] (3,0)node[above]{$3$}
  --++(0,-1)--++(1,-1)--++(1,-1)--++(1,-1)--++(0,-2)node[below]{$3$};
  \draw[green!70!black,thick] (4,0)node[above]{$4$}
  --++(0,-1)--++(-1,-1)--++(-1,-1)--++(0,-1)--++(0,-2)node[below]{$4$};
  \draw[blue,thick] (5,0)node[above]{$5$}
  --++(0,-1)--++(1,-1)--++(0,-1)--++(-1,-1)--++(-1,-1)--++(0,-1)node[below]{$5$};
  \draw[purple!50!blue,thick] (6,0)node[above]{$6$}
  --++(0,-1)--++(-1,-1)--++(-1,-1)--++(0,-1)--++(1,-1)--++(0,-1)node[below]{$6$};
  \draw[line width=2.5pt,orange,every node/.style=blob] (2,-2)node{}--++(1,-1)node{};
  \draw[line width=2.5pt,green!70!black,every node/.style=blob] (4,-1)node{}--++(-2,-2)node{};
  \draw[line width=2.5pt,yellow!70!black,every node/.style=blob] (3,-1)node{}--++(3,-3)node{};
  \draw[line width=2.5pt,blue,every node/.style=blob]
  (5,-1)node{}--++(1,-1)node{} (6,-3)node{}--++(-2,-2)node{};
  \draw[line width=2.5pt,purple!50!blue,every node/.style=blob]
  (6,-1)node{}--++(-2,-2)node{} (4,-4)node{}--++(1,-1)node{};
\end{tikzpicture}
\caption{Moves (thickened lines) and corners (circles).}
\label{moves}
\end{subfigure}
\hfill
\begin{subfigure}[t]{.3\textwidth}
\centering
\begin{tikzpicture}[x=14pt,y=14pt]
  \fill[blue!20] (3.5,-.5)--++(-2,-2)--++(1,-1)--++(1,1)--++(1,-1)
  --++(-1,-1)--++(1,-1)--++(1,1)
  --++(1,1)--++(-1,1)--++(1,1)--++(-1,1)--++(-1,-1)--cycle;
  \draw[thick] (1,0)node[above]{$1$}
  --++(0,-1)--++(0,-1)--++(0,-1)--++(0,-1)--++(0,-2)node[below]{$1$};
  \draw[thick] (2,0)node[above]{$2$}
  --++(0,-1)--++(0,-1)--++(1,-1)--++(0,-1)--++(0,-2)node[below]{$2$};
  \draw[thick] (3,0)node[above]{$3$}
  --++(0,-1)--++(1,-1)--++(1,-1)--++(1,-1)--++(0,-2)node[below]{$3$};
  \draw[thick] (4,0)node[above]{$4$}
  --++(0,-1)--++(-1,-1)--++(-1,-1)--++(0,-1)--++(0,-2)node[below]{$4$};
  \draw[thick] (5,0)node[above]{$5$}
  --++(0,-1)--++(1,-1)--++(0,-1)--++(-1,-1)--++(-1,-1)--++(0,-1)node[below]{$5$};
  \draw[thick] (6,0)node[above]{$6$}
  --++(0,-1)--++(-1,-1)--++(-1,-1)--++(0,-1)--++(1,-1)--++(0,-1)node[below]{$6$};
\end{tikzpicture}
\caption{Shading the swaps.}
\label{shading}
\end{subfigure}
\caption{A tangle performing the permutation
$\pi=[1,4,2,5,6,3]$, with $7$ moves.}
\end{figure}
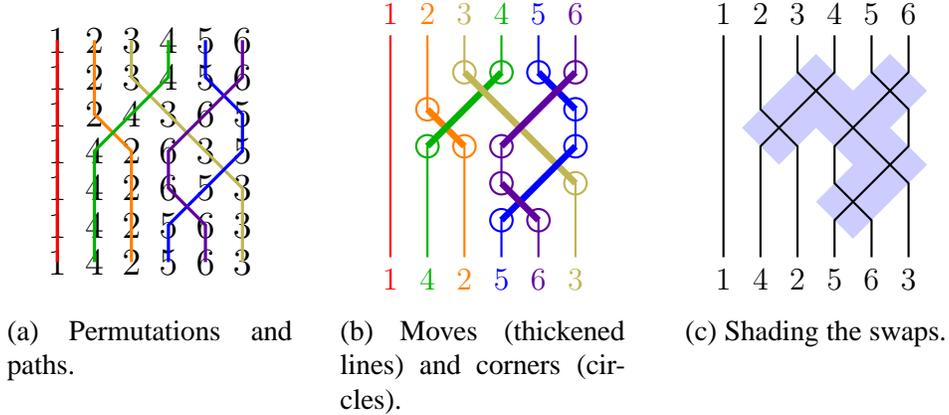

Our first main result is that any permutation can be performed by a tangle
with a bounded number of moves per path (and therefore $O(n)$ moves in total
as $n\to\infty$).  In contrast, various natural greedy algorithms for
constructing a tangle (including one proposed in \cite{w-nrsil-91}) require
$\Omega(n^2)$ moves in total in the worst case.  (See \cref{greedy} for
examples.)

\begin{theorem}\label{fish}
For any permutation $\pi\in S_n$, there is a tangle performing $\pi$ that has
at most $5$ moves in each path.
\end{theorem}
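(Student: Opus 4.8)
The plan is to realise $\pi$ as a product of a bounded number of permutations, each lying in a restricted class for which one can hand-build a tangle that spends very few moves per path, and then to splice these tangles together.

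\textbf{Building blocks.} Call a tangle a \emph{sweep} if every path makes at most one move, i.e.\ every element moves monotonically from start to finish, and call $\sigma\in S_n$ \emph{sweepable} if some sweep performs it. I would isolate a conveniently parametrised pattern-avoiding family of sweepable (or nearly sweepable, say $\le 2$-move) permutations; the natural seed is the set of $\sigma$ whose one-line notation is a \emph{shuffle} of two increasing runs of consecutive values $1,\dots,k$ and $k+1,\dots,n$ — in particular such $\sigma$ avoid $321$. For these the sweep is explicit: the ``large'' block and the ``small'' block slide monotonically in opposite directions toward their targets, each block staying internally sorted, and one checks by a greedy left-to-right schedule that no element ever has to reverse direction and that the swaps performed at any one time step are supported on disjoint pairs of positions. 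Closing this family under the evident symmetries of the setup (reversing positions, reversing values, inverting) and allowing reverse runs as well enlarges the toolkit; these are the promised restricted classes of tangle that perform pattern-avoiding permutations.

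\textbf{Decomposition and splicing.} Next I would prove a structural lemma: every $\pi\in S_n$ factors as $\pi=\sigma_1\sigma_2\cdots\sigma_m$ with $m$ a small absolute constant and each $\sigma_j$ in the toolkit above. The splicing is routine: if $T_j$ performs $\sigma_j$, then left-multiplying every permutation of $T_j$ by $\sigma_1\cdots\sigma_{j-1}$ still yields a tangle (adjacency is preserved under left multiplication, since $(\sigma\rho)^{-1}(i)=\rho^{-1}(\sigma^{-1}(i))$) and merely relabels its paths; concatenating the re-based tangles produces a tangle performing $\pi$ in which each path makes at most the sum over $j$ of the moves made by the corresponding path of $T_j$. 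A crude version of the building-block step then already yields some absolute constant bound on the moves per path. To bring it down to $5$ I would choose the factorisation so that consecutive tangles \emph{dovetail} at (some of) the junctions: the last move of a path in $T_j$ and its first move in $T_{j+1}$ run in the same direction with no intervening pause, so they fuse into a single move and one move per path is saved there; the count is arranged so that the total is exactly $5$.

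\textbf{Main obstacle.} The difficulty is twofold. First, the scheduling in the building-block step: proving that a single monotone pass genuinely realises the whole restricted class while keeping the per-time-step swaps disjoint, and doing so in a canonical enough form that dovetailing can later be enforced. Second, the decomposition together with the final bookkeeping: a single fixed pattern-avoiding class is far too small to generate all of $S_n$ in boundedly many factors, so the factorisation must be chosen adaptively to $\pi$, and finding an adaptive decomposition that simultaneously uses few factors and dovetails well — so that the constant is $5$ rather than something larger — is the crux of the argument.
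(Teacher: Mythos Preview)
Your decomposition lemma cannot hold. Every class in your toolkit --- shuffles of two monotone runs, $321$-avoiders, and their images under the dihedral symmetries and inversion --- has at most Catalan-many elements, i.e.\ $O(c^n)$ for a fixed constant $c$. A product of $m$ permutations drawn (even adaptively) from a finite union of such classes therefore realises at most $O(c^{mn})$ elements of $S_n$, while $n!$ grows like $(n/e)^n$; hence $m$ is forced to be of order $\log n$, not bounded. Adaptivity does not help: it only lets you pick \emph{which} exponential-size class each factor comes from, and a finite union of exponential-size classes is still exponential-size. This is the same counting obstruction that underlies \cref{counting}, and indeed your scheme, if it worked, would produce tangles with $O(1)$ clusters and $O(n)$ corners, contradicting that theorem.

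What you are missing is a gadget of \emph{factorial} capacity. The paper's matrix gadget is a $45^\circ$-rotated $m\times m$ square (with $m=n/2$) filled with swaps at all odd locations except those on the graph of a chosen $\alpha\in S_m$; every path makes exactly two moves through it, yet it performs $\alpha$ on the left half and $\alpha^{-1}$ on the right half, so $m!$ permutations are available rather than $O(c^n)$. The construction for \cref{fish} is then: one splitter (one move per path) reduces the problem to an arbitrary $(m,m)$-split permutation; that is handled by two matrix gadgets with a left reflector ($213$-avoiding) and a right reflector ($132$-avoiding) sandwiched between them, all abutting so that each path makes only four further moves. The nontrivial step is choosing the four index permutations consistently: writing the split target as $(\pi_1,\pi_2)$, the constraints reduce to making $\rho_2\cdot\pi_1^{-1}$ and $\pi_2\cdot\rho_1^{-1}$ conjugate in $S_m$, and the key lemma (\cref{cyclic} and \cref{more-cyclic}) is that for any $\gamma$ one can choose a $132$-avoiding (resp.\ $213$-avoiding) $\rho$ making $\rho\cdot\gamma$ a single $m$-cycle --- applying this twice forces both sides to be cyclic, hence automatically conjugate.
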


Shifting our attention to {\em total} moves, we can reduce
the constant from $5$ to $4$.

\begin{theorem}\label{8n}
For any permutation $\pi\in S_n$, there is a tangle performing $\pi$ that has
at most $4n$ moves in total.
\end{theorem}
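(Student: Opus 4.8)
The plan is to squeeze the extra factor of $n$ out of the argument behind \cref{fish} by bounding totals block-by-block rather than bounding every path by $5$. I would start from the building blocks already needed for \cref{fish}: the \emph{monotone} tangles, in which every path makes at most one move, and the \emph{reversal-type} tangles on an interval $I$, in which the two extreme elements of $I$ make one move, every other element of $I$ makes at most two, and every element outside $I$ stays vertical (so a reversal-type block on $I$ contributes at most $2|I|-2$ moves in total, not $2n$). The first thing to prove is a clean \emph{additivity} statement: if a tangle for $\pi$ is obtained by vertically concatenating blocks of these two kinds acting on pairwise disjoint or nested intervals, then the total number of moves is at most the sum of the per-block totals, and moreover an element that is nontrivial in two abutting blocks can have its incoming and outgoing diagonals made collinear, so those two moves coalesce into one. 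This coalescing, together with the fact that fixed points (and more generally elements of small displacement) are essentially free, is the source of the gap between $5n$ and $4n$.

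With additivity in hand, I would choose a decomposition of $\pi$ into at most four such blocks, arranged so that each element is nontrivial in at most two of them: for instance, split $\{1,\dots,n\}$ into the right-movers ($\pi^{-1}(i)>i$) and the left-movers, send the right-movers to their targets using a monotone block followed by a reversal-type block, do the symmetric thing for the left-movers inside a disjoint horizontal band of the diagram, and keep the fixed points vertical throughout. By the additivity bound the grand total is at most $2\cdot(\#\text{right-movers})+2\cdot(\#\text{left-movers})$ minus the coalescing savings, which is at most $4n$ (indeed $4n-O(1)$). The interference between the two groups is handled by giving them disjoint horizontal bands, so that an element idle during a band stays vertical there and creates no move.

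The step I expect to be the real obstacle is the additivity/coalescing lemma: one has to dock the ``reversal family'' and ``monotone family'' tangles edge to edge --- reconciling their internal schedules so that every intermediate configuration is still a permutation and no spurious corner appears at a block boundary --- and then check that the two diagonals meeting an element at such a boundary can always be aligned. The boundary cases (elements near the ends of an interval, and small $n$) are routine but need to be verified, and I would absorb them into the slack between $4n$ and the $4n-O(1)$ that the argument actually delivers.
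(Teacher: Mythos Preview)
Your plan has a genuine gap at the level of the basic building blocks. The proof of \cref{fish} does \emph{not} rest on ``monotone'' and ``reversal-type'' tangles; its engine is the matrix gadget, a square filled with swaps at all odd locations \emph{except} those on the support of a permutation matrix, and it is precisely these $\Theta(n)$ omitted swaps (the ``holes'') that let a width-$n$ gadget realise $(n/2)!$ different permutations with only $2$ moves per path. A monotone block performs only Grassmannian (or more generally $321$-avoiding) permutations, and any ``reversal-type'' block in which every path turns at most once likewise performs at most $O(c^n)$ permutations; a bounded number of such hole-free blocks gives a tangle with $O(1)$ clusters, and \cref{counting} then forces $\Omega(n\log n)$ corners for most $\pi$, so four such blocks cannot possibly realise an arbitrary permutation with $O(n)$ moves. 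In other words, the very step you flag as ``the real obstacle'' --- docking the blocks together --- is not the obstacle; the obstacle is that your block family is too weak, and no amount of careful docking or coalescing will repair that.

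There is also a more elementary problem with the right-mover/left-mover split: in a tangle every time step is a full permutation of $\{1,\ldots,n\}$, so you cannot move the right-movers through a ``horizontal band'' while the left-movers sit vertical --- a right-mover moving right must swap with its neighbour, who may well be a left-mover, and that neighbour then acquires a move in that band. The paper's actual route to $4n$ is quite different: it combines two splitters, two matrix gadgets, a merger, and a \emph{recursively built} triangular tangle $C$ (\cref{c-gadget}) that itself uses a matrix gadget, a right reflector, and a half-size copy of $C$; the $4n$ bound then comes from summing the per-gadget move counts and subtracting the lengths of the abutting boundaries, with the recursion for $C$ contributing a geometric series $2n+n+n/2+\cdots\le 4(n/2)$. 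The indispensable ingredient you are missing is a gadget with $\Omega(n)$ internal holes.
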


On the other hand, for all sufficiently large $n$ there are permutations that
require at least $3$ moves in some path, and permutations that require at
least $2n-2$ moves in total.  (The latter is easily seen to hold for the
reverse permutation $[n,n-1,\ldots,1]$, while the former apparently requires
a quite involved argument -- see \cref{3-moves}). It is an open problem to
close the gap between the bounds $3$ and $5$ for moves per path, and between
$2n-2$ and $4n$ for total moves.

\begin{figure}
\centering
\begin{minipage}{.49\textwidth}
\includegraphics[width=\textwidth]{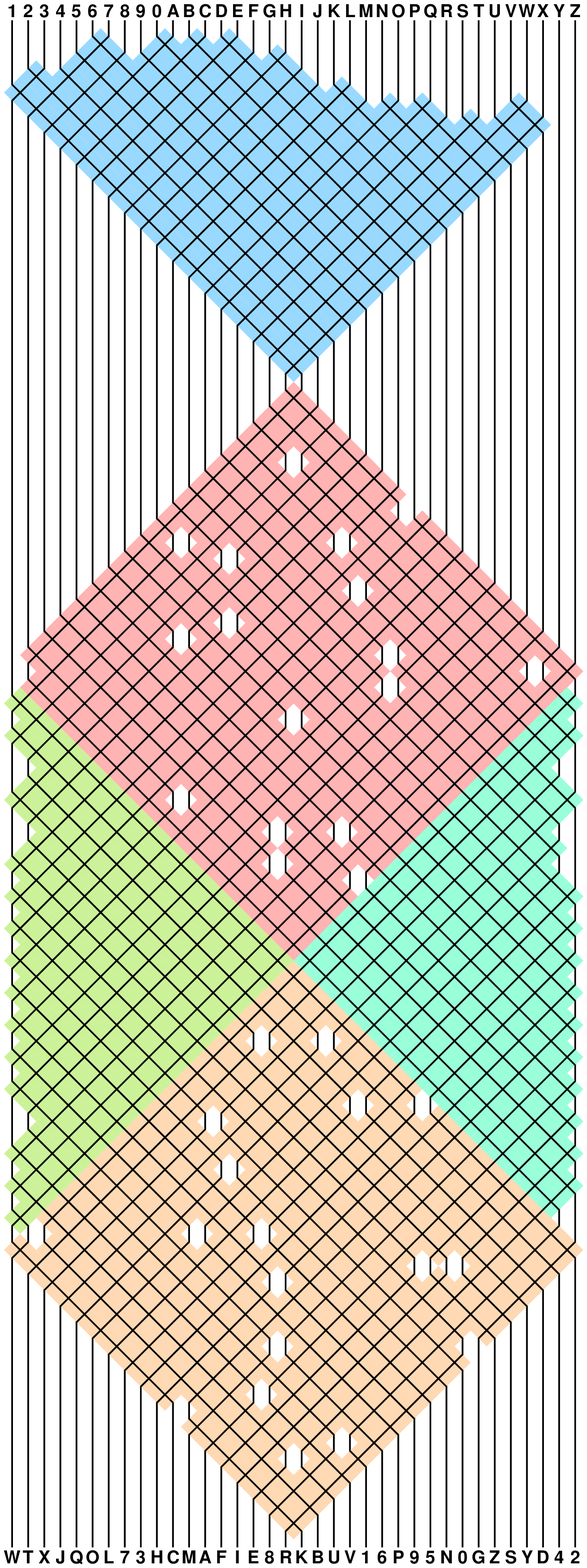}
\subcaption{At most $5$ moves per path (\cref{fish}).}
\label{f:fish}
\end{minipage}
\hfill
\begin{minipage}{.49\textwidth}
{  \includegraphics[width=\textwidth]{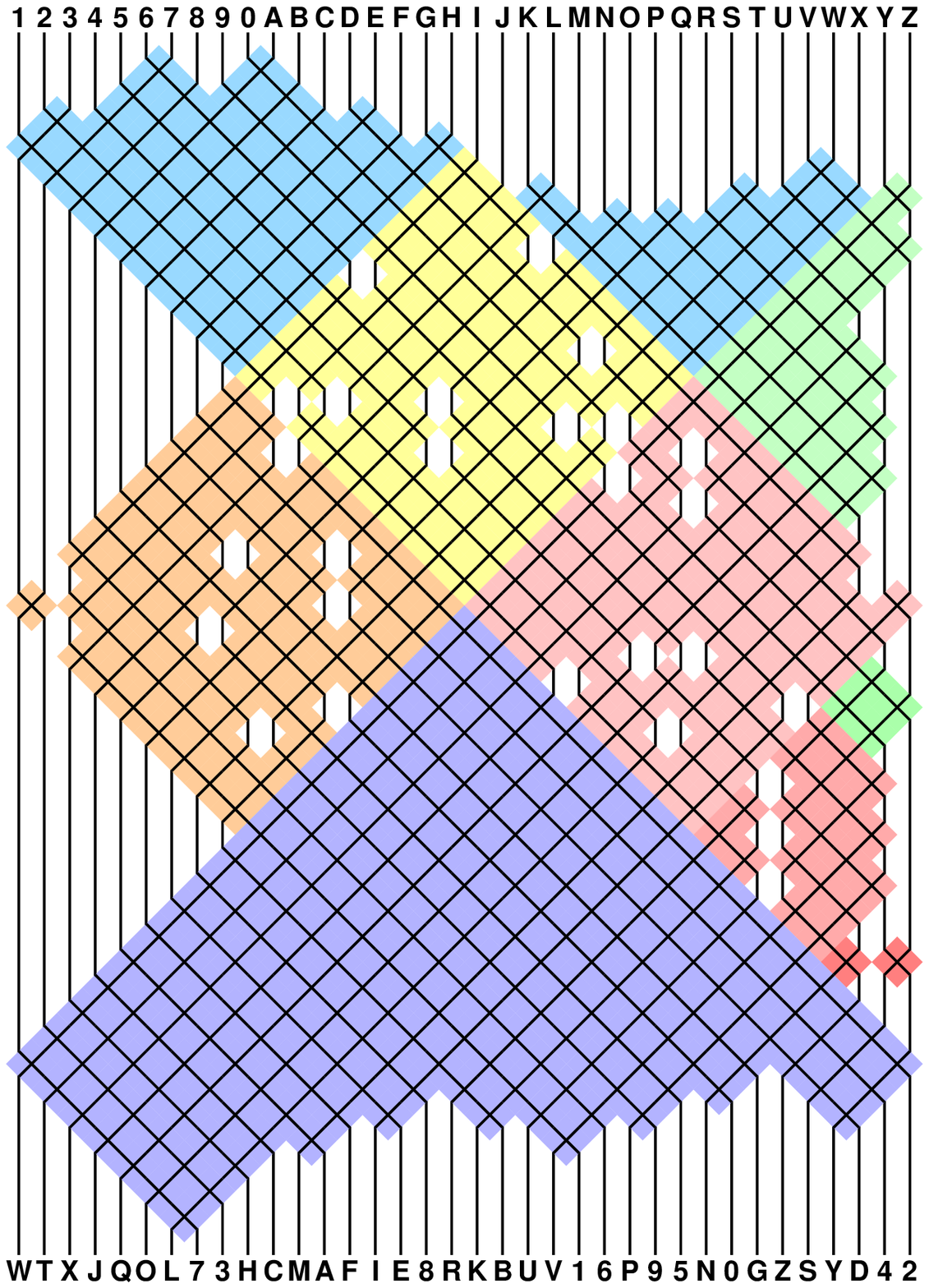}
  \vspace{-.25in}\subcaption{At most $4n$ moves (\cref{8n}).}
  \label{f:8n}}
  \vspace{.1in}
{  \includegraphics[width=\textwidth]{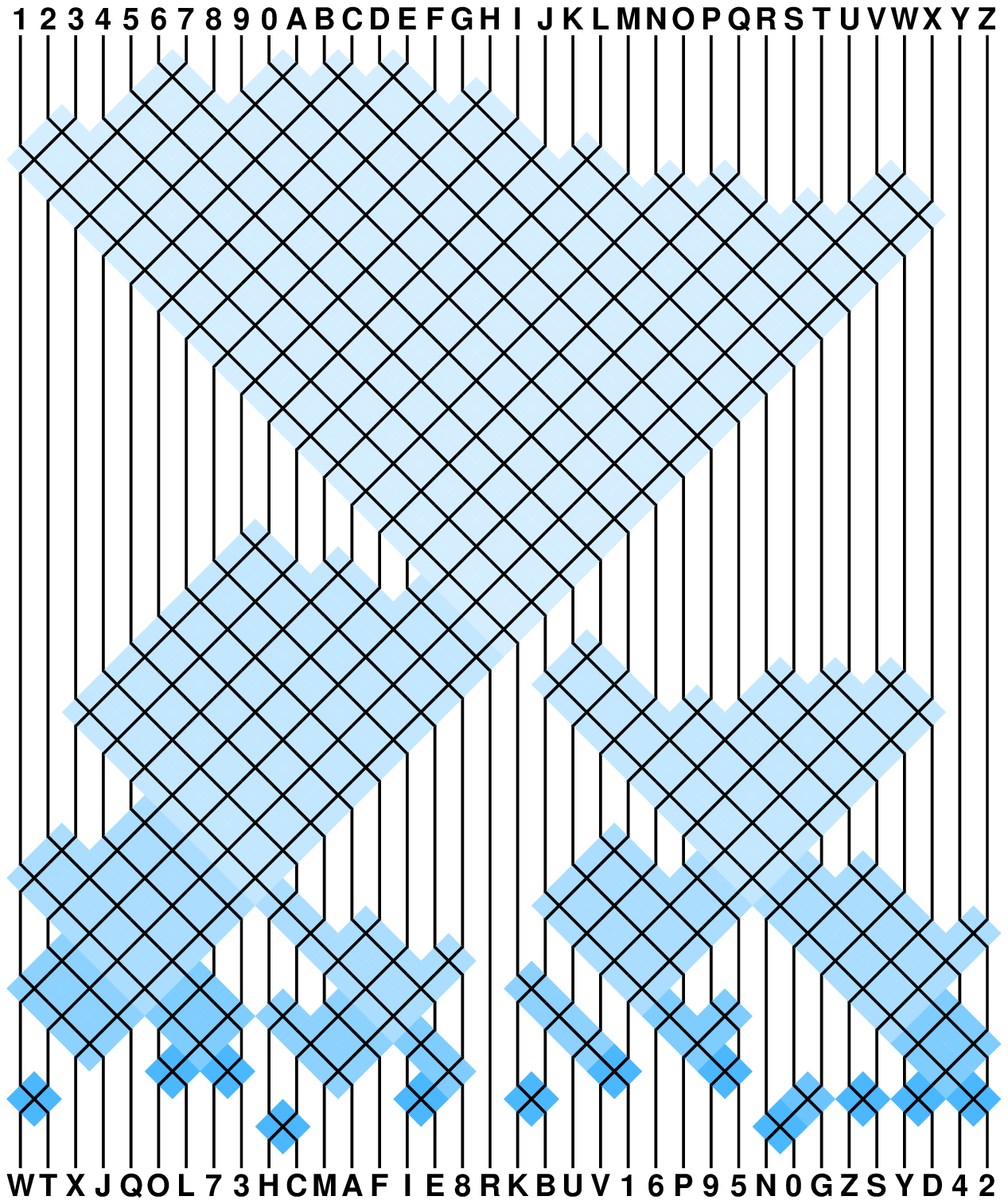}
  \subcaption{Minimum crossings, and
   at most $\lceil\log_2 n\rceil$ moves per path (\cref{log}).}
  \label{f:log}}
\end{minipage}
\caption{Examples of the tangles corresponding to the main results.
Shading is added to illustrate the structure.
}
\label{examples-col}
\end{figure}

\cref{f:fish,f:8n} give examples of the constructions behind
\cref{fish,8n}.  The
tangles will be constructed by combining various
``gadgets'' -- smaller tangles that are capable of
performing permutations in certain restricted classes.
Specifically, we will consider gadgets that perform (and
are in bijective correspondence with) Grassmannian,
$321$-avoiding, $213$-avoiding, and $132$-avoiding
permutations.

Despite the relatively small numbers of moves, the tangles illustrated in
\cref{f:fish,f:8n} arguably have some undesirable features, which we discuss next.
Firstly, they have many ``holes'' -- small internal regions containing no
swaps, shown unshaded in the figures. Secondly, a given pair of paths may
cross multiple times. We will show that some version of the first issue is
unavoidable if the number of moves is to be linear in $n$. On the other hand,
we do not know whether the second issue can be avoided.

Rather than holes, it will be convenient to work with a slightly different
notion, to be defined next.  First we observe that counting moves is essentially
equivalent to counting corners (see also \cite{bhnp}). A \df{corner} is a vertex
of a path, at which its direction changes between any two of the three
possible directions.  Assume that a tangle has its initial and final
permutations repeated at least once, so that each path starts and ends with a
vertical segment.  In addition, count ``double corners'' (at which a path
changes from one non-vertical direction to the other) with multiplicity $2$.
With these conventions, the number of corners in a path equals twice the
number of moves.

In our geometric interpretation of a tangle, we think of the swaps as located
at the elements of the integer lattice $\ZZ^2$.  Therefore, the elements of
the permutations, and thus also the corners, are located at elements of the
shifted lattice $(\ZZ+\tfrac 12)^2$.  Specifically, take the $i$th element
$\pi_t(i)$ of the $t$th permutation $\pi_t$ in the tangle to be located at
the point $(i-\tfrac12,t-\tfrac12)$, where the first coordinate increases
from left to right, and the second coordinate increases from top to bottom.

Given a tangle $T$, consider the graph whose vertices are the corners of $T$,
and with an edge between two corners if their locations are within
$\ell^\infty$-distance $1$. We call the connected components of this graph
\df{clusters}.   (See \cref{f:clusters}.)  The idea is that clusters
generalize the notion of holes discussed above.  Our next result implies
that, as $n\to\infty$, for some (in fact, almost all) permutations, if a
tangle has only $O(n)$ corners (equivalently, $O(n)$ moves) then it must have
at least $\Omega(n)$ clusters. Indeed, $o(n)$ clusters necessitates $\Omega(n
\log n)$ corners. The proof will use a counting argument.

\begin{theorem}\label{counting}
Let $\theta\in(0,\tfrac12)$ and suppose $n>\theta^{-8/\theta}$.  For at
least a proportion $1-e^{-n}$ of the permutations $\pi\in S_n$, any tangle
performing $\pi$ has either at least $(\tfrac12-\theta)n$ clusters or at
least $ \tfrac16 \theta n \log n$ corners.
\end{theorem}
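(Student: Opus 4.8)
The plan is to bound, for each fixed budget of corners $c$ and clusters $k$, the number of permutations $\pi \in S_n$ that can be performed by *some* tangle with at most $c$ corners and at most $k$ clusters, and then show that if $c < \tfrac16 \theta n \log n$ and $k < (\tfrac12-\theta)n$ this count is smaller than $e^{-n} n!$, so that the complementary set has the claimed proportion. The key insight is that a tangle is determined, up to performing the same permutation, by a bounded amount of combinatorial data once we know where its corners are: the permutation $\pi$ can be recovered by reading off which paths cross, and the crossings are confined to the regions near the clusters. So I will set up an *encoding*: from a tangle $T$ performing $\pi$ with $c$ corners organized into $k$ clusters, I extract a description whose length (in bits, or as a count of combinatorial choices) is roughly $O(c \log n)$, from which $\pi$ is reconstructible. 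Counting descriptions then bounds the number of such $\pi$.

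Here is the sequence of steps. First I would make precise the claim that each cluster occupies a bounded-diameter region relative to the number of corners it contains: a cluster with $m$ corners has $\ell^\infty$-diameter at most $2m$ (since consecutive corners in the component graph are within distance $1$), hence fits in a box of side $\le 2m+1$. Second, I would argue that *outside* the union of these boxes every path runs vertically, so the permutation only "changes" inside the boxes; concretely, between two consecutive clusters (in the vertical time-order), the partial permutation is constant. Third — the crux of the encoding — I would show that to specify $\pi$ it suffices to specify, for each cluster, (i) its location, described by $O(\log n)$ bits since there are at most $n$ choices for each coordinate of a reference corner, and (ii) the local behaviour of the tangle inside the cluster's box, which is determined by at most $2^{O(m_j)} \cdot (\text{something polynomial})$ choices where $m_j$ is the number of corners in cluster $j$ — here I use that a side-$(2m_j+1)$ box contains $O(m_j^2)$ lattice cells, each either containing a swap or not, but more carefully that the relevant data (which horizontal "levels" entering the box get permuted to which levels leaving it) is a permutation of at most $2m_j+1$ symbols, giving $(2m_j+1)!$ possibilities. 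Multiplying over the $k$ clusters and using $\sum_j m_j = c$, the total count of encodings is at most $\binom{?}{?}$-type expression bounded by something like $n^{2k} \prod_j (2m_j+1)! \le n^{2k} \cdot (2c)^{c}$ after convexity, actually more carefully $n^{2k}\,c!\,2^c$ or a similar clean bound.

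Finally I would do the arithmetic: take logarithms and compare against $\log(n!) = n\log n - O(n)$. With $k \le (\tfrac12-\theta)n$ the term $2k\log n \le (1-2\theta)n\log n$, and with $c \le \tfrac16\theta n\log n$ the term coming from the per-cluster data is $O(c\log c) = O(\theta n \log n \cdot \log(\theta n\log n)) $, which I need to keep below, say, $\theta n \log n$ — this is where the hypothesis $n > \theta^{-8/\theta}$ is spent, to guarantee $\log c$ is small enough relative to $1/\theta$. Then the total is at most $(1-2\theta)n\log n + \theta n\log n + O(n) = (1-\theta)n\log n + O(n)$, which is below $\log(n!) - n = n\log n - O(n) - n$ once $n$ is large, and the gap of order $\theta n \log n$ absorbs an additive $n$, yielding the bound $e^{-n} n!$ on the number of "bad" permutations. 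The main obstacle I anticipate is Step 3: pinning down the right notion of "local data of a cluster" so that it is simultaneously (a) enough to reconstruct $\pi$ when combined across clusters, and (b) bounded by a clean function of $m_j$ alone (not of $n$, apart from the location). The subtlety is that a single path may pass through several clusters, so the reconstruction must track the *identity* of paths as they travel vertically between clusters; I expect to handle this by showing the paths enter the clusters in their "natural" left-to-right order at each time-slice, so no extra bookkeeping is needed beyond the within-cluster permutation and the cluster locations.
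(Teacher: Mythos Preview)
Your overall strategy---encode each tangle with few corners and few clusters by a short description, then count descriptions and compare with $n!$---is the paper's strategy, but two of your steps contain genuine gaps.

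First, the structural claim in your Step 2 is false: a path need \emph{not} be vertical outside the cluster boxes.  A single diagonal move can be arbitrarily long while contributing only two corners (at its endpoints), and the interior of such a move may lie far from every corner and hence outside every cluster box.  Consequently ``the permutation each cluster performs on the paths through its box'' does not determine $\pi$; you would also need to record, for every path entering or leaving a box, whether it does so vertically or diagonally.  The paper avoids this issue by encoding the \emph{corners} directly rather than the paths: at each corner it records the unordered pair of directions of the two incident segments (at most $6$ choices per corner), and observes that the full multiset of corner locations together with this direction data determines the tangle.  It also first normalizes the tangle by deleting every time-step containing no corner, which bounds the depth by the corner count $C$ and confines all corners to a rectangle of area $Cn$; without some such step your ``location of a cluster'' is not a bounded quantity.

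Second, even if your encoding were valid, the arithmetic in your final paragraph does not close.  A per-cluster cost of $(2m_j+1)!$ is of order $m_j^{m_j}$, so $\log\prod_j (2m_j+1)!$ is of order $c\log c$ (and your claimed bound $\prod_j(2m_j+1)!\le(2c)^c$ already fails when one cluster carries all $c$ corners).  With $c=\tfrac16\theta n\log n$ this gives $\Theta\bigl(\theta n(\log n)^2\bigr)$, which \emph{exceeds} $n\log n$ for all large $n$; the hypothesis $n>\theta^{-8/\theta}$ only pushes $n$ larger and so makes this worse, not better.  What makes the paper's count succeed is that its per-corner cost is a \emph{constant}: a cluster of $m$ corners contributes at most $A^m\cdot 6^m\cdot (Cn)$ possibilities, where $A^m$ (with $A=7^7/6^6$) bounds the number of $\ell^\infty$-connected lattice shapes on $m$ cells via Eden's lattice-animal argument, $6^m$ is the direction data, and $Cn$ is the location.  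Summing over the at most $2^C$ compositions $(m_1,\dots,m_k)$ gives $(Cn)^K(12A)^C$, whose logarithm is $K\log(Cn)+C\log(12A)\le (\tfrac12-\theta)n\log(Cn)+\theta n\log n$ since $\log(12A)<6$.  The hypothesis $n>\theta^{-8/\theta}$ is then spent at the very end, to contradict the residual inequality $\theta\log n\le 2+\tfrac12\log\log n$---that is, it forces $\log n$ to be \emph{large} compared with $1/\theta$, the opposite of what you wrote.
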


We now turn to the second issue raised above.  We call a tangle \df{simple}
if each pair of paths has at most one crossing.  It is again easy to see that
every permutation admits a simple tangle.  In a simple tangle performing a
permutation $\pi$, paths $\pi(i)$ and $\pi(j)$ cross each other if and only
if $(\pi(i),\pi(j))$ is an \df{inversion} of $\pi$, i.e.\ $i<j$ and
$\pi(i)>\pi(j)$.

The article \cite{bhnp} by the current authors characterizes a class of
permutations for which there exist {\em simple} tangles that have the minimum
moves among {\em all} tangles.  However, there exist permutations that
require strictly more moves for a simple tangle than for a general tangle.  Again,
see \cite{bhnp} for details.

In contrast with the case of general tangles discussed earlier, our upper and
lower bounds for numbers of moves in simple tangles are rather far apart:
$O(n\log n)$ and $\Omega(n)$ respectively as $n\to\infty$.  Closing this gap
is our principal open problem.

\begin{prop}\label{log}
For any permutation $\pi\in S_n$, there is a simple tangle performing $\pi$
that has at most $\lceil \log_2 n\rceil$ moves in each path.
\end{prop}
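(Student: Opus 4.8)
The plan is to perform $\pi$ by a merge sort read backwards, assembled from a single merge gadget. First, note that it is enough to produce, for every $\pi$, a simple tangle running from $\pi$ to $\id$ (i.e.\ one that \emph{sorts} $\pi$) with at most $\lceil\log_2 n\rceil$ moves in each path: reflecting a tangle through a horizontal line reverses its sequence of permutations and clearly preserves both the number of moves in each path and the number of crossings between each pair of paths, so such a tangle reflects to a simple tangle performing $\pi$ with the same bound.

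The workhorse is a \emph{merge lemma}: if a contiguous block of positions currently holds the concatenation of two increasing runs -- say positions $1,\dots,p$ hold a set $A$ and positions $p+1,\dots,p+q$ hold its complement $B$, each listed in increasing order -- then there is a tangle supported on that block that sorts it, in which every path makes at most one move and every pair of paths crosses at most once. The displacements are very rigid: in the sorted order the entry in position $i\le p$ moves to position $i+|\{b\in B:b<a\}|$ and the entry in position $p+j$ moves to position $p+j-|\{a\in A:a>b\}|$, so each $A$-path moves weakly right, each $B$-path moves weakly left, the two groups each retain their internal order, and an $A$-path and a $B$-path cross exactly when the $A$-path carries the larger value -- hence at most once. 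What remains is to realize these monotone displacements by an honest tangle in which, for every path, the swaps it takes part in occur in consecutive time steps (so they add up to a single move); this can be arranged by a wave of swaps sweeping outward from the boundary between the two runs, and it is the one step that needs genuine care. The merge-sort wrapper around it, and the simplicity bookkeeping below, are routine.

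Granting the merge lemma, run merge sort on the set of positions: with $m=\lceil n/2\rceil$, recursively sort the sub-arrays occupying positions $1,\dots,m$ and $m+1,\dots,n$, then apply the merge lemma to the whole block (legitimately, since the two halves are now each increasing). Unrolling the recursion organizes the tangle into at most $\lceil\log_2 n\rceil$ levels; the merges within a single level act on pairwise disjoint contiguous blocks, so we run them in parallel, padding the shorter ones with repeated permutations (vertical segments, which create no moves) so that the level occupies one horizontal band. Each path lies in at most one merge per level, so it makes at most $\lceil\log_2 n\rceil$ moves in all. For simplicity, consider two paths $a$ and $b$ and let $v$ be the least common ancestor, in the recursion tree, of the two singleton leaves; in every merge other than the one at $v$ the two paths either do not both appear, or appear in disjoint blocks, or appear together inside one of the two already-sorted sides being merged (and no two paths ever swap within a single side). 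Hence every crossing of $a$ and $b$ occurs in the single merge at $v$, where the merge lemma permits at most one, so the tangle is simple. This completes the proof modulo the merge lemma, which is where the remaining work lies.
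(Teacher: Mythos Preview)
Your approach is essentially identical to the paper's: the paper applies a \emph{splitter} gadget (performing a Grassmannian permutation, one move per path) at the top and then recurses on the two halves, which is precisely your merge-sort construction read in the opposite time direction---your reflection step makes the two tangles literally the same. The ``merge lemma'' you defer is exactly the paper's merger gadget (Lemma~\ref{merger}, the reflection of the splitter of Lemma~\ref{splitter}), whose explicit construction supplies the step you left open; your LCA simplicity argument is also the same as the paper's, phrased slightly differently.
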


\begin{prop}
\label{lower} For every $n\geq 1$,  there is a permutation $\pi \in S_n$ such
that any simple tangle that performs it has at least $3n - c\sqrt {n}$ moves,
where $c>0$ is an absolute constant.
\end{prop}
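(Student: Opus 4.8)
The plan is to take $\pi$ to be the reverse permutation $\rev=[n,n-1,\dots,1]$ (or, should that turn out not to be quite extremal, a modification of it changing $O(\sqrt n)$ entries), and to combine an exact ``displacement invariant'' with a congestion argument. The first ingredient, valid for \emph{any} simple tangle performing \emph{any} $\sigma\in S_n$, is that the rightward moves of path $i$ have lengths summing to $\rho_i:=\#\{j>i:\sigma^{-1}(j)<\sigma^{-1}(i)\}$ and the leftward moves have lengths summing to $\lambda_i:=\#\{j<i:\sigma^{-1}(j)>\sigma^{-1}(i)\}$. Indeed, in a simple tangle paths $i$ and $j$ cross exactly once when $\{i,j\}$ is an inversion of $\sigma$ and never otherwise; a rightward swap of path $i$ is precisely an event in which $i$ overtakes another path, passing from its left to its right, and in a simple tangle this occurs once for each $j>i$ with $\sigma^{-1}(j)<\sigma^{-1}(i)$ and for no other $j$ (symmetrically for leftward swaps). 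Hence path $i$ makes at least one rightward move whenever $\rho_i>0$ and at least one leftward move whenever $\lambda_i>0$. For $\sigma=\rev$ we have $\rho_i=n-i$ and $\lambda_i=i-1$, so each of the $n-2$ interior paths already makes at least two moves, giving a total of at least $2n-2$.

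To gain the remaining $\approx n$ moves, I would exploit congestion caused by ``block overtaking''. Suppose an interior path $i$ makes only two moves; by the invariant these are forced to be a single rightward run of length $n-i$ sweeping past the entire block $\{i+1,\dots,n\}$, together with a single leftward run of length $i-1$ sweeping past $\{1,\dots,i-1\}$. Every path in $\{i+1,\dots,n\}$ is displaced one column to the left when path $i$ overtakes it during that rightward run. For an interior path $j$ (one with both $\rho_j,\lambda_j>0$), this imposed leftward step is sandwiched between $j$'s own rightward work (it too must eventually overtake a block to its right) and its leftward work (it must overtake a block to its left), and unless the step can be merged into one contiguous leftward run of $j$, path $j$ is forced to make two leftward moves and hence at least three moves in total. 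The substance of the argument is to show that these imposed displacements cannot \emph{all} be merged away: running over all paths and tracking mergeability, one should be able to conclude that all but $O(\sqrt n)$ of the paths make at least three moves, for a total of at least $3(n-O(\sqrt n))-O(1)=3n-c\sqrt n$.

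The main obstacle is precisely this mergeability bookkeeping. A single path \emph{can} absorb one imposed displacement into an adjacent run, so no path is individually pinned to three moves; the argument must be global. I expect to handle it by amortization: each path either exhibits an extra move of its own or passes a unit of ``debt'' to the particular neighbour whose sweep displaced it, and one shows that a path can carry only boundedly much debt before it, in turn, must spend an extra move. The paths one must allow to escape the count -- those near columns $1$ and $n$, where $\rho_i$ or $\lambda_i$ degenerates, together with a thin band elsewhere needed to make the charging balance -- number $O(\sqrt n)$, and this is where the $\sqrt n$ (rather than $O(1)$) loss enters; if $\rev$ itself is not extremal, perturbing $O(\sqrt n)$ of its entries so as to remove the degeneracies should let the same scheme go through.
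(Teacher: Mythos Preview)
Your proposal has a fundamental gap: the reverse permutation $\rev=[n,n-1,\dots,1]$ admits a simple tangle with exactly $2n-2$ moves, not $3n-O(\sqrt n)$. Indeed, $\rev$ is $132$-avoiding, and by \cref{r-reflector} it is performed by a right reflector, which is simple and in which every path has at most two moves (one R-move followed by one L-move). Concretely, paths $2,\dots,n-1$ each make exactly two moves and paths $1,n$ make one, for a total of $2n-2$. This explicit tangle shows that your congestion/amortization scheme cannot succeed for $\rev$: all the ``imposed leftward displacements'' you describe \emph{do} merge into single contiguous runs. Your escape clause of ``changing $O(\sqrt n)$ entries'' cannot repair this, since the deficit from $3n$ is of order $n$, not $\sqrt n$.

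The paper uses a genuinely different permutation. Writing $n=r^2+2$, it takes
\[
\pi=\bigl[n,\;\underbrace{r{+}1,\dots,2},\;\underbrace{2r{+}1,\dots,r{+}2},\;\dots,\;\underbrace{n{-}1,\dots,n{-}r},\;1\bigr],
\]
i.e.\ $r$ reversed blocks of length $r$ sandwiched between $n$ and $1$. The point is that elements within a block are mutually inverted while elements of different blocks are not, so in a simple tangle paths from different blocks never cross. The argument then focuses on the unique crossing of paths $1$ and $n$: only elements within distance $r$ of that crossing (hence in at most three blocks) can escape a case analysis that forces the remaining blocks to be ``terrible'' (all but one path making at least three moves). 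The $\sqrt n$ loss comes from the three exceptional blocks and the block structure, not from boundary degeneracies of $\rev$. Your displacement invariant is correct and is the easy $2n-2$ part; the extra $n$ comes from an entirely different mechanism that $\rev$ simply does not exhibit.
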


While our focus is on moves, one can attempt to optimize other aspects of a
tangle.  For instance, we may define the \df{depth} of a tangle to be the
length of the sequence of permutations comprising it (including the final
permutation but not the initial one, say).  It is not difficult to check that
any $\pi\in S_n$ can be performed by some tangle of depth at most $n-1$ for
even $n$ and at most $n$ for odd $n$ (and these bounds are optimal; they are
attained by the reverse permutation). Our constructions for
\cref{fish,8n,log} perform reasonably well in this regard, having depths at
most $3n$, $7n/4$ and $3n/2$ respectively.

\subsection*{Background}

Further material on tangles and moves appears in a companion paper
\cite{bhnp} by the current authors.   The main result of \cite{bhnp} is a
surprisingly complex characterization of the set of permutations that can be
performed by a simple tangle in which each path has at most one move in each
direction, together with a polynomial-time algorithm for recognizing such a
permutation and constructing the tangle. (In particular, this set turns out
to include every permutation in $S_6$, but no permutation containing the
pattern $7324651$.) Tangles and related objects have been studied in several
settings by other authors, although the problem of minimizing moves (or
corners) does not appear to have been considered prior to \cite{bhnp}.

Wang in~\cite{w-nrsil-91} considered essentially the same
notion in the context of VLSI design for integrated
circuits. However, the research in~\cite{w-nrsil-91}
targets, in our terminology, the depth of a tangle, and the
total length of the paths. The algorithm suggested by Wang
produces tangles with $O(n^2)$ moves for some permutations.

In algebraic combinatorics, Schubert polynomials can be encoded as sums over
diagrams called RC-graphs or pipe dreams \cite{pipe1,pipe2}, which may be
interpreted as tangles of a certain type.  Specifically, an RC-diagram
corresponds via a $45^\circ$ rotation to a simple tangle whose swaps are
restricted to odd locations in a triangular region (the same region as our
``reflector gadget'' in \cref{ss:reflector}). Reduced words for permutations
are extensively studied; see e.g.\ \cite{knuth3,Angel07,young,saga}.  In our
terminology, a reduced word is a simple tangle with only one swap between
consecutive permutations.

\sloppypar Decomposition of permutations into nearest-neighbour
transpositions was considered in the context of permuting machines and
pattern-restricted classes of permutations~\cite{Albert07}. In our
terminology, Albert~et.~al.~\cite{Albert07} proved that it is possible to
check in polynomial time whether for a given permutation there exists a
tangle of depth $k$, for a given $k$.  Tangles and the associated
visualizations also appear in sorting networks \cite{aks,knuth3}, in
arrangements of pseudolines~\cite{Felsner96}, and in the context of change
ringing (English-style church bell ringing) \cite{white}.  In the terminology
of change ringing, a tangle with minimum corners is a ``link method with
minimum changes of direction''; each permutation represents an order of
ringing the bells, and a corner requires a ringer to change the speed of
their bell, which involves extra physical effort.  Also related is the
problem of decomposing a permutation into the minimum number of block
transpositions -- see \cite{bridge-hand}.

Tangles appear naturally as a sub-problem in the context of graph-drawing,
and this was our original motivation for the problems considered here.  In
order to simplify a visualization of a large graph, it is sometimes
advantageous to ``bundle'' sets of nearby edges together
\cite{sergey10,pnbh-ifmpg-11}.  Since the edges may be required to appear in
different orders at the two ends of a bundle, they must be permuted along its
length, and it is desirable to do this in a helpful and visually appealing
way. Paths with few moves (or few corners) tend to be easy to follow.

With practical applications in mind, it is worth noting that the tangles
resulting from our constructions can often be improved slightly by local
modifications. For example, in \cref{f:fish}, one may eliminate the two swaps
where the tail and body of the ``fish'' meet, reducing the depth; in
\cref{f:8n}, the isolated swap in the middle of the leftmost column may be
moved upward to meet the swaps at the top, eliminating a move. Such
modifications may be iterated, but will not improve the worst case asymptotic
performance of the constructions.

\begin{figure}
\centering
\begin{minipage}{.45\textwidth}
  \includegraphics[width=\textwidth]{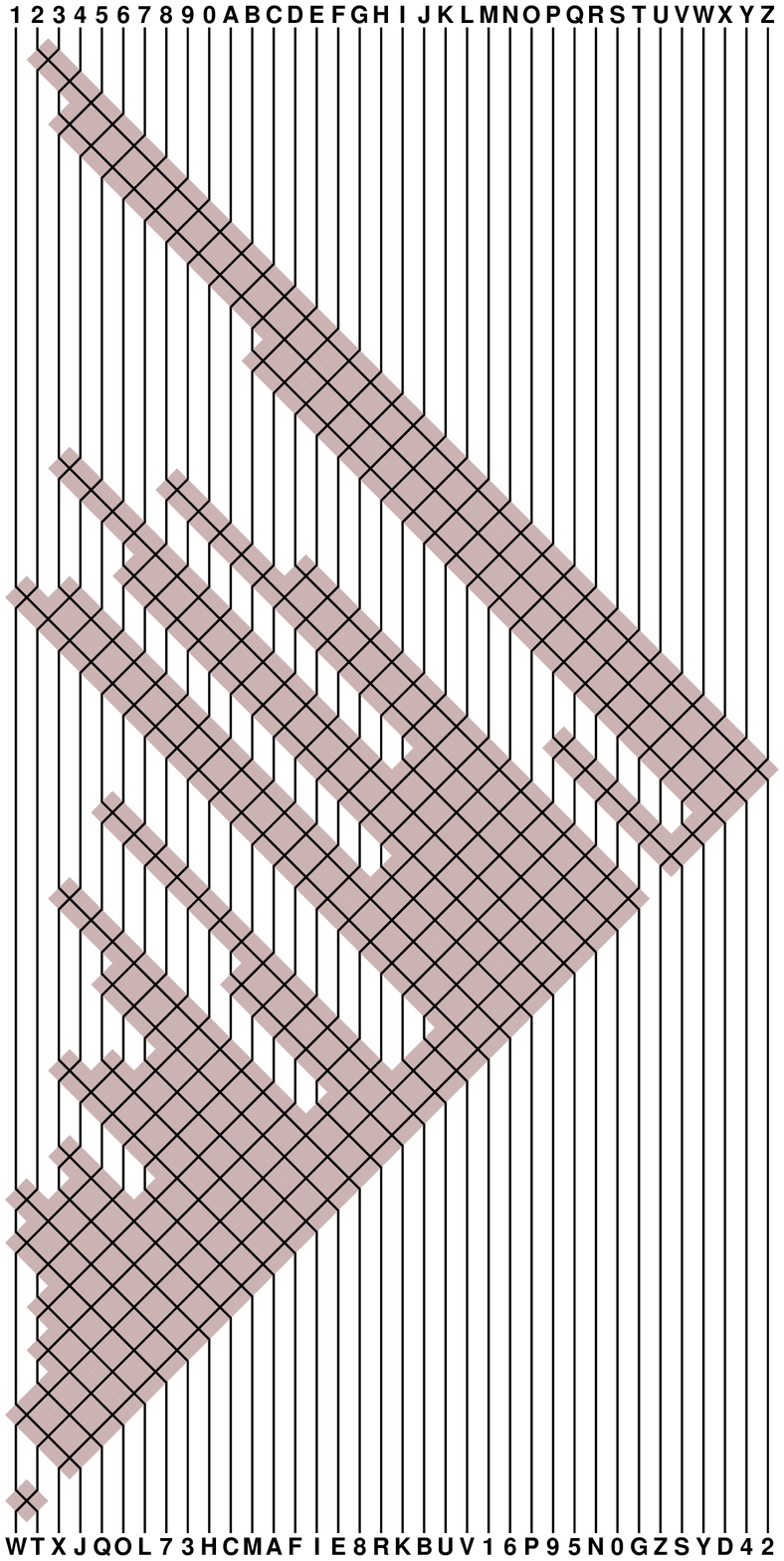}
  \subcaption{Bubble sort variant: use one R-move to route each path to its correct position, starting from the rightmost, $\pi(n)$.  Path $i$ may have $\Omega(i)$ L-moves.}
  \label{bubble}
\end{minipage}
\hfill
\begin{minipage}{.45\textwidth}
\vfill\includegraphics[width=\textwidth]{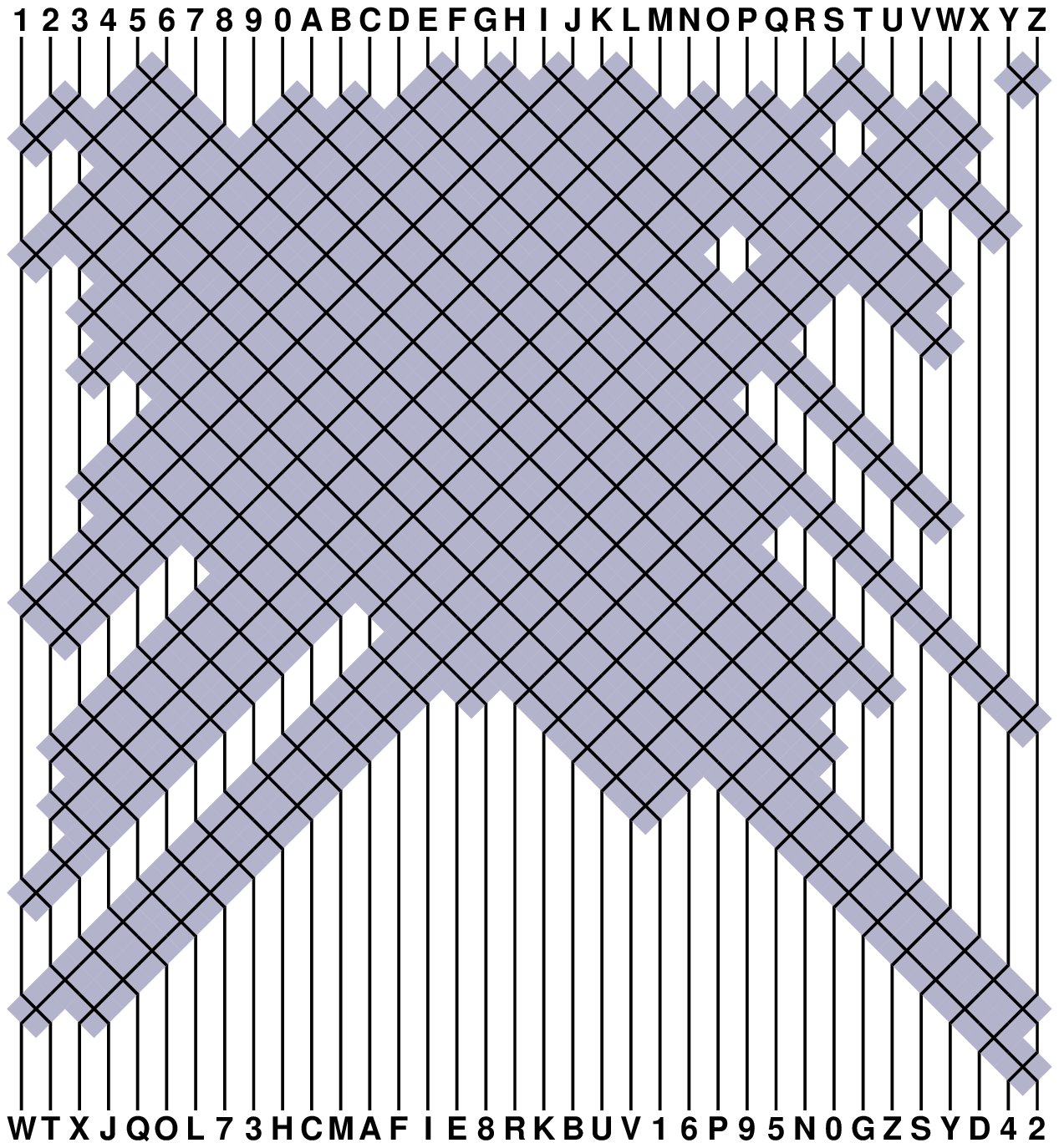}\vfill
  \subcaption{Odd-even sort: at alternate steps, apply swaps in all odd positions, or all even positions, wherever the two elements form an inversion.}
\end{minipage}
\caption{Tangles constructed according to two natural greedy algorithms.
Both require $\Omega(n^2)$ moves in the worst case as $n\to\infty$.}
\label{greedy}
\end{figure}

\subsection*{Further notation and conventions}

As mentioned above, it is convenient to consider a tangle in terms of its
swaps, and we think of the swaps as located at elements of the integer
lattice $\ZZ^2$.  If $\pi_t,\pi_{t+1}\in S_n$ are two consecutive
permutations in a tangle, and they are related by non-overlapping swaps thus:
$\pi_t\cdot s(p_1)\cdots s(p_k)=\pi_{t+1}$, then we say that the tangle has
swaps at \df{locations} $(p_1,t),\ldots,(p_k,t)$.  The first coordinate is
sometimes called position, and increases from left (West) to right (East)
(from $1$ to $n-1$); the second coordinate is called time, and increases from
top (North) to bottom (South).  If a tangle consists of permutations in $S_n$
then we sometimes call $n$ the \df{width} of the tangle.

We identify two tangles if they have the same set of swap locations; thus, we
consider the tangle with permutations $\pi_1,\ldots,\pi_t$ to be the same as
that with permutations $\gamma\cdot\pi_1,\ldots,\gamma\cdot\pi_t$, for any
permutation $\gamma$.  In particular, a tangle that performs a permutation
$\pi$ may be equivalently be considered as starting at $\pi^{-1}$ and ending
at $\id$, thus ``sorting'' $\pi^{-1}$.  The latter convention was adopted in
\cite{bhnp}.  It will also be useful to allow times of swaps to take {\em
any} value in $\ZZ$, and to identify two tangles if one is obtained from the
other by adding a constant to all swap times (thus translating it
vertically).

As mentioned earlier, we will construct tangles by
combining smaller tangles (called gadgets), and for this it
will be useful to translate horizontally as well as
vertically.  Thus, let $m<n$ and suppose that $T$ is a
tangle performing $\pi\in S_m$, with its swaps at locations
$S\subset[1,m-1]\times \ZZ$.  Then for integers $a,b$, we
may form a tangle $T' $of size $n$ by placing swaps at the
translated locations $S':=\{(i+a,t+b):(i,t)\in S\}$; this
performs the permutation
$[1,\ldots,a,\pi(a+1),\ldots,\pi(a+m),a+m+1,\ldots,n]$.
Moreover, we may combine several tangles by taking the
union of their sets of swap locations (perhaps after
applying various translations).

A swap location $(x,t)$ is called \df{even} or \df{odd} according to whether
$x+t$ is even or odd.  All the tangles we construct will have their swaps
restricted to locations of one parity.  As indicated above, a convenient way
to highlight the structure of such a
tangle is to draw a shaded $45^\circ$-rotated square centered
at each swap, as in \cref{shading}.  Recall that a move is a maximal
non-vertical segment of a path. We call it an \df{L-move} if it runs in the
North-East to South-West direction, and an \df{R-move} if it runs North-West
to South-East.

Pattern-avoiding permutations will play a key role. (See e.g.~\cite{pattern}
for background.) A \df{pattern} is a permutation $p\in S_m$.  For $n\geq m$,
we say that a permutation $\pi\in S_n$ (or, more generally, a sequence of $n$
distinct real numbers $\pi$) \df{contains} the pattern $p$ if there exist
indices $1\leq i_1<\cdots<i_m\leq n$ such that, for all $1\leq j<k\leq m$, we
have $\pi(i_j)<\pi(i_k)$ if and only if $p(j)<p(k)$.  If $\pi$ does not
contain $p$ then $\pi$ is said to be $p$\df{-avoiding}.

The following concept will also be useful.  For positive integers
$a_1,\ldots,a_k$ with sum $n$, consider the partition of $[1,n]$ into the
intervals $[1,a_1],[a_1+1,a_1+a_2],\ldots,[n-a_k+1,n]$ with these lengths. We
say that a permutation $\pi\in S_n$ is $(a_1,\ldots,a_k)$\df{-split} if it
maps each of these intervals to itself.

\subsection*{Organization of the paper}

In \cref{s:gadgets} below we introduce the gadgets that will be used in our
constructions, and prove their required properties.  \cref{log} and
\cref{fish,8n} are then proved in \cref{s:log,s:fish,s:8n} respectively.  We
prove the bound \cref{counting} in \cref{s:clusters}, via a combinatorial
argument. In contrast, the lower bound in \cref{lower} and the fact that some
permutations require $3$ moves in some path (\cref{3-moves}) are proved by
explicitly exhibiting suitable permutations, in
\cref{s:simple-lower,s:path-lower} respectively.  Although the permutations
in question are very easy to describe, the proofs of both results are
surprisingly delicate.

\section{Gadgets}
\label{s:gadgets}

In this section we introduce the gadgets that will be used to prove
\cref{fish,8n}.  They come in three main categories, with several variants in
each.

\subsection{Splitter and Merger}

Our first gadget comes in two variant forms, which are
reflections of each other about a horizontal axis. A
\df{splitter} gadget has swaps at locations
$$(i-j+a,-i-j),$$
for all $j \geq 0$ and $0\leq i \leq b(j)$, where $a$ is an even integer, and
$b$ is a non-decreasing integer-valued function of bounded support. Thus, a
splitter consists of swaps at all even locations in a region bounded below by
two line segments running South-East and North-East, and bounded above by an
interface comprising any sequence of North-East and South-East segments. See
\cref{f:splitter} for an example. The idea is that it separates the paths
into two arbitrary sets, and places them on the left and right sides while
maintaining the relative order within each set.
\begin{figure}
\centering
\begin{subfigure}{0.35\textwidth}\centering
\includegraphics[width=\textwidth]{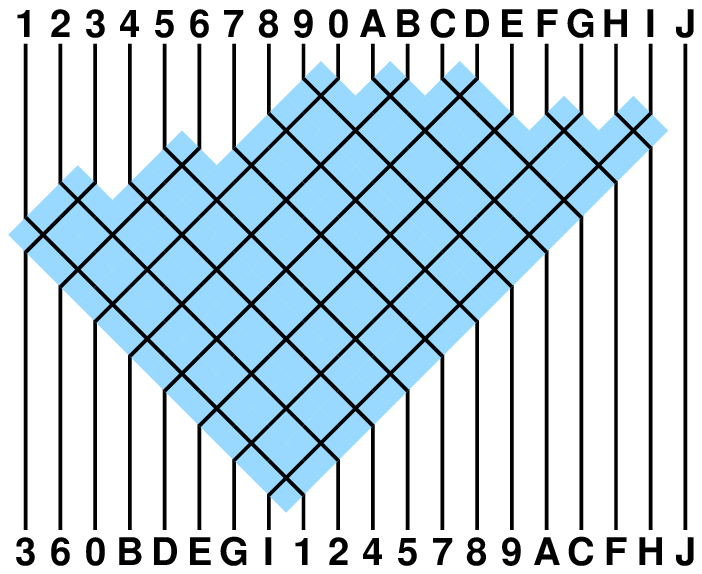}
\subcaption{Splitter gadget.}
\label{f:splitter}
\end{subfigure}\hspace{.1\textwidth}
\begin{subfigure}{0.35\textwidth}\centering
\includegraphics[width=\textwidth]{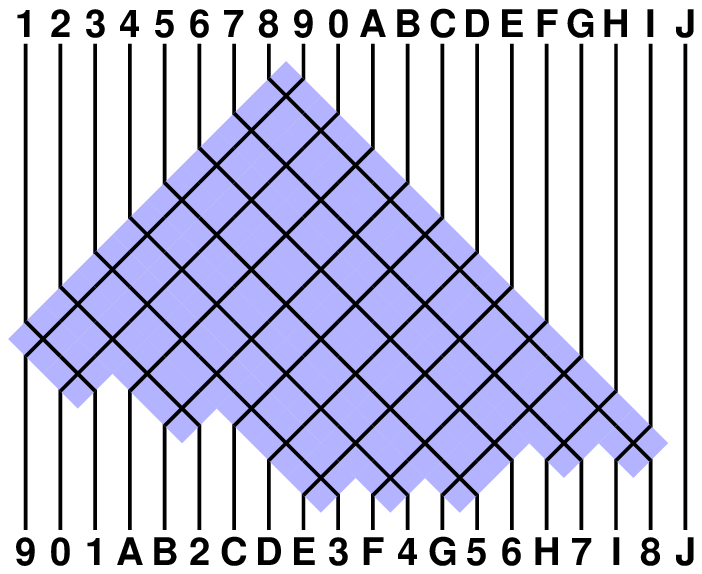}
\subcaption{Merger gadget.}
\label{f:merger}
\end{subfigure}
\caption{}
\end{figure}

To formalize this: a permutation $\pi=[\pi(1),\ldots,\pi(n)]$ is called
\df{Grassmannian} if it has at most one descent, i.e.\ at most one index $k$
such that $\pi(k)>\pi(k+1)$.

\begin{lemma}\label{splitter}
A permutation can be performed by some splitter if and only if it is
Grassmannian.  Furthermore, the correspondence between splitters and
Grassmannian permutations is bijective.
\end{lemma}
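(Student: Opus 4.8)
The plan is to determine exactly which permutation a given splitter performs by following its paths one at a time, conclude that this permutation must be Grassmannian, and then invert the correspondence. So first I would analyse the passage of a single path through a splitter. View the swap region as a union of maximal North-East runs $D_0,D_1,\dots,D_J$ of swaps, where $D_j$ consists of the swaps at the locations $(a-j+i,-i-j)$ for $0\le i\le b(j)$; by the monotonicity of $b$ these runs form a staircase, with a single lowest swap --- the apex --- at $(a,0)$, and the lower ends $(a-j,-j)$ of the $D_j$ lined up along one of the region's two bounding arms. Reading a path from North to South, I claim that as soon as it meets a swap it is deflected in exactly one of two ways: either it is pushed one step West, whereupon it is carried the whole length of that run $D_j$ and leaves the region at the lower end $(a-j,-j)$, having made a single L-move; or it is pushed one step East, whereupon it cascades South-East through the runs $D_j,D_{j-1},\dots,D_0$ in turn and leaves the region having made a single R-move; thereafter it meets no further swap. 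Paths that start strictly to the left or to the right of the region pass straight through. I would establish this by induction on time, checking at each step that the only swap the displaced path can next meet is the predicted one; the single-parity restriction on swap locations is exactly what rules out the near-miss swap on a neighbouring run that would otherwise divert it.

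Granting this, the performed permutation $\pi$ is Grassmannian: the paths making an L-move, together with those starting to the left of the region, occupy --- in their original relative order --- the positions $1,\dots,a$, and the remaining paths occupy $a+1,\dots,n$ in their original relative order, so $\pi$ has at most one descent, located at the apex position $a$. The analysis also pins down, from the shape $b$, which values form the left block $A=\{\pi(1)<\dots<\pi(a)\}$: the left-starting paths contribute $1,\dots,a-J-1$, and the run $D_j$ contributes exactly one further value, namely $a-j+b(j)+1$. In particular distinct admissible shapes $b$ (with the same apex $a$ and width $n$) give distinct sets $A$, so the map ``splitter $\mapsto$ performed permutation'' is a well-defined injection into the set of Grassmannian permutations of $S_n$, the empty splitter mapping to $\id$.

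For surjectivity I would run this backwards. Given a Grassmannian $\pi\ne\id$, with its unique descent at position $k$ and left block $A$, I set $a=k$ and recover first $J$ and then the values $b(j)$ from the arrangement of $A$ inside $[n]$; the monotonicity requirement forces a unique admissible shape $b$ (a short combinatorial check), the traversal analysis shows that the resulting splitter performs exactly $\pi$, and it is the only one doing so, since any splitter performing $\pi$ has its apex at the forced descent position and then has its shape determined by $A$. Together with the empty splitter for $\id$, this yields the stated bijection.

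The step I expect to be the real work is the trajectory analysis in the first paragraph. Because consecutive runs $D_j$ overlap in a staircase pattern, a path being carried along one of them passes immediately alongside swaps of its neighbours, and one must verify carefully --- this is where the single-parity condition is used --- that it is never actually pulled off course; marshalling this, together with the correct treatment of the paths at the extreme left and right of the region and of the degenerate identity case, is the delicate part of the argument. Once it is in place, the Grassmannian conclusion and the inverse construction follow with only routine checking.
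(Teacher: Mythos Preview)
Your approach is essentially the same as the paper's: both identify the permutation performed by a splitter by tracking individual paths (each making a single L-move or a single R-move), conclude that the result is Grassmannian, and then invert the correspondence. You are more explicit about the path-by-path trajectory analysis via the diagonal runs $D_j$, whereas the paper gives the explicit formula $a=k$, $b(i)=\pi(k-i)-(k-i)$ and describes the upper and lower boundaries with reference to the figure; but the underlying content is the same.
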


For the purposes of the claimed bijectivity, recall that
two tangles are identified if they have the same set of
swap locations.

\begin{proof}[Proof of \cref{splitter}]
The identity permutation is clearly performed by the
trivial tangle containing no swaps.  Any other Grassmannian
permutation $\pi$ has exactly one descent; say
$\pi(k)>\pi(k+1)$.  We take $a=k$, and
$b(i)=\pi(k-i)-(k-i)$ for $0\leq i \leq k-1$, and $b(i)=0$
for $i\geq k$.  The function $b$ is easily seen to be
non-decreasing.  The lower boundary of the splitter
consists of $k$ steps South-East followed by $n-k$ steps
North-East. The upper boundary also consists of $k$
South-East steps and $n-k$ North-East steps, with the
$\pi(i)$th step being South-East if and only if $i\leq k$.
For $i\leq k$, path $\pi(i)$ makes one L-move, starting at
position $i$ and ending at position $\pi(i)$. For $i> k$,
path $\pi(i)$ similarly makes one R-move. The paths
$\pi(i)$ for $i\leq k$ maintain their order relative to
each other, as do those for $i> k$. See \cref{f:splitter}.  By
similar reasoning, any splitter performs a Grassmannian
permutation.  Since different splitters perform different
permutations, the correspondence is bijective.
\end{proof}

A \df{merger} is obtained by reflecting a splitter about a horizontal axis.
Thus it has swaps at all locations
$$(i-j+a,i+j),$$
for $a$ and $b(\cdot)$ as above.  See \cref{f:merger}.  The corresponding
permutation is the inverse of that performed by the splitter.  A permutation
$\pi$ is the inverse of a Grassmannian permutation if and only if, for some
$k$, the values $1,\ldots,k$ appear in increasing order in the sequence
$\pi=[\pi(1),\ldots,\pi(n)]$, and so do $k+1,\ldots,n$.  The proof of the
following lemma is immediate.

\begin{lemma}\label{merger}
A permutation can be performed by some merger if and only if its inverse is
Grassmannian. Furthermore, this correspondence is bijective.
\end{lemma}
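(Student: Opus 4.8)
The plan is to obtain \cref{merger} directly from \cref{splitter} by exploiting the horizontal-reflection symmetry, so that essentially no new work is required. A merger is, by definition, the image of a splitter under reflection about a horizontal axis. Under such a reflection the roles of ``time runs downward'' and ``time runs upward'' are interchanged, so if a splitter $T$ performs $\pi$ (viewed as going from $\id$ at the top to $\pi$ at the bottom), then the reflected tangle $T'$, read in the standard top-to-bottom direction, performs the inverse permutation $\pi^{-1}$. This is a general fact: reversing the sequence of permutations in any tangle that performs $\pi$ yields a tangle performing $\pi^{-1}$, because adjacency is a symmetric relation and the product of the transpositions is inverted. Combining this with the fact (recorded earlier, in the discussion preceding \cref{merger}) that the set of inverses of Grassmannian permutations is exactly the set of $\pi$ for which, for some $k$, both $1,\dots,k$ and $k+1,\dots,n$ occur in increasing order in $[\pi(1),\dots,\pi(n)]$, gives the characterization claimed.

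Concretely, the steps are: first, verify the reflection correspondence at the level of swap locations -- the formula $(i-j+a,\,i+j)$ for a merger is obtained from the splitter formula $(i-j+a,\,-i-j)$ by negating the time coordinate, which is precisely reflection about the horizontal axis through time $0$; so mergers are in bijective correspondence with splitters via this reflection. Second, observe that reflecting a tangle about a horizontal axis (equivalently, reversing its sequence of permutations) sends the permutation it performs to the inverse permutation. Third, chain these two bijections with the bijection of \cref{splitter} between splitters and Grassmannian permutations, and with the inversion bijection on $S_n$ (which is its own inverse); the composition is a bijection between mergers and inverses of Grassmannian permutations. Finally, translate ``inverse of Grassmannian'' into the stated elementary description of the image set, using that $\pi$ is Grassmannian iff $\pi$ has at most one descent, and that having at most one descent is equivalent, after inverting, to the two increasing-run condition.

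Since every ingredient is either already proved (\cref{splitter}) or a one-line symmetry observation, there is no real obstacle; the only point requiring a modicum of care is bookkeeping the effect of the horizontal reflection on the direction of time and hence on ``which permutation is performed,'' and checking that the reflection indeed carries the splitter region (bounded below by two segments and above by an arbitrary North-East/South-East interface) to the merger region (the mirror image, bounded above by two segments and below by an arbitrary interface), with even locations mapping to even locations. This is exactly the content the lemma asserts, and the paper has already signalled that ``the proof is immediate.''
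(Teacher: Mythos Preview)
Your proposal is correct and matches the paper's intended argument: the paper defines a merger as the horizontal reflection of a splitter, notes that the corresponding permutation is the inverse of that performed by the splitter, and then declares the proof ``immediate.'' You have simply spelled out the symmetry and bijection bookkeeping that the paper leaves implicit.
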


\begin{figure}
\centering
\includegraphics[width=0.35\textwidth]{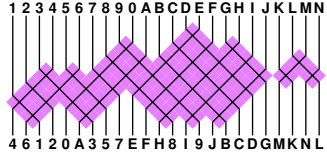}
\caption{A direct tangle, performing a $321$-avoiding permutation.}
\label{f:321}
\end{figure}

Both splitters and mergers are special cases of a more general class of
tangles considered in \cite{bhnp}, called {\em direct} tangles.  A direct
tangle is one in which each path has at most one move.  Modulo a suitable
convention regarding split permutations (in which the parts of the tangle
corresponding to different splitting intervals may be translated vertically),
such a tangle consists of swaps at all even locations within a region bounded
above {\em and} below by interfaces comprising North-East and South-East
segments. See \cref{f:321}.  It is shown in \cite{bhnp} that a permutation
admits a direct tangle if and only if it is $321$-avoiding.  (Grassmannian
permutations and their inverses are indeed $321$-avoiding.) The
correspondence is again bijective.

\subsection{Matrix gadget}

Let $n=2m$ be even, and let $\alpha\in S_m$ be a permutation.  The \df{matrix
gadget} indexed by $\alpha$ consists of swaps at the locations
$$(i+j-1,i-j)$$
for all pairs $i,j\in\{1,\ldots,m\}$ {\em except} those with $\alpha(i)= j$.
In other words, a square angled at $45^\circ$ to the axes is filled with
swaps at all odd locations, except for those locations corresponding to the
support of the (rotated) permutation matrix of $\alpha$.  See \cref{f:matrix}
for an example. The idea is that a matrix gadget performs any given
permutation on one half; the following result says that the effect on the
second half is the inverse permutation.
\begin{figure}
\centering
\begin{subfigure}[t]{.35\textwidth}
\centering
\includegraphics[width=\textwidth]{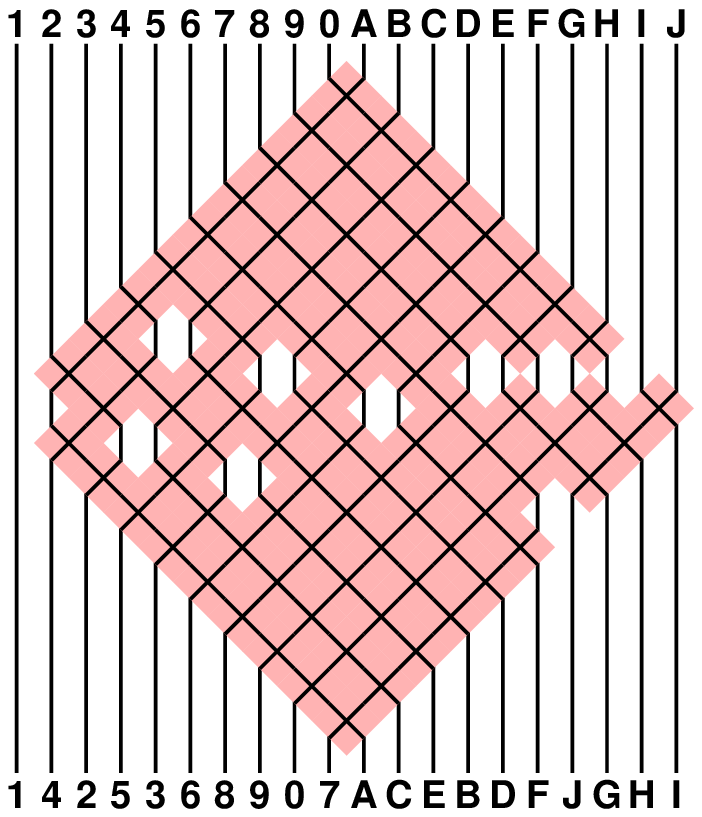}
\caption{Matrix gadget indexed by the permutation $[1,4,2,5,3,6,8,9,0,7]$.}
\label{f:matrix}
\end{subfigure}
\hspace{.1\textwidth}
\begin{subfigure}[t]{.35\textwidth}
\centering
\includegraphics[width=\textwidth]{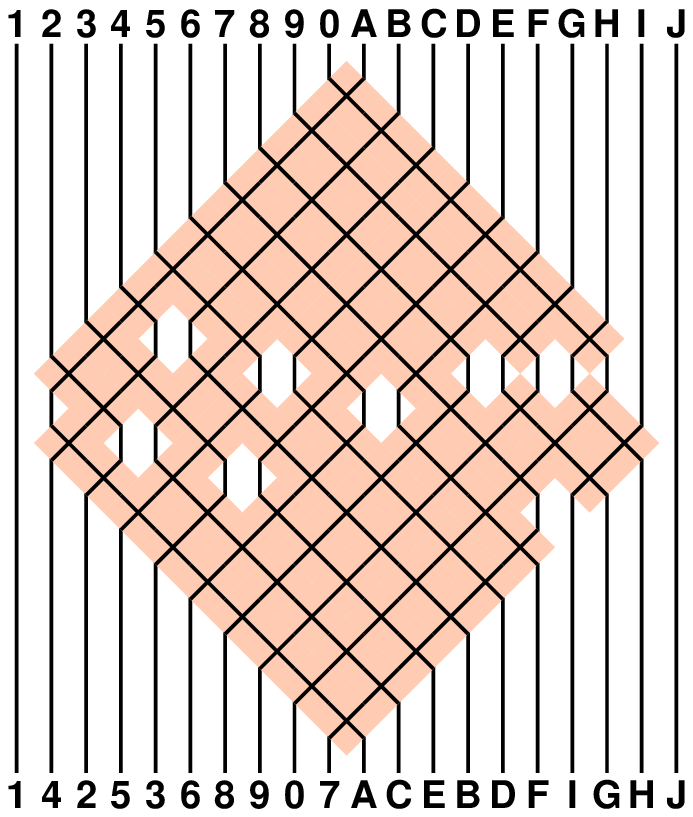}
\caption{The same gadget truncated on the right.}
\label{f:matrix-trunc}
\end{subfigure}
\caption{}
\end{figure}

\begin{lemma}
\label{lm:square} Let $n=2m$ and let $\alpha\in S_m$.  The matrix gadget
indexed by $\alpha$ performs the permutation
$$\bigl[\alpha(1),\alpha(2),\ldots,\alpha(m),\quad
\alpha^{-1}(1)+m, \alpha^{-1}(2)+m, \ldots,\alpha^{-1}(m)+m\bigr]\in S_n.$$
\end{lemma}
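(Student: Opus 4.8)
The plan is to track the trajectory of each path through the triangular array of swaps directly, using the geometry of the lattice. Recall that a swap at location $(x,t)$ means that the elements currently at positions $x$ and $x+1$ interchange at time $t$. The matrix gadget fills all odd locations $(i+j-1,i-j)$ for $i,j\in\{1,\ldots,m\}$, \emph{except} the $m$ ``holes'' where $\alpha(i)=j$. As $(i,j)$ ranges over $\{1,\ldots,m\}^2$, the point $(i+j-1,i-j)$ ranges over a $45^\circ$-rotated square of side $m$; the top corner is near $(m,1-m)$-ish and the bottom corner near $(m,m-1)$, with positions running from $1$ to $2m-1=n-1$. Fixing $i$ and varying $j$ traces an L-direction diagonal (position decreases, time increases is false — as $j$ increases, $i+j-1$ increases and $i-j$ decreases, so this is the North-West-to-South-East direction, i.e.\ an R-diagonal is traced by varying $i$ at fixed $j$). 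The key combinatorial observation is that because the array is ``full except for the permutation-matrix holes,'' a path entering from the top of column $c$ will travel down through a sequence of swaps, alternating between R-diagonals and L-diagonals, and the holes are exactly what deflect it. I would first set up coordinates carefully and state precisely which diagonal each family of swaps lies on.

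The core step is then to prove, by induction on time (equivalently, by following a path step by step), that the path which starts at top position $p$ with $1\le p\le m$ (i.e.\ entering the ``left half'' of the gadget) exits at the bottom in position $\alpha(p)$, and the path starting at top position $p$ with $m<p\le n$ exits in position $\alpha^{-1}(p-m)+m$. Concretely: a path entering on the left half descends along an R-move until it reaches the unique hole in its current ``row'' $i$ of the matrix — that hole is at $j=\alpha(i)$ — and then it is forced to turn and descend along an L-move, and so on, bouncing between the boundary of the square and the holes. One shows that after passing the hole indexed by $(i,\alpha(i))$, the left-half path that started at top position $i$ is committed to exit position $\alpha(i)$: below that hole there is nothing left to move it across, because the holes partition into a permutation matrix and each row/column has exactly one. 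Symmetrically, a path entering the right half at top position $j+m$ descends, hits the column of holes, is deflected at the unique hole with second coordinate $j$ — namely the one with $i=\alpha^{-1}(j)$ — and exits at position $\alpha^{-1}(j)+m$. The two halves interact only along the central diagonal, and one checks that the swaps there are precisely compatible with both descriptions (this is where ``all odd locations'' matters: the gadget is full, so a left-half path and a right-half path meeting in the middle simply cross, as every lattice site between them carries a swap).

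I expect the main obstacle to be making the ``bouncing'' argument rigorous rather than merely pictorial — i.e.\ proving that a path cannot do anything other than travel monotonically from one hole to the next, and that no path ever revisits a diagonal or gets ``stuck.'' The clean way to handle this is probably \emph{not} to follow individual paths but instead to verify the claimed output permutation directly: check that the multiset of swap locations of the matrix gadget equals the multiset of swap locations of some already-understood tangle built from the gadgets of the previous subsections. In fact, the matrix gadget for $\alpha$ can be decomposed as a splitter-type region on top glued to a merger-type region on the bottom (or as two direct tangles), and one can compute the composite permutation using \cref{splitter,merger,lm:square}'s precursors — but since the gadget is literally ``full,'' the cleanest route is a short lemma: \emph{a tangle consisting of all odd (or all even) locations inside a given ``staircase-bounded'' region performs the permutation determined by matching each top endpoint to a bottom endpoint via the obvious non-crossing-into-the-holes rule}, and then specialize. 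Either way, once the path-tracking bookkeeping is pinned down, reading off $[\alpha(1),\ldots,\alpha(m),\alpha^{-1}(1)+m,\ldots,\alpha^{-1}(m)+m]$ is immediate from the fact that hole $(i,\alpha(i))$ simultaneously certifies ``left path $i$ exits at $\alpha(i)$'' and ``right path $\alpha(i)+m$ exits at $i+m$.''
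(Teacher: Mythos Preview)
Your approach---trace each path through the swap array and observe that it is deflected at a hole---is exactly the paper's approach, but you are making it much harder than it is. There is no ``bouncing'': each path hits \emph{exactly one} hole and makes \emph{exactly two} moves. Concretely, path $j$ (for $1\le j\le m$) makes a single R-move along the diagonal $j'=j$ of the matrix (i.e.\ through the swap locations $(i+j-1,i-j)$ with $j$ fixed and $i$ increasing) until it reaches the unique missing swap on that diagonal, at $i=\alpha^{-1}(j)$; there it goes vertical for one step, and then makes a single L-move along the diagonal $i=\alpha^{-1}(j)$ (with $j'$ now decreasing) all the way out, since that diagonal's unique hole has already been passed. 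Symmetrically, a right-half path makes one L-move, a vertical step, and one R-move. The parity constraint (all swaps odd) is what guarantees that at each time only one neighbouring swap can be present, so a path cannot reverse except at a hole. The paper's proof is literally three sentences to this effect; your worries about rigorizing a multi-bounce picture, and your proposed detour through splitter/merger decompositions, are unnecessary.

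One indexing slip: you have $\alpha$ and $\alpha^{-1}$ interchanged. In one-line notation $\pi=[\alpha(1),\ldots,\alpha(m),\ldots]$ means element $\alpha(i)$ ends at bottom position $i$, so the path starting at top position $p$ (for $p\le m$) exits at bottom position $\alpha^{-1}(p)$, not $\alpha(p)$; and dually for the right half.
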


\begin{figure}
\centering
\begin{tikzpicture}[scale=.4]
\fill[red!20] (0,0)--(6,6)--(12,0)--(6,-6)--cycle;
\fill[white] (4,-2)--(5,-1)--(6,-2)--(5,-3)--cycle;
\draw[very thick,blue]
(1.5,1.5)node[above left,blue]{$j$}--(4.5,-1.5)--(4.5,-2.5)--(3.5,-3.5)node[below left,blue]{$j$};
\draw[very thick,green!70!black]
(9.5,2.5)node[above right,green!70!black]{$i+m$}--(5.5,-1.5)--(5.5,-2.5)--
(7.5,-4.5)node[below right,green!70!black]{$i+m$};
\draw[dashed,thick,black] (3.5,7)node[above]{$i$}--(3.5,-3.5);
\draw[dashed,thick,black] (1.5,7)node[above]{$j$}--(1.5,1.5);
\draw[dashed,thick,black] (9.5,7)node[above,xshift=8pt]{$i+m$}--(9.5,2.5);
\draw[dashed,thick,black] (7.5,7)node[above,xshift=-8pt]{$j+m$}--(7.5,-4.5);
\draw[thick,black] (0,7)--(12,7);
\end{tikzpicture}
\caption{A pair of complementary paths in a matrix gadget.
Horizontal positions are indicated along the top line.
}\label{construction-matrix}
\end{figure}
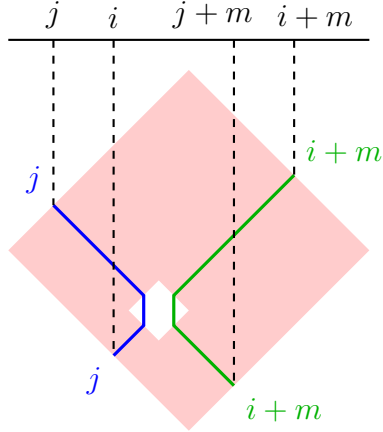
\begin{proof}
This is straightforward to check.   Suppose $\alpha(i)=j$.
Then path $j$ makes an R-move until it encounters the
``omitted swap'' corresponding to the pair $(i,j)$, and
then makes a vertical segment of length $1$ followed by an
L-move, finishing in position $i$. Similarly, path $i+m$
finishes in position $j+m$ after an L-move and an R-move.  See \cref{construction-matrix}.
\end{proof}

The matrix gadget is fundamentally more powerful than our other gadgets, in
the sense that it can perform $(n/2)!$ different permutations in $S_n$,
whereas each the others can only perform $O(c^n)$ permutations for some
constants $c$.  The matrix gadget is the source of the ``holes'' (or, more
generally, clusters) mentioned in the introduction. \cref{counting} reflects
the fact that some such construction is a requirement if we are to have only
linearly many moves.

For some of our constructions, we will need the following variants of the
matrix gadget for odd $n$.  Let $n=2m-1$, and let $\alpha\in S_{m}$.  The
\df{truncated matrix gadget} indexed by $\alpha$ is simply the matrix gadget
of the larger size $2m$ indexed by $\alpha$, but with the rightmost swap (in
location $(2m-1,0)$) omitted (whether or not it is present in the original
matrix gadget).  See \cref{f:matrix-trunc} for an example.  This gadget
performs a permutation of the form
$$\bigl[\alpha(1),\alpha(2),\ldots,\alpha(m),\quad
\ldots\bigr]\in S_{2m-1};$$ i.e.\ $\alpha$ on the $m$ leftmost positions,
 and {\em some} permutation on the $m-1$ rightmost positions.
The precise nature of the permutation on the right will not matter for our
applications.  Similarly, we may truncate a matrix gadget on the left side to
obtain any desired permutation on the rightmost $m$ positions.

Finally, note the following subtle variation.  If $n=2m$ and the index
permutation satisfies $\alpha(1)=1$, then the standard matrix gadget {\em
already} has no swap in the leftmost column.  \cref{f:matrix} is an example.
Therefore, it can also be regarded as a gadget involving only positions
$2,\ldots, 2m$, and performing any desired permutation on the positions
$2,\ldots,m$.  (And it may then be translated one position leftward, for
example).

\subsection{Reflectors}
\label{ss:reflector}

Our final gadget also comes in two complementary forms,
this time related by reflection in a vertical axis. A
\df{right reflector} gadget consists of swaps at locations
$$(i+j+1,j-i)$$
for all $j \geq 0$ and $0\leq i \leq b(j)$, where $b$ is a non-decreasing
integer-valued function of bounded support. Thus, a right reflector consists
of swaps at all odd locations in a region bounded on the left by two line
segments running South-West and South-East, and bounded on the right by an
interface comprising a sequence of South-West and South-East segments.  See
\cref{f:right}.  In this case, this rightmost bounding interface must stay to
the left of the horizontal coordinate $n$.  Therefore it corresponds to a
Dyck path. The idea of the right reflector is that every path starts with an
R-move, then has a vertical segment (now possibly of length greater than
$1$), and then is ``reflected'' back with an L-move.
\begin{figure}
\centering
\begin{subfigure}{.25\textwidth}
\includegraphics[width=\textwidth]{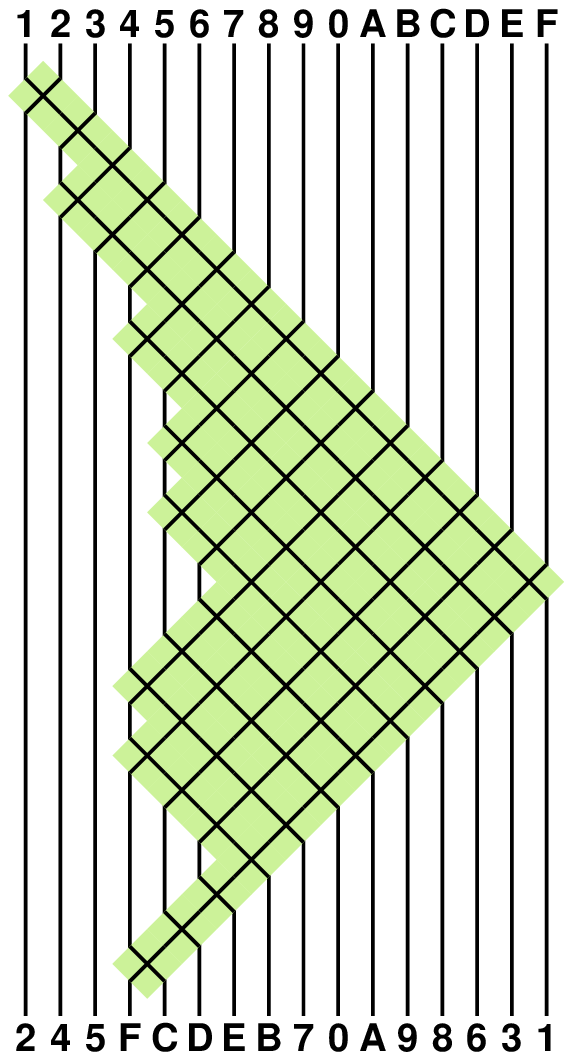}
\caption{A left reflector.}\label{f:left}
\end{subfigure}
\hspace{.1\textwidth}
\begin{subfigure}{.25\textwidth}
\centering
\includegraphics[width=\textwidth]{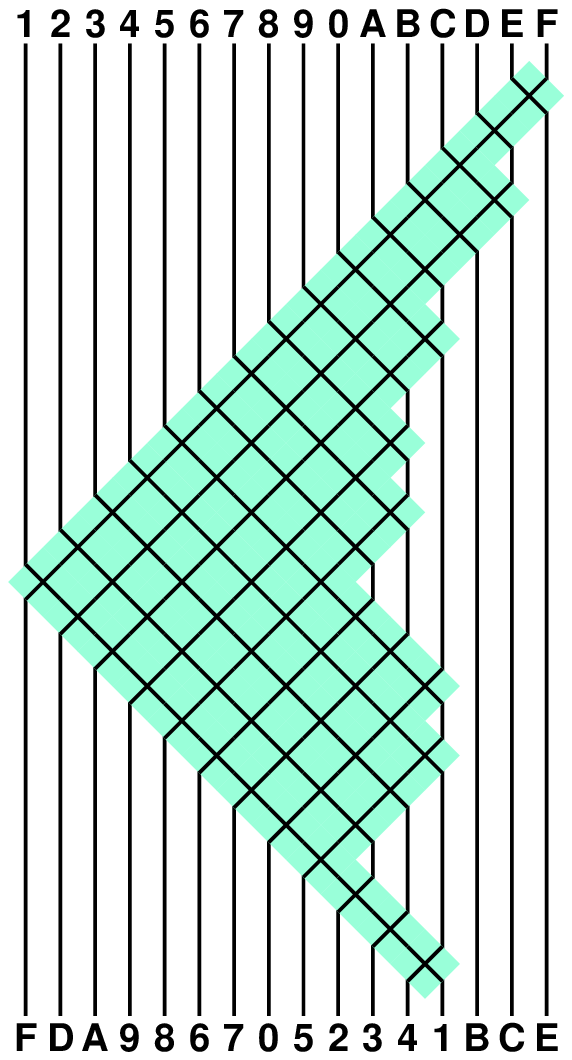}
\caption{A right reflector.}\label{f:right}
\end{subfigure}
\caption{}
\end{figure}

\begin{lemma}\label{r-reflector}
A permutation can be performed by some right-reflector if and only if it is
$132$-avoiding. Furthermore, this correspondence between gadgets and
permutations is bijective.
\end{lemma}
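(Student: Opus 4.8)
The plan is to follow the template of the proof of \cref{splitter}: describe an explicit correspondence, verify it in both directions, and observe that it is reversible. Throughout I would draw the reflector with the identity on top and the performed permutation $\pi$ on the bottom, so that path $i$ starts at position $i$ and ends at position $\pi^{-1}(i)$, and I would first record the structural facts I intend to use: (i) by the definition of the gadget, every path consists of an R-move (possibly trivial) from its start to a \emph{turnaround} position, then a vertical segment, then an L-move (possibly trivial) to its end; (ii) a right reflector is a simple tangle, so no two paths cross more than once --- this is essentially the pipe-dream remark from the introduction, and can also be checked directly since all swaps lie in a single parity class of the region; and (iii) the turnaround position of path $i$ is non-decreasing in $i$, because two paths that are simultaneously performing R-moves keep a constant horizontal gap, so the one that started further left reaches the right interface --- where turnarounds happen --- no further right than the other, and simplicity rules out their crossing beforehand.

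For the implication ``reflector $\Rightarrow$ $132$-avoiding'' I would argue by contradiction. Suppose the reflector performs $\pi$ and $\pi$ has a $132$-pattern at bottom positions $i<j<k$, so $\pi(i)<\pi(k)<\pi(j)$; abbreviate $a=\pi(i)$, $c=\pi(k)$, $b=\pi(j)$, noting $a<c<b$, and that these three paths start at top positions $a<c<b$ and end at bottom positions $i<j<k$. Only the pair formed by the $b$-path and the $c$-path is an inversion of $\pi$, so by (ii) the $a$-path crosses neither of the other two and hence stays weakly to the left of both, while the $b$-path and the $c$-path cross exactly once, with the $b$-path --- which starts furthest right and, by (iii), turns furthest right --- ending up to the \emph{left} of the $c$-path. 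I would then localize this unique crossing: at that moment the $c$-path is either stationary at its final position $k$ (while the $b$-path L-moves past it) or still R-moving (while the $b$-path L-moves toward it), and in either case I would argue that the swap region, which by definition contains \emph{every} location of the correct parity lying between its $\vee$-shaped left boundary and its right (Dyck-path) interface, is forced to contain a swap involving the $c$-path at that time, contradicting the relevant part of (i). Making this localization-and-no-gap argument precise is the step I expect to be the main obstacle, and I would probably package it as an explicit inductive description of the permutation performed by a given reflector in terms of the Dyck path traced by its right interface, since such a description is in any case what is needed for bijectivity.

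For the converse and for bijectivity I would use the Catalan recursion for $132$-avoiding permutations. If $\pi\in S_n$ is $132$-avoiding and $\pi\neq\id$, put $k=\pi^{-1}(n)$; then positions $1,\dots,k-1$ carry exactly the values $n-k+1,\dots,n-1$ in $132$-avoiding order, and positions $k+1,\dots,n$ carry exactly the values $1,\dots,n-k$ in $132$-avoiding order. By induction each smaller pattern is performed by a right reflector; I would nest these two reflectors inside a width-$n$ reflector together with an outer block of swaps that carries the lower group of paths leftward past the upper group and brings the value $n$ down to position $k$, and then check that the union of all swap locations is precisely the set of odd locations of a region with a $\vee$-shaped left boundary and a right interface that is again a Dyck path, the Dyck condition following from the two inductive hypotheses. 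This construction is manifestly reversible --- from $\pi$ one recovers $k$, peels off the outer block, and recurses --- so the map from right reflectors of width $n$ to $132$-avoiding permutations of $S_n$, being onto by the two implications, is also injective and hence a bijection, exactly as in the last sentence of the proof of \cref{splitter}. (Alternatively, one could invoke the classical fact that the dominant --- that is, $132$-avoiding --- permutations are precisely those with a unique reduced pipe dream, which is Young-diagram-shaped, and identify a right reflector with the $45^\circ$ rotation of such a pipe dream.)
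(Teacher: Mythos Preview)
Your inductive construction for the ``if'' direction is essentially the paper's: decompose a $132$-avoiding $\pi$ at the position of $n$, fill a $45^\circ$ rectangle of swaps to route $n$ and separate the two halves, and recurse into the two triangular regions to the North-East and South-East. For bijectivity the paper takes the shorter route of counting: right reflectors are encoded by Dyck paths, $132$-avoiding permutations are counted by Catalan numbers, done. Your reversibility argument would also work but is more than is needed.

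The ``only if'' direction, however, has a real gap. You correctly set up the three paths $a<c<b$ with only $(b,c)$ an inversion, and you correctly note that the tangle is simple (though your justification ``since all swaps lie in a single parity class'' is not a proof---parity alone does not imply simplicity; simplicity follows instead from the R--vertical--L shape of every path, your fact (i), exactly as the paper argues). But your proposed contradiction---``the swap region is forced to contain a swap involving the $c$-path at that time, contradicting (i)''---is not a contradiction at all: the $c$-path is \emph{supposed} to have swaps, and nothing in (i) is violated by that. You flag this step yourself as ``the main obstacle'', and indeed your argument stalls here.

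The paper's resolution is cleaner and uses your path $a$, not $c$, for the punchline. Let $x$ be the location of the unique swap between paths $b=\pi(v)$ and $c=\pi(w)$. Because the boundary function defining the reflector is non-decreasing, \emph{every} odd location in the $45^\circ$ rectangle with corners $(1,0)$ and $x$ is a swap. Path $a=\pi(u)$ starts at position $a<c$, so it enters this rectangle on its North-West side; since the rectangle is completely filled, the R-move of path $a$ carries it all the way across and out the South-East side, where it crosses path $b$. But $(a,b)$ is not an inversion of $\pi$, so in a simple tangle paths $a$ and $b$ do not cross. That is the contradiction, and that filled-rectangle observation is the idea you are missing.
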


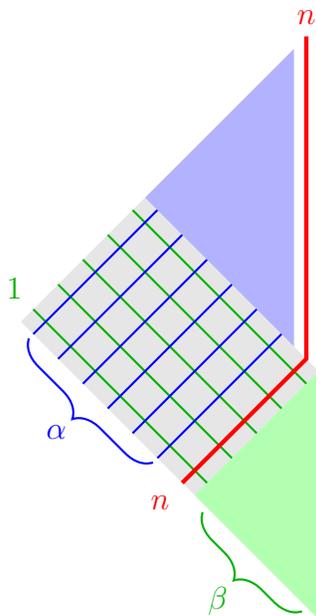
\begin{figure}
\centering
\begin{tikzpicture}[scale=.33]

\fill[black!10] (0,0)--(5,5)--(12,-2)--(7,-7); \fill[blue!30]
(5,5)--(11,11)--(11,-1); \fill[green!30] (7,-7)--(12,-12)--(12,-2);

\foreach \x in {.5,...,4.5}
  \draw[thick,green!70!black] (\x,\x)--(\x+7,\x-7);
\foreach \x in {.5,...,5.5}
  \draw[thick,blue] (\x,-\x)--(\x+5,-\x+5);

\draw[ultra thick, red] (6.5,-6.5)node[below
left]{$n$}--(11.5,-1.5)--(11.5,11.5)node[above]{$n$}; \draw
(.5,.5)node[above left,green!70!black]{$1$};

\draw [thick,blue,decorate,decoration={brace,amplitude=10pt},xshift=-5pt,yshift=-5pt]
(5.5,-5.5) -- (.5,-.5) node [below left=7pt,blue,midway]{$\alpha$};
\draw [thick,green!70!black,decorate,decoration={brace,amplitude=10pt},xshift=-5pt,yshift=-5pt]
(11.5,-11.5) -- (7.5,-7.5) node [below left=7pt,green!70!black,midway]{$\beta$};
\end{tikzpicture}
\caption{Inductive construction of a right reflector. The rectangle is chosen so as
to route path $n$ to its correct location, and the two remaining
triangles are then filled with smaller right reflectors.}
\label{reflector-construction}
\end{figure}

\begin{proof}
We prove the ``if'' direction by induction on $n$.  For $n= 1$, the claim
is clear.  For $n>1$, suppose that $\pi$ is $132$-avoiding.  Consider
the location of element $n$ in $\pi$, and write $\pi=[\alpha,n,\beta]$, where
$\alpha,\beta$ are the sequences of numbers to the left and right of $n$.
Note that $\alpha$ and $\beta$ are both $132$-avoiding.  Also, every element of
$\alpha$ is greater than every element of $\beta$, otherwise we would have a
$132$ pattern including $n$.

We construct a right reflector as shown in \cref{reflector-construction}.
There is a $45^\circ$ rectangle filled with swaps, with one corner at $(1,0)$
and an opposite corner at $(n-1,n-2\pi^{-1}(n))$; path $n$ has a vertical
segment until it hits this rectangle just above its rightmost corner, and
then has an L-move.  (A trivial case is when $\pi^{-1}(n)=n$, the rectangle
is empty, and path $n$ is vertical).  Finally, we use the inductive
hypothesis to insert two strictly smaller right reflectors, which perform the
permutations corresponding to relative orders of $\alpha$ and $\beta$, in the
triangular regions to the North-East and South-East of the rectangle.

We now turn to the ``only if'' direction.  Suppose that a right reflector
gadget $T$ performs a permutation $\pi$.  We first note that $T$ is simple.
Indeed, every path consists of an R-move, then a vertical segment, then an
L-move (where it is possible that one or both of these moves is empty); since
two paths can only cross during the R-move of one and the L-move of the
other, they cannot cross more than once.  Now suppose for a contradiction
that $\pi$ contains a $132$ pattern.  Thus, there exist $u<v<w$ with
$\pi(u)<\pi(w)<\pi(v)$.  Consider the location $x$ of the unique swap between
paths $\pi(v)$ and $\pi(w)$.  By the definition of the right reflector, every
odd location in the $45^\circ$ rectangle with corners $(1,0)$ and $x$
contains a swap.  However, path $\pi(u)$ starts to the left of path $\pi(w)$,
and traverses the entire rectangle during its R-move, and crosses path
$\pi(v)$ at the South-East side of the rectangle. This contradicts
simplicity.

To check bijectivity, since clearly every gadget performs only one
permutation, it is enough to check that the two sets have equal cardinality.
The number of $132$-avoiding permutations in $S_n$ is given by the Catalan
number $C_n$.  A right reflector gadget is encoded by a Dyck path describing
its right boundary.  Therefore the number of them is also $C_n$.  See
e.g.~\cite[Ex.~6.19]{s-ec-99}.
\end{proof}

We remark that the standard Catalan recurrence $C_{n+1}=\sum_{i=0}^n C_i
C_{n-i}$ is implicit in our inductive construction above.  Arguments similar
to ours appear in the context of \textit{stack sorting}
(see~\cite[p.~14]{west90}~and~\cite{knuth73}).

A \df{left reflector} gadget is simply the image of a right reflector under
the reflection in the vertical line through the center of the permutation.
Thus it has swaps at locations
$$(n-i-j,j-i)$$
for $i$, $j$ and $b(\cdot)$ as before.  See \cref{f:left}.  The next result
follows immediately by symmetry.

\begin{lemma}\label{l-reflector}
A permutation can be performed by some left reflector if and only if it is
$213$-avoiding. This correspondence is bijective.
\end{lemma}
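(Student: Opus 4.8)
The plan is to obtain \cref{l-reflector} from \cref{r-reflector} via the reflection symmetry, as the text suggests. Reflecting a width-$n$ tangle in a vertical axis (sending the position coordinate $p$ to $n+1-p$, leaving the time coordinate fixed) is an involution on width-$n$ tangles, and by definition it carries right reflectors onto left reflectors; being an involution, it is a bijection between the two families. So the entire content is to track what this reflection does to the performed permutation, and then to see that the resulting map on $S_n$ exchanges $132$-avoidance with $213$-avoidance.

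First I would record the effect of reflection on the performed permutation: if a tangle $T$ performs $\pi$, then its vertical reflection $T'$ performs the reverse–complement $\pi' := \rev\cdot\pi\cdot\rev$, that is, $\pi'(i) = n+1-\pi(n+1-i)$. This is an immediate check using the relabeling convention. In $T$ the path labeled $\ell$ runs from position $\ell$ (in the initial identity permutation) to position $\pi^{-1}(\ell)$ (in the final permutation $\pi$); after reflecting, that path runs from position $n+1-\ell$ to position $n+1-\pi^{-1}(\ell)$. Since tangles are identified up to left multiplication by a fixed permutation, we may relabel by $\rev$; this restores the identity as the initial permutation, and makes the final permutation $\rev\cdot\pi\cdot\rev$.

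Next I would observe that reverse–complement is a bijection of $S_n$ that interchanges $132$-avoiding and $213$-avoiding permutations. For indices $i_1<i_2<i_3$ set $j_k := n+1-i_{4-k}$, so that $i_1<i_2<i_3$ if and only if $j_1<j_2<j_3$; then, using $\pi'(j)=n+1-\pi(n+1-j)$, the relation $\pi(i_1)<\pi(i_3)<\pi(i_2)$ holds precisely when $\pi'(j_2)<\pi'(j_1)<\pi'(j_3)$. Hence $\pi$ contains the pattern $132$ if and only if $\pi'$ contains the pattern $213$; equivalently, $\pi$ is $132$-avoiding if and only if $\pi'$ is $213$-avoiding.

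Finally I would assemble the pieces: a permutation is performed by some left reflector if and only if its reverse–complement is performed by some right reflector (by the reflection correspondence), if and only if that reverse–complement is $132$-avoiding (by \cref{r-reflector}), if and only if the original permutation is $213$-avoiding (by the previous paragraph). Bijectivity follows since each link in the chain is a bijection: reflection is a bijection between left and right reflectors, reverse–complement is a bijection of $S_n$, and the right-reflector correspondence of \cref{r-reflector} is bijective. I do not anticipate any genuine obstacle; the only point needing care is keeping the relabeling convention straight, so that the reflected tangle is seen to perform exactly the reverse–complement $\rev\cdot\pi\cdot\rev$ rather than merely the reverse or merely the complement of $\pi$.
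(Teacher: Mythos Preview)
Your proposal is correct and is precisely the symmetry argument the paper invokes (the paper's proof is the single line ``follows immediately by symmetry''); you have simply made explicit the two ingredients that the paper leaves to the reader, namely that vertical reflection sends the performed permutation $\pi$ to $\rev\cdot\pi\cdot\rev$ and that this reverse--complement map interchanges $132$-avoidance with $213$-avoidance.
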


In our applications, we will prove and use two properties of $132$-avoiding (or
$213$-avoiding) permutations that are interesting in their own right: (i) any
permutation can be decomposed into a cyclic permutation and a $132$-avoiding
permutation (\cref{s:fish}); (ii) a $132$-avoiding permutation can be found
that maps any given subset of $\{1,\ldots,n\}$ to any other subset of the
same size (\cref{s:8n}).

\section{Logarithmic moves per path}\label{s:log}

Our simplest construction uses only splitters to obtain a simple tangle with
logarithmically many moves per path.

\begin{figure}
\centering
\begin{tikzpicture}[scale=.45]
\filldraw[fill=cyan!30] (0,0) -- (-6,6) .. controls (-2,8) and (2,4) .. (6,6) --cycle;
\filldraw[fill=black!20] (-6,-2) rectangle (-.2,-6);
\filldraw[fill=black!20] (6,-2) rectangle (.2,-6);
\path (-6,7.5)node{$1$} -- (6,7.5)node{$n$} node[midway]{$\cdots$};
\path (-6,-1)node{$\rho(1)$} -- (-1,-1)node{$\rho(m)$} node[midway]{$\cdots$};
\path (3.4,-1)node{$\rho(m+1)\cdots\rho(n)$};
\path (-6,-7)node{$\pi(1)$} -- (-1,-7)node{$\pi(m)$} node[midway]{$\cdots$};
\path (3.5,-7)node{$\pi(m+1)\cdots\pi(n)$};

\draw [thick,blue,decorate,decoration={brace,amplitude=10pt},yshift=16pt]
(-6,-1) -- (-.2,-1) node [above=10pt,blue,midway]{$L$};
\draw [thick,blue,decorate,decoration={brace,amplitude=10pt},yshift=16pt]
(.2,-1) -- (6,-1) node [above=10pt,blue,midway]{$R$};

\end{tikzpicture}
\caption{Construction for \cref{log}. After the initial splitter, the two rectangles signify
smaller recursively defined versions of the same construction.}
\label{f:log-construction}
\end{figure}
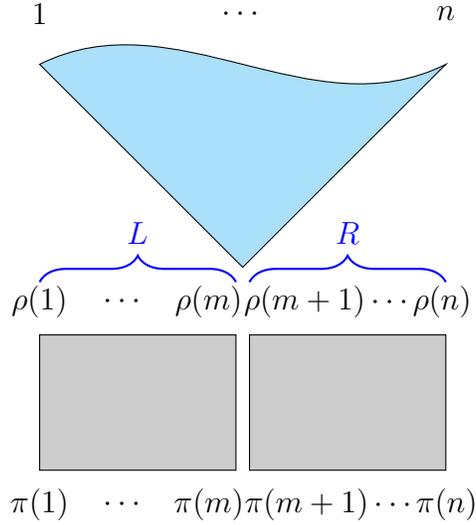
\begin{proof}[Proof of \cref{log}]
See \cref{f:log-construction} for the construction and \cref{f:log} for an
example. Let $\pi\in S_n$ be any permutation and let $m=\lfloor n/2\rfloor$.
Let $L=\{\pi(1),\ldots,\pi(m)\}$ and $R=\{\pi(m+1),\ldots,\pi(n)\}$. Consider
the Grassmannian permutation $\rho$ obtained by writing the elements of $L$
in increasing order followed by the elements of $R$ in increasing order.  By
\cref{splitter} there is a splitter than performs $\rho$.  We first apply
this splitter.  It remains to perform $\rho^{-1}\cdot\pi$, which is an
$(m,n-m)$-split permutation. Thus, we can split into two subproblems.  We
then recursively apply the same procedure to each, and place the resulting
tangles below the initial splitter, after appropriate translations.

Each path performs at most one move within each splitter
that it encounters (perhaps fewer, since some may splitters
involve no move for the path, and some pairs of splitters
may be positioned to abut one another,
so that two moves coalesce).  A path encounters at most
$\lceil \log_2 n\rceil$ splitters.

The tangle is simple, since if two paths cross in the first splitter, then
they subsequently remain in the two distinct halves.
\end{proof}

We remark that the above construction can be modified to
obtain a tangle with only one cluster, and $O(\log n)$
moves per path, thus matching up to constants the extremal case
$\theta\nearrow\tfrac12$ of \cref{counting}.  After the
first splitter, route path $\pi(m)$ alongside the
South-West boundary of the splitter to its correct position
$m$.  This path then remains vertical for the rest of the
tangle, keeping the two halves apart and preventing
formation of holes. Iterate on the two intervals $[1,m-1]$
and $[m+1,n]$, and ensure that the subsequent splitters are
translated upward until they touch some swap of a previous
stage.

\section{Bounded moves per path}\label{s:fish}

In this section we prove \cref{fish}.  The construction
will make essential use of reflector gadgets.  We use the
following key property of $312$-avoiding permutations, which
we will then extend to other patterns of length $3$. A
permutation is called \df{cyclic} if it has only one cycle
(or orbit).

\begin{lemma}
\label{cyclic} For any permutation $\pi \in S_n$, there exists a $312$-avoiding
permutation $\sigma$ such that $\sigma\cdot\pi$ is cyclic.
\end{lemma}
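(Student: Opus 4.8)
The plan is to prove this by induction on $n$, choosing the image of the largest element of $\sigma\cdot\pi$ so as to build a single long cycle while keeping $\sigma$ $312$-avoiding. First I would recall that $\sigma$ is $312$-avoiding if and only if, reading $\sigma$ from left to right, the positions where a new left-to-right minimum is \emph{not} created can only host values that are larger than everything seen since the last left-to-right minimum (equivalently, $\sigma$ decomposes as a sequence of increasing runs whose ``blocks'' are nested in the characteristic stack-sortable way). A cleaner equivalent description: $\sigma$ is $312$-avoiding iff for every $k$, the set $\{\sigma(1),\dots,\sigma(k)\}$ is a union of at most one ``initial-looking'' interval structure — concretely, each prefix of $\sigma$, when restricted to the values it does not yet contain at the bottom, leaves an up-set. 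I would pick whichever characterization makes the inductive step cleanest; the key usable fact is that $312$-avoiding permutations are exactly those sortable by a single stack, and they are closed under the operation of deleting the entry $1$ (or the entry in position $1$) and relabelling.

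The inductive step I envisage: given $\pi\in S_n$, look at $\pi^{-1}(n)$, i.e.\ the position $q$ holding the value $n$. I want to specify $\sigma(q)$ and thereby the value of $(\sigma\cdot\pi)$ at the relevant index, then delete a row/column and apply induction to the resulting $\pi'\in S_{n-1}$ to get a $312$-avoiding $\sigma'$ whose composite with $\pi'$ is an $(n-1)$-cycle; finally I extend $\sigma'$ back to a $312$-avoiding $\sigma\in S_n$ by re-inserting the deleted value in a position forced by $312$-avoidance (the largest value $n$ of $\sigma$ must go in a spot that keeps the up-set condition, which pins it down up to the one degree of freedom I need). The composite $\sigma\cdot\pi$ then differs from $\sigma'\cdot\pi'$ (viewed inside $S_n$) by a transposition-like modification that splices the new element into the existing $(n-1)$-cycle, yielding an $n$-cycle. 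The arithmetic of ``which transposition'' is exactly what I must arrange, and I have essentially one free choice (where among the allowed slots to put the largest entry of $\sigma$), which should suffice to hit the cycle.

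The main obstacle is the bookkeeping of the composition: I need $\sigma$ to be $312$-avoiding \emph{and} $\sigma\cdot\pi$ to be a single cycle, and these two constraints live on ``opposite sides'' of the composition, so the interaction between the structural constraint on $\sigma$ and the cycle-merging operation on $\sigma\cdot\pi$ is delicate. The place I expect trouble is verifying that the one available slot for re-inserting the top value of $\sigma$ indeed both (a) preserves $312$-avoidance of the full $\sigma$ and (b) performs a cycle-merging (rather than cycle-splitting) transposition on $\sigma'\cdot\pi'$. I would handle (a) by the closure property above and a direct check that inserting a maximum into a $312$-avoiding word keeps it $312$-avoiding precisely when it is placed at a position that is a left-to-right maximum of the resulting word — there is always such a position, and in fact the position is essentially forced — and handle (b) by tracking the cycle structure explicitly: composing with a transposition $(a\;b)$ merges the cycles of $a$ and $b$ when they lie in different cycles and splits otherwise, so I just need the two indices involved to lie in the (unique) cycle of $\sigma'\cdot\pi'$ together with the newly added index $n$, which it does by construction since $n$ starts as a fixed point and gets spliced in. If the forced slot happens to give the wrong sign, I would fall back to a base-case verification for small $n$ and an alternative choice of which element (largest vs.\ value in position $1$) to strip, one of which should always work; a short case analysis on the position $\pi^{-1}(n)$ relative to $1$ and $n$ should close it.
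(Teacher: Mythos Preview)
Your proposal is not a proof; it is an outline with openly acknowledged gaps, and the gaps are real. Two concrete problems:

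\emph{First, a bookkeeping confusion.} You write that you will ``specify $\sigma(q)$'' where $q=\pi^{-1}(n)$, but in the composition $(\sigma\cdot\pi)(i)=\sigma(\pi(i))$ the permutation $\sigma$ acts on \emph{values}, not positions, of $\pi$. What controls $(\sigma\cdot\pi)(q)$ is $\sigma(n)$, not $\sigma(q)$; and later you switch to talking about where the \emph{value} $n$ sits in $\sigma$. These are different degrees of freedom with different $312$-avoidance constraints, and the write-up slides between them.

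\emph{Second, and more seriously, the compatibility of your two constraints is never established.} Inserting the value $n$ into a $312$-avoiding word $\sigma'$ at position $p$ keeps it $312$-avoiding exactly when the suffix $\sigma'(p),\sigma'(p{+}1),\ldots$ is strictly decreasing (otherwise $n$ becomes the ``$3$'' of a $312$). The number of admissible positions is therefore one plus the length of the longest decreasing suffix of $\sigma'$, and you have no control over $\sigma'$: it comes from the induction hypothesis. So your ``one free choice'' may be as few as two positions, and you give no argument that one of them produces a cycle-merging (rather than cycle-splitting) transposition in $\sigma\cdot\pi$. Your fallback (``if the forced slot happens to give the wrong sign\ldots one of which should always work'') is a hope, not an argument; you would need to exhibit, for every $\pi$, a concrete admissible choice and verify it merges. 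Your characterization ``placed at a position that is a left-to-right maximum of the resulting word'' is also not correct as stated: the inserted $n$ is trivially a left-to-right maximum wherever it goes.

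The paper's proof takes a completely different and much cleaner route, avoiding induction on $n$ altogether. Starting from $\tau=\pi$, it repeatedly finds a \emph{maximal rainbow interval} $[a,b]$ (an interval whose elements lie in pairwise distinct cycles of the current $\tau$) of length at least $2$, and pre-composes by the cyclic rotation $\kappa=[\,1,\ldots,a{-}1,\;a{+}1,\ldots,b,a,\;b{+}1,\ldots,n\,]$, which merges all those cycles into one. Because subsequent rainbow intervals are disjoint from earlier ones, the rotations commute and act on disjoint blocks; a product of interval-rotations on disjoint intervals is manifestly $312$-avoiding, and the process terminates with a cyclic permutation. The key insight you are missing is this block-local construction: it makes $312$-avoidance automatic rather than something to be reconciled, step by step, with the cycle structure.
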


\begin{proof}
Assume $n\geq 2$, otherwise the result is trivial. We use an iterative
procedure to compute a suitable $\sigma$. We start with $\pi$, and
pre-compose it by a sequence of suitably chosen disjoint cycles.  The
composition of these cycles will be $312$-avoiding.  Given the current
permutation $\tau$ (which is initially equal to $\pi$), a \df{rainbow
interval} is an interval $[a,b]$ such that all elements $i\in[a,b]$ belong to
distinct cycles of $\tau$.  A \df{maximal} rainbow interval $[a,b]$ is one
that is not a proper subset of another; thus, either we have $a=1$, or $a-1$
belongs to the same cycle as some element of $[a,b]$; a similar condition
holds at the other end.  If $\tau$ is not cyclic, then there exists some
maximal rainbow interval $[a,b]$ of length at least $2$.  We now replace
$\tau$ with the permutation $\tau':=\kappa\cdot\tau$, where
$$\kappa:=\Bigl[1,\ldots,a-1,\;\;\underbrace{a+1,a+2,\ldots,b,a},\;\; b+1,\ldots,n\Bigr],$$
(i.e.\ a rotation of the interval $[a,b]$; note that $\kappa$ is
$312$-avoiding). The effect of this change is to unite all the distinct
cycles of the elements of $[a,b]$ into one cycle; all other cycles are
unchanged. Consequently, if we iterate this operation, the rainbow intervals
used at successive steps will be disjoint, and eventually $\tau$ will be
cyclic. Moreover, the various cycles $\kappa$ used at different steps commute
with each other, and their composition $\sigma$ is $312$-avoiding.
\end{proof}

\begin{cor}\sloppypar
\label{more-cyclic} For any permutation $\pi \in S_n$, and any pattern
$p\in\{312,231,213,132\}$, there exists a $p$-avoiding permutation $\sigma$
such that $\sigma\cdot\pi$ is cyclic.
\end{cor}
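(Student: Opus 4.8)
The plan is to deduce \cref{more-cyclic} from \cref{cyclic} by exploiting symmetries of the pattern-avoidance classes under inversion and reversal, and observing that ``being cyclic'' is preserved (or can be restored) under the corresponding operations on permutations. Concretely, recall the standard involutions on $S_n$: the inverse map $\pi\mapsto\pi^{-1}$ and the reverse map $\pi\mapsto w_0\cdot\pi$ where $w_0=[n,n-1,\ldots,1]$ (or the reverse-of-values $\pi\mapsto\pi\cdot w_0$), together with the reverse-complement. A pattern $p$ is avoided by $\pi$ if and only if $p^{-1}$ is avoided by $\pi^{-1}$, and the reverse pattern is avoided by the reverse. Under these symmetries the orbit of the pattern $312$ is $\{312,231,132,213\}$: indeed $312^{-1}=231$, the reverse of $312$ is $213$, and the reverse of $231$ is $132$. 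So each target pattern $p\in\{312,231,213,132\}$ is obtained from $312$ by one of these transformations.

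The key steps I would carry out are as follows. First, for the case $p=231$: given $\pi$, apply \cref{cyclic} to $\pi^{-1}$ to get a $312$-avoiding $\sigma_0$ with $\sigma_0\cdot\pi^{-1}$ cyclic; then set $\sigma:=(\sigma_0)^{-1}$-conjugated appropriately. Here one must be careful about which side the new permutation multiplies on. Note that $\sigma_0\cdot\pi^{-1}=(\pi\cdot\sigma_0^{-1})^{-1}$, and a permutation is cyclic if and only if its inverse is cyclic, so $\pi\cdot\sigma_0^{-1}$ is cyclic. That gives a factor on the \emph{right}, not the left. To move it to the left, use that $\pi\cdot\tau$ cyclic implies $\tau\cdot\pi$ cyclic, since $\pi\cdot\tau$ and $\tau\cdot\pi=\pi^{-1}(\pi\tau)\pi$ are conjugate, hence have the same cycle type; and $\sigma_0^{-1}$ is $231$-avoiding because $\sigma_0$ is $312$-avoiding and $312^{-1}=231$. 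Thus $\sigma:=\sigma_0^{-1}$ works for $p=231$. Second, for $p=213$ and $p=132$: replace the inverse symmetry with the appropriate reversal. For instance the map $\phi(\rho)=w_0\cdot\rho\cdot w_0$ sends $312$-avoiding permutations to $132$-avoiding ones (reverse-complement), and one checks directly that $\phi$ preserves cycle type because it is conjugation by $w_0$ composed with... — more cleanly, observe $w_0\rho w_0$ is conjugate to $\rho$, so cyclicity is preserved, and simultaneously handle the left/right issue as before. Running through the three nontrivial patterns this way, each reduces in one or two lines to \cref{cyclic}.

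Let me restate the single clean observation that makes all cases uniform: \textbf{(a)} $\pi$ cyclic $\iff$ $\pi^{-1}$ cyclic; \textbf{(b)} $\alpha\cdot\beta$ and $\beta\cdot\alpha$ are conjugate, hence have the same cycle type, so $\alpha\beta$ cyclic $\iff$ $\beta\alpha$ cyclic; \textbf{(c)} $w_0\pi w_0$ is conjugate to $\pi$ (indeed it \emph{is} $\pi$ read backwards in both coordinates, and $w_0^2=\id$ makes it a conjugation), so it has the same cycle type. Combining (a)--(c) with the three pattern identities ($312^{-1}=231$; $\rev(312)=213$; $\rev(312^{-1})=132$) and the fact that the corresponding operation on $\sigma$ preserves $p$-avoidance of the right flavour, each target pattern is dispatched. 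The main obstacle — really the only subtlety — is the bookkeeping of left- versus right-multiplication under the inverse map: naively $\sigma\cdot\pi\mapsto\pi^{-1}\cdot\sigma^{-1}$ puts the new factor on the wrong side, and one needs point (b) to swap it back. Everything else is a routine verification that the standard symmetries of pattern classes do what is claimed, which I would state in a sentence or two rather than belabour.
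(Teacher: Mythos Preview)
Your approach is correct in spirit and very close to the paper's: both derive the three remaining patterns from \cref{cyclic} via the standard symmetries of pattern classes (inverse, reverse, reverse-complement), together with the observations that cyclicity is preserved under inversion and under conjugation (in particular $\alpha\beta$ and $\beta\alpha$ have the same cycle type). Your key observations (a)--(c) and the pattern identities $312^{-1}=231$, $\rev(312)=213$, $\rev(231)=132$ are all correct and suffice.

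There is one factual slip to fix: the map $\phi(\rho)=w_0\rho w_0$ (reverse-complement) sends $312$-avoiding permutations to $231$-avoiding ones, not $132$-avoiding ones, since the reverse-complement of $312$ is $231$. This does not break your argument, because you do not actually rely on this sentence---your listed identities are correct and your framework (a)--(c) handles all cases---but the sentence as written is wrong and should be deleted or corrected.

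For comparison, the paper makes a slightly different choice of symmetry for $p=231$: it conjugates $\pi$ by $\rev$ and then observes that $\rev^{-1}\sigma\,\rev$ is $231$-avoiding (using reverse-complement), whereas you apply \cref{cyclic} to $\pi^{-1}$ and use $312^{-1}=231$ together with (a) and (b). Both routes work; yours is arguably slightly slicker for $231$ but requires the extra left/right swap via (b). For $p=213$ the paper does exactly what your framework yields (apply \cref{cyclic} to $\rev\cdot\pi$, then $\sigma\cdot\rev$ is $213$-avoiding), and for $p=132$ it combines the $213$ case with the conjugation trick. It would strengthen your write-up to spell out the $213$ and $132$ reductions explicitly in one line each, rather than leaving them as ``dispatched''.
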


\begin{proof}
\cref{cyclic} is the case $p=312$.  Let $\rev:=[n,n-1,\ldots,1]\in S_n$ be
the reverse permutation.  For the case $p=231$, apply \cref{cyclic} to the
conjugate permutation $\rev\cdot\pi\cdot\rev^{-1}$ to obtain a $312$-avoiding
$\sigma$ with $\sigma\cdot\rev\cdot\pi\cdot\rev^{-1}$ cyclic.  The conjugate
of the last permutation by $\rev^{-1}$ is $\rev^{-1}\cdot
\sigma\cdot\rev\cdot\pi\cdot\rev^{-1}\cdot\rev = (\rev^{-1}\cdot
\sigma\cdot\rev)\cdot \pi$, which thus is cyclic also. The permutation
$\rev^{-1}\cdot \sigma\cdot\rev$ is $231$-avoiding, as required.

For $p=213$, apply \cref{cyclic} to $\rev\cdot \pi$, to
obtain a $312$-avoiding $\sigma$ with
$\sigma\cdot\rev\cdot\pi$ cyclic.  Then $\sigma\cdot\rev$
is $213$-avoiding.   Finally, for $p=132$, apply the
conjugation trick to the $p=213$ case.
\end{proof}

Here is the main step in the proof of \cref{fish}.
\begin{lemma}
\label{lm:2squares} For $n$ even and any $(n/2,n/2)$-split permutation $\pi
\in S_n$, there is a tangle with at most $4$ moves per path that performs
$\pi$.
\end{lemma}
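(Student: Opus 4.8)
The plan is to handle each half of the split permutation $\pi=[\pi_L,\pi_R]$ by a combination of a matrix gadget and a reflector, using the structural facts about $132$-avoiding permutations from \cref{s:gadgets}. Write $\pi$ as acting separately on $[1,m]$ and $[m+1,2m]$, where $m=n/2$; let $\alpha\in S_m$ be the permutation induced on the left half and $\gamma\in S_m$ the one induced on the right. The key idea is: a matrix gadget indexed by some permutation $\beta$ performs $\beta$ on the left half \emph{and simultaneously} $\beta^{-1}$ (shifted) on the right half (\cref{lm:square}). So if I could choose $\beta$ so that $\beta$ does most of the work of $\alpha$ on the left and $\beta^{-1}$ does most of the work of $\gamma$ on the right, then a bounded number of correction moves on each side finishes the job. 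By \cref{r-reflector} (or \cref{l-reflector}), one reflector gadget contributes at most $2$ moves to each path and realizes exactly the $132$-avoiding (resp.\ $213$-avoiding) permutations.

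Concretely, I would proceed as follows. First, invoke the decomposition from property (ii) mentioned at the end of \cref{ss:reflector}, or more directly: use a reflector below the matrix gadget on the left half to convert $\beta$ into $\alpha$, i.e.\ choose $\beta$ so that $\alpha=\sigma_L\cdot\beta$ with $\sigma_L$ a $132$-avoiding (or $213$-avoiding) permutation realizable by one reflector; that reflector costs at most $2$ moves on each left-half path, and the matrix gadget costs at most $2$ moves (one R-move, a short vertical segment, one L-move, as in the proof of \cref{lm:square}) — but note the matrix-gadget path actually uses an R-move then an L-move, so together with a reflector that begins with an R-move the counts must be checked carefully; I expect the two moves where the gadgets meet to coalesce or to be arrangeable so they do, keeping the left half at $4$ moves per path. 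Simultaneously, on the right half the matrix gadget performs $\beta^{-1}$ (up to the shift), and I need a second reflector there to turn $\beta^{-1}$ into $\gamma$; this forces the constraint that $\gamma\cdot\beta=\tau_R$ be realizable by one reflector, i.e.\ be $132$-avoiding or $213$-avoiding. So the crux is a single algebraic demand: \textbf{find $\beta\in S_m$ such that both $\alpha\cdot\beta^{-1}$ is $p$-avoiding and $\gamma\cdot\beta$ is $p'$-avoiding}, for suitable patterns $p,p'$ among $\{132,213\}$ — and then fit the gadgets together so that the move counts add to at most $4$ per path on each side.

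The main obstacle is precisely this simultaneous pattern-avoidance requirement: a priori there is no reason one $\beta$ serves both halves. I would resolve it using \cref{cyclic}/\cref{more-cyclic}, or rather a companion fact: rewrite the demand as "$\alpha$ differs from $\beta$ by a $132$-avoiding permutation and $\gamma$ differs from $\beta^{-1}$ by another one." Taking $\beta=\gamma^{-1}$ makes the right-hand condition trivial (identity is $132$-avoiding) and reduces everything to: is $\alpha\cdot\gamma$ realizable, up to a $132$-avoiding correction, by one more gadget? If not directly, then I would introduce a third gadget-layer — e.g.\ a splitter or a second matrix gadget — absorbing one additional move per path, and re-budget: two gadgets of two moves each is already four, so there is no slack, and I expect the real work to be showing that \emph{one} matrix gadget plus \emph{one} reflector per side suffices, which hinges on choosing $\beta$ cleverly rather than generically. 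The secondary technical point, which I expect to be routine but fiddly, is the parity/abutment bookkeeping: the matrix gadget lives on odd locations, reflectors on odd locations, splitters on even locations, so translations must be chosen to align parities and to make the boundary moves of adjacent gadgets merge (as in the "two moves coalesce" remark in the proof of \cref{log}) rather than add. Once $\beta$ is pinned down and the gadgets are stacked with the correct translations, counting gives at most $2+2=4$ moves per path, possibly reduced further where moves coalesce, completing the proof.
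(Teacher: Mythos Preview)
Your proposal has the right ingredients but a genuine structural gap: one matrix gadget is not enough.  With a single matrix gadget indexed by $\beta$, the left half receives $\beta$ and the right half receives $\beta^{-1}$, so (with reflectors below) you need $\beta^{-1}\alpha$ to be $213$-avoiding \emph{and} $\beta\gamma$ to be $132$-avoiding simultaneously.  You identify this as ``the crux'' and then hope it can be resolved via \cref{cyclic}/\cref{more-cyclic} or by taking $\beta=\gamma^{-1}$, but neither works.  Taking $\beta=\gamma^{-1}$ forces the left-side correction to be $\gamma\alpha$, an arbitrary permutation, which is generically not $132$- or $213$-avoiding.  More fundamentally, a counting argument kills the one-matrix plan: for fixed $\alpha,\gamma$, each choice of $\beta$ determines $\beta^{-1}\alpha$ and $\beta\gamma$ uniquely, and the chance that a given permutation in $S_m$ is $132$-avoiding is $C_m/m!$; so the expected number of good $\beta$ over all $m!$ choices is $m!\cdot (C_m/m!)^2=C_m^2/m!\to 0$.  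Hence for large $m$ most pairs $(\alpha,\gamma)$ admit no such $\beta$ at all.

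The paper's fix is to use \emph{two} matrix gadgets, one above and one below, with a left reflector and a right reflector sandwiched between them.  Writing $\alpha,\beta$ for the two matrix indices and $\rho_1,\rho_2$ for the reflector permutations, the requirements become $\alpha\cdot\rho_1\cdot\beta=\pi_1$ and $\alpha^{-1}\cdot\rho_2\cdot\beta^{-1}=\pi_2$.  Eliminating $\beta$ turns this into a \emph{conjugacy} condition: $\rho_2\cdot\pi_1^{-1}$ must be conjugate to $\pi_2\cdot\rho_1^{-1}$.  Now \cref{more-cyclic} is exactly what is needed: choose $132$-avoiding $\rho_2$ making $\rho_2\cdot\pi_1^{-1}$ cyclic, and $213$-avoiding $\rho_1$ making $\pi_2\cdot\rho_1^{-1}$ cyclic; two cyclic permutations of the same length are conjugate, so $\alpha$ exists, and then $\beta$ is determined.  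The move count is then immediate: each path gets two moves in each matrix gadget, and the reflector moves coalesce with the matrix moves at the shared boundaries, for a total of $4$.  The extra matrix gadget is what converts an unsatisfiable simultaneous pattern-avoidance constraint into a satisfiable conjugacy constraint.
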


\begin{figure}
\centering
\begin{subfigure}[c]{.4\textwidth}
\centering
\begin{tikzpicture}[scale=.3]
\filldraw[fill=red!20] (0,0)--(6,6)--(0,12)--(-6,6)--cycle;
\filldraw[fill=orange!20] (0,0)--(6,-6)--(0,-12)--(-6,-6)--cycle;
\filldraw[fill=green!30!cyan!30] (0,0)--(6,6)--(6,-6)--cycle;
\filldraw[fill=green!70!yellow!30] (0,0)--(-6,6)--(-6,-6)--cycle;

\draw[thick,->] (-2,8) to[bend left=30]node[left,midway]{$\alpha$} (-2,4) ;
\draw[thick,->] (2,8) to[bend right=30]node[right,midway]{$\alpha^{-1}_{\phantom{-1}}$} (2,4) ;
\draw[thick,->] (-4,2) to[bend right=30]node[right,midway]{$\sigma_1$} (-4,-2) ;
\draw[thick,->] (4,2) to[bend left=30]node[left,midway]{$\sigma_2$} (4,-2) ;
\draw[thick,->] (-2,-4) to[bend left=30]node[left,midway]{$\beta$} (-2,-8) ;
\draw[thick,->] (2,-4) to[bend right=30]node[right,midway]{$\beta^{-1}_{\phantom{-1}}$} (2,-8) ;
\end{tikzpicture}
\caption{The construction for \cref{lm:2squares}: two matrix gadgets, a left
reflector and a right reflector.}
\end{subfigure}
\hspace{.1\textwidth}
\begin{subfigure}[c]{.4\textwidth}
\centering
\includegraphics[width=.8\textwidth]{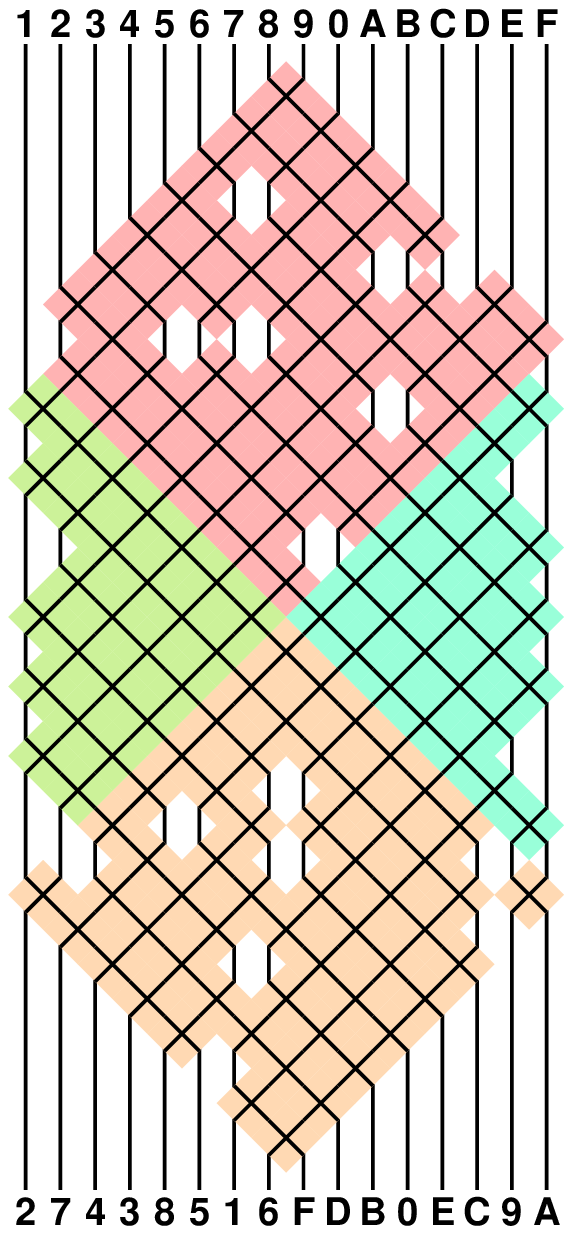}
\caption{An example.}
\end{subfigure}
\caption{}\label{f:fish-body}
\end{figure}
\begin{proof}[Proof of \cref{lm:2squares}]
 Let $n=2m$.  Since $\pi$ is $(m,m)$-split, there exist $\pi_1,\pi_2\in S_m$ such that
$\pi_1=(\pi(1),\ldots,\pi(m))$ and
$\pi_2=(\pi(m+1)-m,\ldots,\pi(2m)-m)$. We construct the
required tangle using
two matrix gadgets, one above the other, 
together with a left reflector and a right reflector (each of width $m$) in
the two spaces between them, as in \cref{f:fish-body}.

Let $\alpha,\beta\in S_m$ be the permutations indexing the upper and lower
matrix gadgets respectively (see the definition of a matrix gadget).  Let
$\rho_1,\rho_2\in S_m$ be the permutations performed by the left reflector
and the right reflector respectively.  Clearly such a tangle performs an
$(m,m)$-split permutation, for any choices of $\alpha,\beta,\rho_1,\rho_2$.
Our task is to choose these permutations so as to perform the required $\pi$.

Recall from \cref{lm:square} that a matrix gadget performs its indexing
permutation on the left and the inverse permutation on the right.  Thus, our
tangle performs $\pi$ if and only if
\begin{equation} \label{eq1}
\alpha\cdot\rho_1\cdot\beta=\pi_1 \text{\quad and\quad}
\alpha^{-1}\cdot\rho_2\cdot\beta^{-1}=\pi_2.
\end{equation}
The first equation gives $\pi_1^{-1}\cdot \alpha\cdot\rho_1=\beta^{-1}$, and
substituting into the second gives
$\alpha^{-1}\cdot\rho_2\cdot\pi_1^{-1}\cdot \alpha\cdot\rho_1=\pi_2$.
Rearranging,
\begin{equation} \label{eq2}
\rho_2\cdot\pi_1^{-1}=\alpha\cdot (\pi_2\cdot\rho_1^{-1})\cdot\alpha^{-1}.
\end{equation}
There exists an $\alpha$ satisfying \eqref{eq2} if and only if the two
permutations $\rho_2\cdot\pi_1^{-1}$ and $\pi_2\cdot\rho_1^{-1}$ are
conjugate.  By \cref{more-cyclic}, for any $\pi_1$, we can choose a
$132$-avoiding $\rho_2$ such that $\rho_2\cdot\pi_1^{-1}$ is cyclic.
Similarly, for any $\pi_2$, we can choose a $213$-avoiding $\rho_1$ such that
$\rho_1\cdot\pi_2^{-1}$ is cyclic, whence the inverse $\pi_2\cdot\rho_1^{-1}$
is cyclic also.  The permutations $\rho_1,\rho_2$ can be performed by the
appropriate reflector gadgets by \cref{r-reflector,l-reflector}.  Thus, the
two permutations mentioned above are both cyclic, and therefore conjugate,
and so we can choose $\alpha$ satisfying \eqref{eq2}.  Finally, we can
compute $\beta=\rho_1^{-1}\cdot\alpha^{-1}\cdot\pi_1$, and \eqref{eq1} will
be satisfied.

The resulting tangle has at most $4$ moves per path: a path has two moves in
each matrix gadget, and these moves continue into the reflectors, since the
gadgets abut each other.
\end{proof}

\begin{proof}[Proof of \cref{fish}]
First consider even $n=2m$.  See \cref{f:fish} for an example.  Using
\cref{splitter}, we first apply a splitter gadget that performs the
permutation $\tau$, where $\tau(1),\ldots,\tau(m)$ are $\pi(1),\ldots,\pi(m)$
in increasing order, and $\tau(m+1),\ldots,\tau(2m)$ are
$\pi(m+1),\ldots,\pi(2m)$ in increasing order.  We then use
\cref{lm:2squares} to obtain a tangle that performs the $(m,m)$-split
permutation $\tau^{-1}\cdot\pi$, and we place this tangle below the splitter.
The splitter adds at most one move to each path.

For odd $n=2m+1$, we modify the construction as shown in \cref{odd-fish}. The
initial splitter separates the paths into sets of sizes $m$ and $m+1$, with
path at the extreme right being $z=\max\{\pi(m+1),\ldots,\pi(2m+1)\}$.  We
then proceed as before for the first $2m$ paths.  Finally, we insert path $z$
into its proper place in $\pi$ by an L-move alongside the South-East side of
the lower matrix gadget. Path $z$ has only $2$ moves, and every other path
still has at most $5$ moves.
\end{proof}
\begin{figure}
\centering
\includegraphics[width=.24\textwidth]{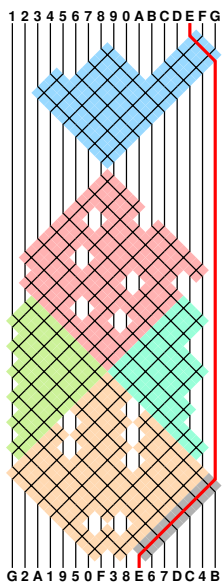}
\caption{The construction for \cref{fish} for odd $n$: the largest element
in the right half of the permutation is routed along the right side.}
\label{odd-fish}
\end{figure}

We remark that the last trick for adding an additional path with only $2$ moves could be iterated, to obtain an inductive construction of larger tangles.
However, in general this would incur a quadratic number of moves in total, for
similar reasons to the construction in \cref{bubble}.

\section{Linear total moves}\label{s:8n}

We begin with another fact about $132$-avoiding permutations.  Write
$[k]:=\{1,\ldots, k\}$.

\begin{lemma}
\label{lm:mixer} If $A,B\subseteq [n]$ have equal cardinality then there
exists a $132$-avoiding $\pi\in S_n$ with $\pi(A)=B$.
\end{lemma}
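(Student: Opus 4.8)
The plan is to prove this by induction on $n$, building the desired $132$-avoiding permutation from the top element down, much as in the proof of \cref{r-reflector}. Write $A=\{a_1<\dots<a_k\}$ and $B=\{b_1<\dots<b_k\}$. The key structural fact about a $132$-avoiding permutation $\pi$ is that if we place the value $n$ at some position $p=\pi^{-1}(n)$, then every value appearing to the left of $n$ must exceed every value appearing to the right of $n$ (otherwise a small value on the left, together with $n$ and an intermediate value on the right, forms a $132$ pattern); moreover, the restrictions of $\pi$ to the positions left of $p$ and right of $p$ must each be $132$-avoiding in their own right. Conversely, any permutation with this block structure (values $\{n-p+1,\dots,n-1\}$ in positions $1,\dots,p-1$, then $n$ in position $p$, then values $\{1,\dots,n-p\}$ in positions $p+1,\dots,n$) in which both blocks are internally $132$-avoiding is itself $132$-avoiding. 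So constructing $\pi$ amounts to choosing, at each stage, where to send the largest remaining value, in a way consistent with a left block of "large" values and a right block of "small" values.

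The main step is to choose the position $p$ for the value $n$. We need $\pi(A)=B$, i.e. the sorted list of positions that get mapped into $B$ is exactly $A$. Since $n=\max B$, the position $p$ assigned value $n$ must lie in $A$ if and only if $n\in B$; more precisely, I would set $p := a_k$ if $b_k = n$, and otherwise (when $b_k<n$, so $n\notin B$) I need $p\notin A$, and a natural choice is the largest index in $[n]\setminus A$. The point of choosing $p$ as large as is consistent with these constraints is to guarantee that the left block has enough "room": the left block occupies positions $1,\dots,p-1$ and must receive the values $\{n-p+1,\dots,n-1\}$, so I need $|A\cap[1,p-1]| = |B\cap\{n-p+1,\dots,n-1\}|$, i.e. the number of elements of $A$ among the first $p-1$ positions equals the number of elements of $B$ in the top $p-1$ values (excluding $n$ if present). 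The hard part of the argument is verifying that a choice of $p$ exists making this count work out, and I expect this to come down to a monotonicity/counting observation: as $p$ increases by $1$, the left count $|A\cap[1,p-1]|$ goes up by $0$ or $1$, and the "target" count $|B\cap[n-p+1,n-1]|$ also goes up by $0$ or $1$, and at the extremes ($p$ small vs. $p$ large) the inequality between them flips, so some valid $p$ exists; one then checks the parity/consistency of the flip actually produces equality rather than jumping over it. This bookkeeping is the genuine obstacle; everything else is routine.

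Granting the existence of a suitable $p$, I recurse: the left block is an instance of the lemma on $p-1$ elements with $A_L := A\cap[1,p-1]$ and $B_L := \{b\in B : b\geq n-p+1\}\setminus\{n\}$, relabeled to lie in $[p-1]$ by subtracting $n-p+1$ from the values; the right block is an instance on $n-p$ elements with $A_R := (A\cap[p+1,n])-p$ and $B_R := B\cap[1,n-p]$. The inductive hypothesis supplies $132$-avoiding permutations on each block with the required image behavior, and reassembling them around the value $n$ at position $p$ yields a $132$-avoiding $\pi\in S_n$ with $\pi(A)=B$ by the converse direction of the structural fact above. The base case $n=0$ (or $n=1$) is trivial. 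I would present the counting verification as the one place deserving care, and keep the reassembly step brief since it is exactly the $132$-avoidance criterion already used implicitly in the reflector lemma.
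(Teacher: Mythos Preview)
Your approach is correct and genuinely different from the paper's. Both proofs are inductive, but the paper proceeds by a four-case analysis depending on which of the pairs $(n,1)$, $(1,n)$, $(n,n)$ is ``conforming'' (i.e.\ lies in $A\times B$ or its complement), setting $\pi(n)=1$, $\pi(1)=n$, or $\pi(n)=n$ in the first three cases and only in the residual Case~4 (where $1,n\in A$ and $1,n\notin B$) invoking an intermediate-value argument to find a block split $k$ with $|A\cap[k]|=|B\cap[n-k+1,n]|$. Crucially, in the paper's Case~4 the split point $k$ need \emph{not} be the position of $n$; both blocks are handled by recursion. Your approach is more uniform---always place the value $n$ at a position $p$ and recurse on the two blocks---but pays for this by needing a sharper existence statement: you need $p$ satisfying \emph{both} the cardinality match $|A\cap[1,p-1]|=|B\cap[n-p+1,n-1]|$ and the membership constraint $p\in A\Leftrightarrow n\in B$ simultaneously. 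You correctly flag this as the crux.

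Your sketch of the IVT step is slightly off-target: the difference $h(p)$ changes by $[p\in A]-[n{-}p\in B]\in\{-1,0,1\}$, so there is no ``jumping over'' issue, and $h(1)=0$ always, so zeros of $h$ are plentiful. The genuine difficulty is the membership constraint, which you do not explicitly address. It does work, via a first-crossing argument: with $H(p):=|A\cap[1,p]|-|B\cap[n-p+1,n]|$ one has $H(0)=H(n)=0$, and the needed condition is $H(p)=0$ with $[p\in A]=[n\in B]$. If $n\notin B$ and $1\notin A$ then $p=1$ works; if $n\notin B$ and $1\in A$ then $H(1)=1$, and at the \emph{first} $p\ge 2$ with $H(p)=0$ the step $H(p)-H(p{-}1)$ equals $-1$, forcing $p\notin A$. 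The case $n\in B$ is symmetric. (Your concrete suggestions $p=a_k$ or $p=\max([n]\setminus A)$ do not work in general, as you suspected.) There is also a harmless off-by-one in your relabeling of $B_L$: subtracting $n-p+1$ lands in $[0,p-2]$, not $[1,p-1]$; you want to subtract $n-p$.
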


\begin{proof}
The proof is by induction on $n$. If $n = 1$ then the claim is obvious.
Suppose that the theorem holds for all $n'<n$.  We will deduce it for $n$. A
pair $(i,j)$ is called \df{conforming} if either $i\in A$ and  $j\in B$, or
$i\notin A$ and $j\notin B$.  (In other words, if we are allowed to assign
$\pi(i)=j$).  We consider several cases.

\paragraph{Case 1.} Pair $(n,1)$ is conforming.  Without loss of generality,
suppose that $n\notin A$ and $1\notin B$; otherwise take complements of $A$
and $B$. Consider the set $B-1:=\{i -1 : i \in B\}$.  By the induction
hypothesis, there exists a $132$-avoiding $\sigma \in S_{n-1}$ with
$\sigma(A)=B-1$. Define $\pi\in S_n$ by setting $\pi(n)=1$, and
$\pi(i)=\sigma(i)+1$ for $i<n$.  Then $\pi$ is 132-avoiding, and maps $A$ to
$B$, as required.

\paragraph{Case 2.} Pair $(1,n)$ is conforming. Without loss of generality,
$1 \notin A$ and $n \notin B$. Consider $A-1:=\{i -1 : i \in A\}$. By the
induction hypothesis there exists a $132$-avoiding $\sigma \in S_{n-1}$ with
$\sigma(A-1)=B$. Define $\pi\in S_n$ by $\pi(1)=n$ and $\pi(i)=\sigma(i - 1)$
for $i > 1$.

\sloppypar \paragraph{Case 3.} Pair $(n,n)$ is conforming.  Apply the
inductive hypothesis to $1,\ldots,n-1$ and set $\pi(n) = n$.

\paragraph{Case 4.} None of the pairs $(n,1)$, $(1,n)$, $(n,n)$ is conforming.
Without loss of generality, $1 \in A$. Then $n\notin B$ because $(1,n)$ is
not conforming. Then $n\in A$ because $(n,n)$ is not conforming. Then
$1\notin B$ because $(n,1)$ is not conforming.  In summary, we have $1,n\in
A$ but $1,n\notin B$.

\medskip
We claim that there exists an integer $k$ with $2\leq k\leq n-2$ such that
$|A \cap [k]| = |B \cap ( [n] \setminus [n - k])|$.  Indeed, we have $|A \cap
[1]| = 1 > 0 = |B \cap ( [n] \setminus [n - 1])|$, whereas $|A \cap [n-1]| =
|A|-1 < |B| = |B \cap ( [n] \setminus [1])|$; but the difference $|A \cap
[j]| - |B \cap ( [n] \setminus [n - j])|$ decreases by at most $1$ as $j$ is
increased by $1$; thus it must be $0$ for some $j$.

Let $A'=A \cap [k]$ and $B'=(B\cap ( [n] \setminus [n-k]))-(n-k)$. By the
induction hypothesis, (since $k<n$) there exists a $132$-avoiding $\pi_1 \in
S_{k}$ with $\pi_1(A')=B'$.   Let $A''=(A \cap ( [n] \setminus [k])) - i$ and
$B''=B\cap [n-k]$. By the induction hypothesis, (since $n-k<n$) there exists
a $132$-avoiding $\pi_2 \in S_{n-k}$ with $\pi_2(A'')=B''$.  We define $\pi$ by
setting $\pi(j) = \pi_1(j)+n-k$ for $j\le k$, and $\pi(j) = \pi_2(j-k)$ for
$j > k$. This $\pi$ is $132$-avoiding: if $u<v<w$ form a $132$ pattern, then we
cannot have all three in $[k]$ or all three in $[n]\setminus [k]$.  On the
other hand, we cannot have $u\leq k < w$: indeed, for all $i\leq k<j$ we have
$\pi(i)>n-k\geq \pi(j)$.
\end{proof}

The following is a major ingredient of the proof of \cref{8n}.

\begin{prop}\label{c-gadget}
Let $\pi\in S_n$ be a $(\lceil n/2\rceil,\lfloor n/2\rfloor)$-split
permutation.  The permutation $\pi$ can be performed by a tangle all of whose
swaps are within the triangular region $\{(x,t): -x< t < x\}$.  The tangle
accepts $n$ paths running in the South-East direction on its North-West edge,
and outputs them running in the South-East direction on its South-East edge,
and has at most $4n$ moves including these input and output segments.
\end{prop}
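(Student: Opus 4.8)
The plan is to prove a triangular, split-permutation analogue of \cref{lm:2squares}. Since $\pi$ is $(\lceil n/2\rceil,\lfloor n/2\rfloor)$-split, write its two parts as $\pi_1$ and $\pi_2$; I would first reduce to even $n=2m$. For odd $n$ one can peel off the single extra element of the larger block, route its path by itself along the South-West side of the figure with two moves (exactly as in the odd case of the proof of \cref{fish}), or equivalently use a truncated matrix gadget; this leaves $\pi_1,\pi_2\in S_m$.

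For the core I would reuse the algebraic skeleton of \cref{lm:2squares}: two matrix gadgets contributing the pairs $(\alpha,\alpha^{-1})$ and $(\beta,\beta^{-1})$ on the two halves, together with a left and a right reflector contributing a $213$-avoiding $\rho_1$ and a $132$-avoiding $\rho_2$, and solve $\alpha\rho_1\beta=\pi_1$ and $\alpha^{-1}\rho_2\beta^{-1}=\pi_2$ exactly as there — choose $\rho_2$ with $\rho_2\pi_1^{-1}$ cyclic and $\rho_1$ with $\rho_1\pi_2^{-1}$ cyclic using \cref{more-cyclic}, realize them by reflectors via \cref{r-reflector,l-reflector}, take $\alpha$ conjugating the two cyclic permutations, and set $\beta=\rho_1^{-1}\alpha^{-1}\pi_1$. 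The genuinely new content is geometric: matrix gadgets are $45^\circ$ diamonds, whereas reflectors are wedges of precisely the shape $\{-x<t<x\}$, so I would place the diamonds near the apex of the triangle and let the reflectors fill the wedge-shaped regions around them, translating each gadget (and choosing the Dyck-path interfaces of the reflectors, and any truncations of the diamonds) so that the union lies inside $\{-x<t<x\}$, so that every path crosses the North-West edge while performing an R-move and leaves across the South-East edge while performing an R-move, and so that abutting gadgets share boundaries and their moves coalesce. The place where the triangular shape forces something new is that a horizontal slice of $\{-x<t<x\}$ at time $t$ is the half-line of positions exceeding $|t|$, not a full interval; hence near the apex only a shrinking set of positions is available, and to bring exactly the right paths into exactly the available positions — so that they can pass through the matrix gadgets and out the far edge without an extra move — one invokes \cref{lm:mixer}, which supplies a $132$-avoiding permutation mapping one prescribed subset onto another and which is then performed by a reflector.

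The main obstacle is this geometric bookkeeping together with the move count: one must check that the diamonds and wedges can be simultaneously translated into $\{-x<t<x\}$ realizing the prescribed South-East entry and exit, that \cref{lm:mixer} really does provide compatible reflector permutations at each stage, and that moves coalesce across abutting gadgets so that each path still has at most $4$ moves in total (whence at most $4n$ overall, counting the entry and exit segments, with paths near the apex contributing fewer). The underlying algebra is a routine adaptation of \cref{lm:2squares}, so essentially all the difficulty is in getting the figure right.
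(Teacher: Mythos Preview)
Your proposal takes a genuinely different route from the paper, and it has a real geometric obstruction.

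The paper's proof is \emph{recursive}, not a direct adaptation of \cref{lm:2squares}. It uses a \emph{single} matrix gadget $M$ of width $n$ (indexed by $\pi$ restricted to the left half), a \emph{single} right reflector $R$ of width $\lfloor n/2\rfloor$ placed to the North-East of $M$, and then a recursively constructed copy $C'$ of the same gadget, of width $\lfloor n/2\rfloor$, placed to the South-East. The role of \cref{lm:mixer} is precise and essential: after $M$ acts, the right half carries some permutation determined by $M$; the reflector $R$ must then rearrange the right half so that the last quarter is mapped onto the required set $\pi(Q)$, which is exactly a subset-to-subset condition, and \cref{lm:mixer} guarantees a $132$-avoiding permutation achieving it. The remaining permutation on the right half is then $(\lceil m/2\rceil,\lfloor m/2\rfloor)$-split, so $C'$ handles it by recursion. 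The move bound comes from the recurrence $A(n)\le 2n + A(\lfloor n/2\rfloor)$, giving $A(n)\le 4n$; it is \emph{not} a per-path bound, and indeed the paper remarks afterwards that some paths in this gadget make $\Theta(\log n)$ moves.

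Your plan, by contrast, tries to transplant the two-matrix ``fish'' of \cref{lm:2squares} into the wedge $\{-x<t<x\}$. That cannot be made to fit: the fish consists of two width-$n$ diamonds stacked vertically, so near position $x=1$ (the apex of the wedge) the construction needs vertical extent on the order of $n$, whereas the wedge allows only $|t|<1$ there. A single width-$n$ diamond centred at $t=0$ fits snugly (its vertical extent at position $x$ is $2x$, matching the wedge), but a second, vertically translated diamond inevitably protrudes. Your mention of \cref{lm:mixer} as a fix for ``bringing the right paths into the available positions'' is not the same use as in the paper and does not resolve this packing problem. The conjugacy/cyclic machinery of \cref{more-cyclic} is simply not what is needed here; the paper never invokes it in this proof. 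Finally, your claimed bound of at most $4$ moves per path would be strictly stronger than what the paper obtains for this gadget, and is another symptom that the two-matrix picture is not the right one for the triangular region.
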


\begin{figure}
\centering
\begin{subfigure}[t]{.4\textwidth}
\centering
\begin{tikzpicture}[scale=.25]
\filldraw[fill=red!20] (0,0)--(6,6)--(0,12)--(-6,6)--cycle;
\filldraw[fill=green!80!black!20] (0,12)--(6,18)--(6,6)--cycle;
\filldraw[fill=black!30] (0,0)--(6,6)--(6,-6)--cycle;
\foreach \x in {.5,...,11.5}
 \draw (-6+\x,6+\x)--(-7+\x,7+\x);
\foreach \x in {.5,...,11.5}
 \draw (-6+\x,6-\x)--(-7+\x,5-\x);
\node at (0,6){$M$};
\node at (3,12){$R$};
\node at (3,0){$C'$};
\draw[very thick,red] (-5.5,8.5)--(-1.5,4.5)--(-1.5,3.5)--(-3.5,1.5);
\draw[very thick,green!70!black] (2.5,16.5)--(5.5,13.5)--(5.5,8.5)--(2.5,5.5)--(2.5,4.5)--(3.5,3.5);
\end{tikzpicture}
\caption{Construction of the tangle $C$ in \cref{c-gadget},
comprising a right reflector, a matrix, and a recursively-constructed version $C'$
of itself.}\label{f:c-gadget}
\end{subfigure}
\hfill
\begin{subfigure}[t]{.25\textwidth}
\centering
\includegraphics[width=\textwidth]{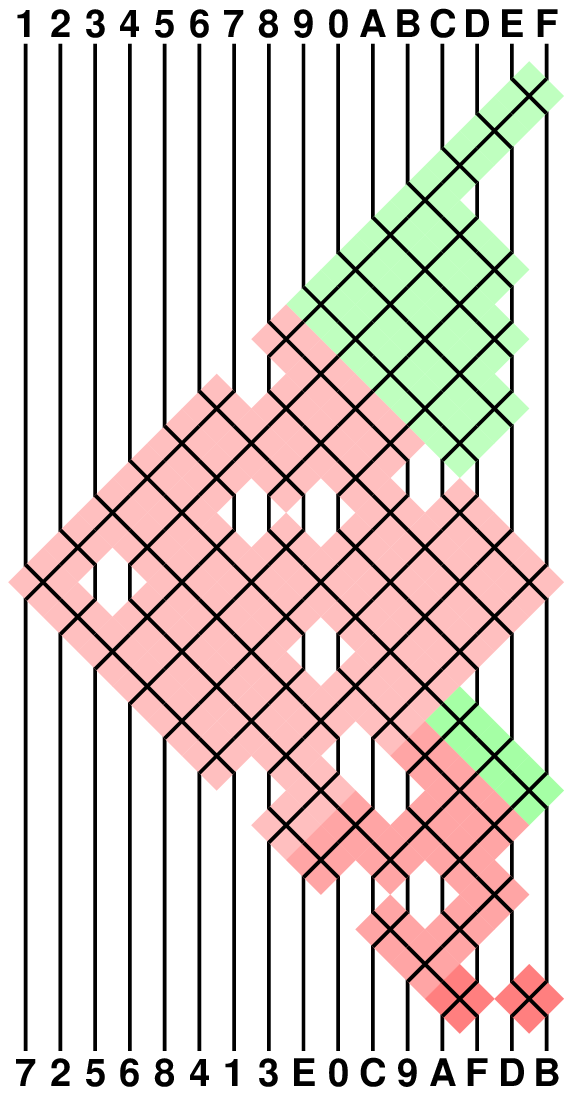}
\caption{An example for even $n$.}\label{f:c-gadget-even}
\end{subfigure}
\hfill
\begin{subfigure}[t]{.25\textwidth}
\centering
\includegraphics[width=\textwidth]{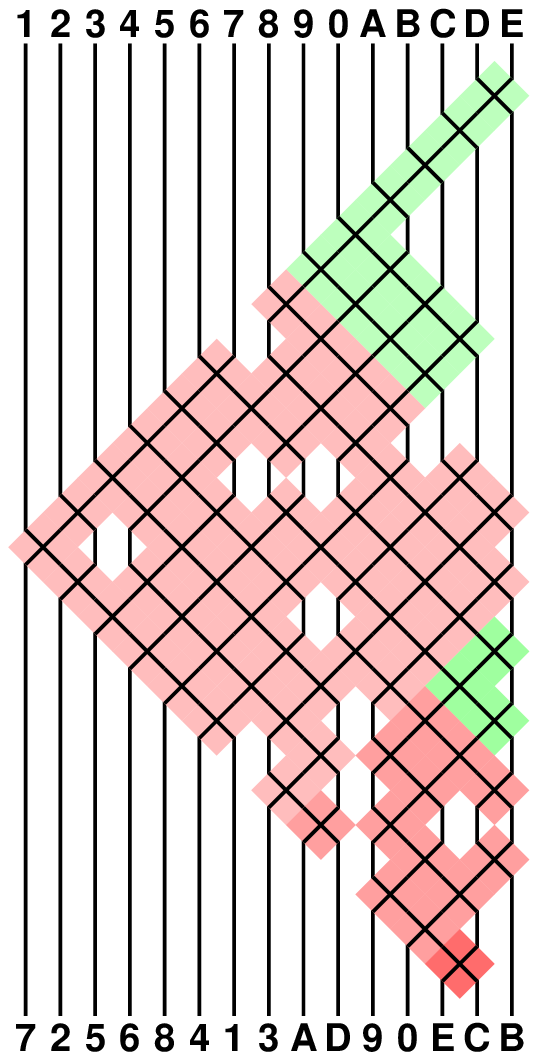}
\caption{An example for odd $n$.}\label{f:c-gadget-odd}
\end{subfigure}
\caption{}
\end{figure}
\begin{proof}
We first assume that $n=2m$ is even, so $\pi$ is $(m,m)$-split.  The
construction of the required tangle $C$ is recursive: it consists of a matrix
gadget $M$, together with a right reflector $R$ of width $m$ placed to the North-East
of the matrix, and a smaller, recursively-constructed version $C'$ of itself
(performing a suitable permutation of size $m$) placed to the South-East of
the matrix.  See \cref{f:c-gadget}.

We now explain how to choose the gadgets.  Let $\rho,\mu\in S_{n}$ be the
permutations performed by the right reflector $R$ (when translated to the
right half $[m+1,n]$) and the matrix gadget $M$, respectively.  Since the
right-reflector does not affect positions in the left half $[1,m]$, we
require that $[\mu(1),\ldots,\mu(m)]=[\pi(1),\ldots,\pi(m)](\in S_m)$.
Therefore we choose the matrix gadget to be indexed by this last permutation.
Now consider the right half. The tangle $C'$ can perform any desired $(\lceil
m/2\rceil,\lfloor m/2\rfloor)$-split permutation on positions
$m+1,\ldots,2m$. Therefore, letting $Q=[m+\lceil m/2\rceil+1,n]$ be the set
of positions in the last quarter of $[1,n]$, we need to choose $\rho$ so that
$\rho\cdot\mu(Q)=\pi(Q)$.  Since $|\mu(Q)|=|\pi(Q)|$, by
\cref{lm:mixer,r-reflector}, there is a right reflector that achieves this.

In the case when $n=2m+1$ is odd, the construction is
modified as follows. The matrix gadget is replaced with a
truncated version (with the rightmost swap deleted), so
that we may choose it to perform the required permutation
on positions $1,\ldots,m+1$.

Finally, we count moves.  Suppose that all paths start running in the
South-East direction.  Then each path makes at most 2 moves in the reflector
together with the matrix, including the input path, but not including the
final R-move in the case of the paths in the right half.  Since these moves
continue into $C'$, writing $A(n)$ for the maximum number of moves required
by our construction for a permutation of size $n$, we have
$$A(n)\leq 2n+A\bigl(\lfloor n/2 \rfloor\bigr).$$
By induction, $A(n)\leq 4n$.
\end{proof}

\begin{figure}
\centering
\begin{tikzpicture}[scale=.5]
\filldraw[fill=yellow!40] (0,0) -- (3,3) -- (0,6) -- (-3,3) --cycle;
\filldraw[fill=orange!40] (-6,0) -- (-3,-3) -- (0,0) -- (-3,3) --cycle;
\filldraw[fill=blue!30] (0,0) -- (-6,-6) .. controls (-2,-4) and (2,-8) .. (6,-6) --cycle;
\filldraw[fill=cyan!30] (3,3) -- (0,6) .. controls (2,7) and (4,5) .. (6,6) --cycle;
\filldraw[fill=cyan!30] (-3,3) -- (-6,6) .. controls (-4,7) and (-2,5) .. (0,6) --cycle;
\filldraw[fill=black!20] (0,0)--(6,6)--(6,-6)--cycle;
\node at (-3,0) {$M_2$};
\node at (0,3) {$M_1$};
\node at (3.5,0) {$C$};
\node at (-3,4.75) {$S_1$};
\node at (3,4.75) {$S_2$};
\node at (0,-3.5) {$G$};
\foreach \x in {-6,-3,0,3}
\draw[thick,<->] (\x+.1,7)--(\x+2.9,7) node[midway,above]{$m$};
\end{tikzpicture}
\caption{Construction for \cref{8n}: splitters $S_1,S_2$, matrix gadgets $M_1,M_2$, merger $G$,
and a tangle $C$ from \cref{c-gadget}.}
\label{f:8n-construction}
\end{figure}
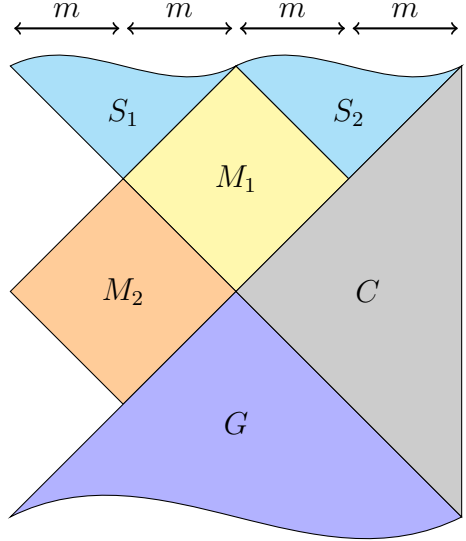
\begin{proof}[Proof of \cref{8n}]
The construction is illustrated in \cref{f:8n-construction}, and \cref{f:8n}
is an example. We first assume that $n$ is a multiple of $4$, and write
$n=4m$. As shown in \cref{f:8n-construction}, the tangle finishes with a
merger $G$ that (by \cref{merger}) intersperses the paths in locations
$1,\ldots, 2m$ with those in $2m+1,\ldots, 4m$ in an arbitrary way while
maintaining the relative order of each.  Therefore, the remainder of the
tangle (above the merger) needs to perform an arbitrary $(2m,2m)$-split
permutation.  On the other hand, the tangle starts with two splitters $S_1$
and $S_2$, placed in the first and second halves.  By \cref{splitter} each of
these splitters can map any desired set of paths into its own first half (of
width $m$). Therefore, the task for the remaining portion of the tangle
(i.e.\ everything apart from the merger and the two splitters) is to perform
an arbitrary $(m,m,m,m)$-split permutation.

The remainder of the tangle is composed of two matrix gadgets, together with
a tangle constructed via \cref{c-gadget}.  Both matrix gadgets have width
$2m$.  The upper matrix gadget $M_1$ occupies the middle half $[m+1,3m]$ of
$[1,n]$. The other matrix gadget, $M_2$, abuts $M_1$ to the South-West and
occupies the first half $[1,2m]$.  The tangle $C$ from \cref{c-gadget} also
has width $2m$, and is located on the right, partially abutting $M_1$.

We now explain how to choose these gadgets.  The matrix gadget $M_2$ is
chosen so as to perform the required permutation in the first quarter
$[1,m]$.
Then $M_1$ chosen so that the required permutation in the second quarter
$[m+1,2m]$ is performed by the left half of $M_1$ composed with the right half of $M_2$.
 Finally, $C$ needs to perform an arbitrary $(m,m)$-split permutation (on positions
$[2m+1,4m]$).  This can be achieved, by \cref{c-gadget}.

We now count moves.  We first total the moves within each component.  When
two components abut each along a common boundary, the moves crossing this
boundary will be double-counted.  Therefore we then subtract a term
corresponding to the total length of the common boundaries.  The upper
splitters each contribute $2m$ moves; the two matrix gadgets each contribute
$4m$ moves; the final merger contributes $4m$ moves; and the tangle $C$
contributes $4(n/2)=2n$ moves, by \cref{c-gadget}.  The total over-counting
from common boundaries is $m+m+3m+3m$.  Therefore, there are at most
$24m-8m=16m=4n$ moves.

\begin{figure}
\begin{subfigure}[b]{.24\textwidth}\centering
\includegraphics[width=\textwidth]{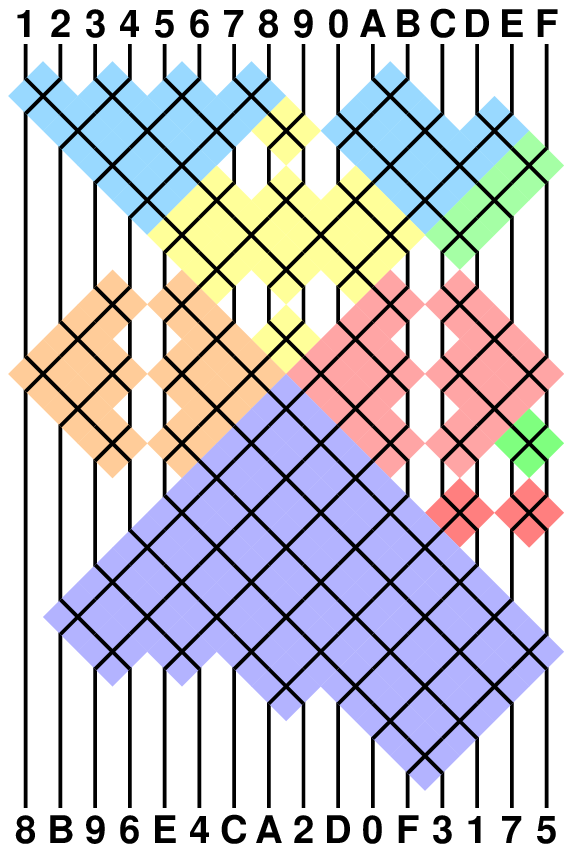}
\caption{$n\equiv 0\mod 4$.}
\end{subfigure}
\hfill
\begin{subfigure}[b]{.24\textwidth}\centering
\includegraphics[width=\textwidth]{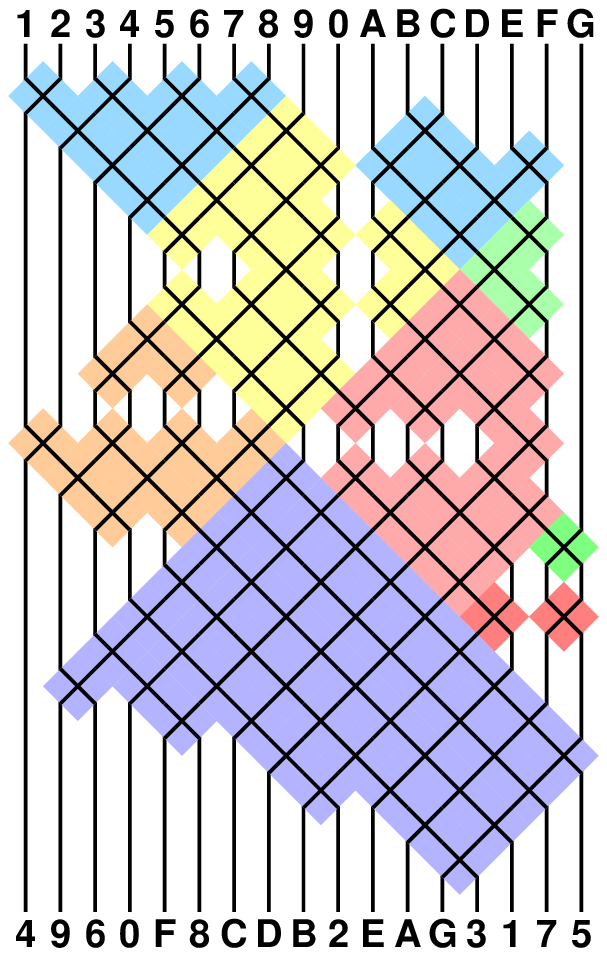}
\caption{$n\equiv 1\mod 4$.}
\end{subfigure}
\hfill
\begin{subfigure}[b]{.24\textwidth}\centering
\includegraphics[width=\textwidth]{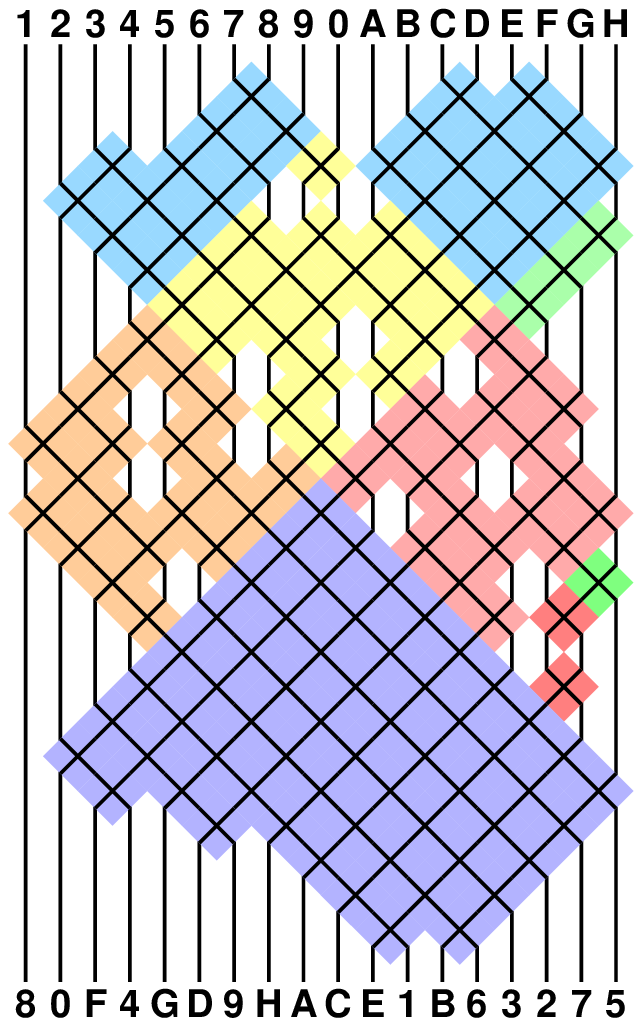}
\caption{$n\equiv 2\mod 4$.}
\end{subfigure}
\hfill
\begin{subfigure}[b]{.24\textwidth}\centering
\includegraphics[width=\textwidth]{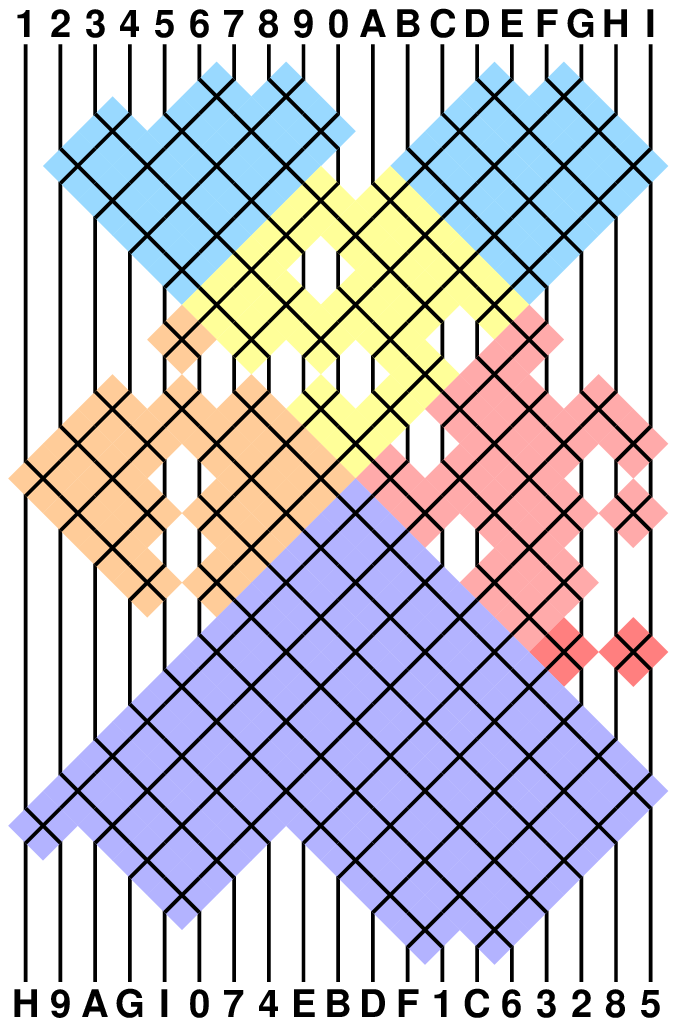}
\caption{$n\equiv 3\mod 4$.}
\end{subfigure}
\caption{Variations of the construction for \cref{8n},
according to the congruence class of $n$.}
\label{mod4}
\end{figure}

Finally, we describe how the construction is adjusted when $n$ is not a
multiple of $4$.  Let $n=4m+r$ where $m$ is an integer and $r\in\{0,1,2,3\}$.
Depending on the value of $r$, we choose a suitable splitting into quarters,
and use carefully chosen truncated matrix gadgets.  The splitters and merger
are adjusted to that the remaining central section of the tangle must perform
a permutation that is split as follows:
\begin{equation*}
\begin{array}{rllll}
r=0:&\quad (m,&m,&m,&m)\\
r=1:&\quad (m,&m,&m+1,&m)\\
r=2:&\quad (m,&m+1,&m+1,&m)\\
r=3:&\quad (m+1,&m+1,&m+1,&m).
\end{array}
\end{equation*}
The case $r=0$ was described above.  In the case $r=1$, the matrix gadget
$M_1$ is not truncated, but has width $2(m+1)$, and is chosen to have no swap
in its leftmost column.  In the case $r=2$, the matrix $M_2$ is truncated on
its left side.  In the case $r=3$, both matrices have width $2(m+1)$, and
neither is truncated. For each of $r=1,2,3$, the tangle $C$ has odd width,
and performs a $(m+1,m)$-split permutation, as stated in \cref{c-gadget}.
These choices ensure that the various components can still abut each other
without introducing extra moves at the boundaries.  See \cref{mod4} for examples.
\end{proof}

We remark that, in the above construction, while the average number of moves
per path is only $4$, some paths may have as many as $\Theta(\log n)$ moves
-- this is a consequence of the recursive construction in \cref{c-gadget}.

\section{Cluster bound}
\label{s:clusters}

In this section we prove \cref{counting}.  Recall that swaps are located at
elements of the integer lattice $\ZZ^2$, and thus corners are located at
elements of $(\ZZ+\tfrac 12)^2$.  Recall that a cluster is a connected
component of the graph whose vertices are corners, and with an edge between
two corners if their locations are within $\ell^\infty$-distance $1$.  See
\cref{f:clusters} for an example.
\begin{figure}
\centering
\includegraphics[width=.4\textwidth]{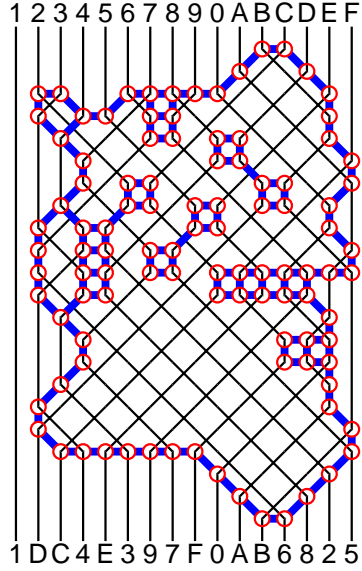}
\caption{Corners and clusters (for a tangle constructed according to the proof of \cref{8n}).
Corners are circled, and corners connected by thick lines belong to the same cluster.
 There are three clusters.}
\label{f:clusters}
\end{figure}

We start with a standard estimate for counting clusters. Let $\ZZ^2_*$ be the
graph with vertex set $\ZZ^2$ and an edge between any two elements that are
at $\ell^\infty$-distance $1$ from each other.  By a \df{$*$-animal} we mean
a finite subset of $\ZZ^2$ that induces a connected subgraph of $\ZZ^2_*$.
The \df{size} of a $*$-animal is the number of its vertices. Two $*$-animals
are said to be \df{equivalent} if one can be obtained from the other by a
translation of $\ZZ^2$.

\begin{lemma}\label{animals}
The number of equivalence classes of $*$-animals of size $m$ is at most
$A^m$, where $A=7^7/6^6$.
\end{lemma}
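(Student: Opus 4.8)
The standard route is to count $*$-animals by a clever encoding that yields a bound of the form $A^m$ rather than the naive $9^m$ (or $8^m$) one gets from ``each new cell has at most $8$ neighbours''. The cleanest approach is the \emph{spanning-tree / degree-weighting} argument, which is where the unusual constant $A = 7^7/6^6$ comes from. First I would recall that $\ZZ^2_*$ is an $8$-regular graph, so in any $*$-animal $S$ every vertex has at most $8$ neighbours in $\ZZ^2_*$; the key point is that we want a multiplicative cost of roughly $7$ per cell, amortized. I would fix a canonical ordering of the $8$ directions in $\ZZ^2_*$, and describe each equivalence class of $*$-animals by a spanning tree of the induced subgraph together with, at each vertex, the list of which incident tree-edges go in which direction.

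The core estimate is a weighted count of rooted trees with bounded degree. Concretely: a $*$-animal of size $m$ has a spanning tree $\mathcal{T}$ with $m-1$ edges; root it at the (translation-canonical) lexicographically-least vertex. Walk $\mathcal{T}$ in depth-first order; at each vertex, the set of children is encoded by choosing a subset of the remaining available directions (at most $7$ of them, since the direction back to the parent is used up — for the root, at most $8$, but that is a single bounded factor absorbed into the constant for $m$ large, or handled by noting the root contributes at most a factor that does not grow with $m$). Each non-root vertex is ``reached'' once along a tree edge. The number of such encodings is bounded by a product over vertices $v$ of $\binom{7}{d(v)-1}$ or similar, where $d(v)$ is the tree-degree, subject to $\sum_v (d(v)-1) = m-1$ roughly; maximizing $\prod \binom{7}{k_v}$ subject to $\sum k_v$ fixed, or more precisely optimizing the per-cell branching, gives the bound $7^7/6^6$ per cell. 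The function $x \mapsto $ (number of ways to distribute out-degrees) is maximized, via a standard Lagrange/convexity computation, exactly when the ``branching factor'' balances so that the geometric mean per edge is $7/(6/7)$-type — i.e.\ $(1 + 1/6)^7 \cdot$ (correction) $= 7^7/6^6$. I would cite this as a known bound on site-animals / polyominoes in the $*$-connected (king-move) lattice; e.g.\ it is the elementary upper bound of Klarner–Eden type, and appears in the percolation literature (Grimmett's book) precisely in the form $A^m$ with $A = 7^7/6^6$ for the $\ell^\infty$-lattice.

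The main obstacle is making the amortization rigorous: a loose ``$7$ choices per cell'' gives $7^{m-1}$, but the sharper constant $7^7/6^6 \approx 6.90$ requires the observation that not \emph{every} new cell can be reached by a fresh direction from a high-degree vertex — the out-degrees must sum to $m-1$, and the number of ways to choose an out-degree-$k$ subset of $7$ directions is $\binom{7}{k}$, and $\binom{7}{k}^{1/k}$ is maximized over $k \geq 1$ at $k=1$ giving $7$, but the \emph{correct} per-cell cost comes from optimizing $\prod_v \binom{7}{k_v}$ over all sequences $(k_v)$ with $\sum_v k_v = m-1$ and $k_v \in \{0,\dots,7\}$; by the multiplicative form of Lagrange optimization (take logs, use concavity of the relevant generating function) the supremum of the geometric mean is $\bigl(\sum_{k=0}^{7}\binom{7}{k} x^k\bigr)$-type evaluated at the critical $x$, which works out to $7^7/6^6$. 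I would either present this optimization in a short lemma (a one-line calculus exercise) or, more economically, simply invoke the classical result. Since the paper only needs \emph{some} bound of the form $A^m$ with $A < 7$ for the subsequent counting argument in \cref{counting}, I would keep the proof short: state the spanning-tree encoding, note the per-cell cost is at most $7^7/6^6$ by the standard optimization, and cite a reference (e.g.\ Grimmett, \emph{Percolation}) for the extremal constant, remarking that any constant in $(1,9)$ would suffice but this one is clean.

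\begin{proof}[Proof sketch of \cref{animals}]
Fix the $8$ ``king-move'' directions in $\ZZ^2_*$ in some canonical order. Given a $*$-animal $S$ of size $m$, choose a spanning tree $\mathcal{T}$ of the subgraph of $\ZZ^2_*$ induced by $S$, and root it at the lexicographically minimal vertex of $S$; this makes the rooted tree independent of translations, so it is an invariant of the equivalence class. Perform a depth-first traversal of $\mathcal{T}$: each vertex $v$ is visited once, reached (except for the root) along the unique tree-edge to its parent, so at most $7$ directions remain available at $v$ for its children (at most $8$ at the root). Encoding $\mathcal{T}$ amounts to specifying, at each $v$, which subset of the available directions leads to a child; if $v$ has $k_v$ children then there are at most $\binom{7}{k_v}$ choices (at most $\binom{8}{k_{\text{root}}}$ at the root). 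Since $\sum_v k_v = m-1$, the number of encodings is at most
\[
8\cdot\prod_{v}\binom{7}{k_v}\ \leq\ 8\cdot\Bigl(\tfrac{7^7}{6^6}\Bigr)^{\,m-1},
\]
where the last inequality is the standard optimization of $\prod_v\binom{7}{k_v}$ subject to $\sum_v k_v = m-1$ (take logarithms and use concavity; the per-edge geometric-mean cost is maximized at the value $7^7/6^6$; see e.g.\ Grimmett, \emph{Percolation}). Absorbing the bounded factor, the number of equivalence classes of $*$-animals of size $m$ is at most $A^m$ with $A = 7^7/6^6$, as claimed. (In fact any constant strictly between $1$ and $9$ would suffice for our application of this lemma.)
\end{proof}
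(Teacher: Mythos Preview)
Your approach --- a spanning-tree/DFS encoding in the spirit of Eden and Klarner --- is exactly what the paper invokes (its entire proof is ``Apply the argument of Eden, adapted to the $*$ lattice. See also Klarner.''). However, your sketch contains two genuine slips that would need fixing.

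First, the arithmetic: $7^7/6^6=7\cdot(7/6)^6\approx 17.65$, not $6.90$. So your claim that this is a ``sharper constant'' than the naive $7$ is backwards --- the Eden bound is \emph{weaker} than $7^m$, not stronger. (The value $6.75=3^3/2^2$ you may be remembering is the analogous bound for the ordinary square lattice $\ZZ^2$, which has degree $4$.) Second, and more substantively, the displayed step is muddled: the ``number of encodings'' is not a single product $\prod_v\binom{7}{k_v}$ for some fixed $(k_v)$, but a \emph{sum} over all degree sequences with $\sum_v k_v=m-1$, namely
\[
\sum_{k_1+\cdots+k_m=m-1}\ \prod_{i=1}^m\binom{7}{k_i}\;=\;[x^{m-1}](1+x)^{7m}\;=\;\binom{7m}{m-1}.
\]
The constant $7^7/6^6$ then arises from the elementary entropy bound $\binom{7m}{m-1}\le\binom{7m}{m}\le(7m)^{7m}\big/\bigl(m^m(6m)^{6m}\bigr)=(7^7/6^6)^m$, not from ``optimizing $\prod_v\binom{7}{k_v}$ subject to $\sum_v k_v=m-1$'' --- indeed, the maximum of that product is only $7^{m-1}$ (attained when $m-1$ of the $k_v$ equal $1$), which would contradict your claimed inequality if $7^7/6^6$ really were below $7$. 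With these two corrections, and a word making the spanning tree canonical (e.g.\ the DFS tree from the lexicographically least cell, exploring neighbours in a fixed direction order; note this cell has at most four neighbours in $S$, so the root factor is already $\le\binom{7}{k_1}$ and no extra $8$ is needed), your argument becomes a correct expansion of what the paper cites.
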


\begin{proof}
Apply the argument of Eden \cite{eden}, adapted to the $*$ lattice.   See
also e.g.~\cite{klarner}.
\end{proof}

\begin{proof}[Proof of Theorem~\ref{counting}]
Fix $\theta\in(0,\tfrac12)$ and $n>\theta^{-8/\theta}$.  Suppose for a
contradiction that there are at least $e^{-n}n!$ distinct permutations
$\pi\in S_n$ each of which has a tangle $T_\pi$ with fewer than
$K:=(\tfrac12-\theta)n$ clusters and fewer than $C:= \tfrac16 \theta n
\log n$ corners.

For any tangle $T$, suppose that there are no corners at
the time $t+\tfrac12$.  It is easy to check that all segments must be vertical
at the point, so the three permutations corresponding to
times $t-\tfrac12,t+\tfrac12,t+\tfrac32$ are all equal.
Therefore we can remove one of these permutations from the
sequence to obtain a new tangle. This operation preserves
the number of corners, and does not increase the number of
clusters.  We can therefore assume that each of the tangles
$T_\pi$ defined above has depth at most equal to its number
of corners.  We may further assume that the time of the
first corner is $\tfrac12$. Therefore all corners are
within a fixed rectangle $R$ of area $Cn$.  (Recall that
there are at most $C$ corners).

If we are given the set of locations of corners of a
tangle, together with the directions of the two incident
path segments at each corner, then we can recover the
tangle. At any given corner there are at most $3^2-3=6$
possible choices for this pair of directions.

We now bound from above the number of possible tangles $T_\pi$.  A cluster of
size $m$ corresponds to a $*$-animal together with a location in the
rectangle $R$.  Therefore the number of possible tangles is at most
$$\sum_{m_1,\ldots, m_k} \prod_{i=1}^k \bigl( A^{m_i}  6^{m_i} Cn\bigr),$$
where the sum is over all sequences $(m_i)_{i=1,\ldots k}$ with $k\leq K$,
and $m_i\geq 1$ and $\sum_i m_i\leq C$, and where $A$ is the constant from
Lemma~\ref{animals}. The number of choices of such $(m_i)_{i=1,\ldots ,k}$ is
at most $2^{C}$, so the above expression is at most $(Cn)^K (12A)^C$.

Since each $T_\pi$ corresponds to a different permutation $\pi$, we have
$$e^{-n} n! \leq (Cn)^K (12A)^C.$$
Taking logarithms, substituting for $C$ and $K$, and using $\log (n!)>n\log
n- n$, we obtain
$$n \log n - 2n \leq (\tfrac12 -\theta)n \log(Cn) +
\tfrac16 \theta n \log n \log(12 A).$$
Using $\log(12 A)<6$ and simplifying gives
$$ \tfrac12 \log n -2 \leq (\tfrac12 -\theta) \log C.$$
Since $0<\tfrac12-\theta<\tfrac12$ and $\log C\leq \log n +\log\log n$,
this implies
$$\theta \log n \leq 2+\tfrac12  \log\log n.$$
It is straightforward to check that this gives a contradiction if
$n>\theta^{-8/\theta}$.
\end{proof}

We remark that there is nothing special about the choice of
$\ell^\infty$-distance $1$ in the definition of clusters, except that it
is fairly natural in the context of the tangles that we have constructed.  The above
argument goes through (with different constants) for other choices of norm
and threshold distance.

\section{Lower bound for simple tangles}
\label{s:simple-lower}

\begin{proof}[Proof of \cref{lower}]
First assume that $n=r^2+2$. Consider the permutation
\begin{multline}
\pi=\\ \Bigl[ n,\;\; \underbrace{r+1, r, \dots, 2},\;\; \underbrace{2r+1, 2r,
\dots, r+2}, \;\;\dots,\;\; \underbrace{n-1, \dots, n-r},\;\;1\Bigr].
\end{multline}
 Thus, $\pi$ consists of $r$ blocks of length $r$, with each block
having its elements in reverse order, and with $1$ and $n$ in reverse order
at  the two ends. For example, for $r = 3$ the permutation is $\pi = [11,\;
4, 3, 2, \;7, 6, 5,\; 10, 9, 8, \;1]$.

Define \df{block} $i$ to be the set $B(i) = \{ir+2, \dots, ir+r+1\}$ for $i
=0, 1, \dots, r-1$. Let $T$ be a simple tangle that performs $\pi$. Every
path other than $1$ and $n$ has at least two moves, since it crosses paths
$n$ and $1$ in different directions.  Observe that paths of $B(i)$ and $B(j)$
do not cross each other for $i \neq j$. Call a path \df{bad} if it has at
least $3$ moves, and call a block \df{terrible} if it contains at most one
non-bad path. Next, we show that there are at most $3$ non-terrible blocks,
from which the result will follow easily.

Since paths from different blocks cannot cross each other, for any $i<j$, all
elements of $B(i)$ precede all elements of $B(j)$ in any permutation of the
tangle.  Now consider the location $(x,t)$ of the unique swap between paths
$1$ and $n$.  Recall that $(x,t)$ occurs between permutations $\pi_t$ and
$\pi_{t+1}$, and swaps the elements in locations $x$ and $x+1$. Let
$$H:=\bigl\{\pi_t(x-r-1),\ldots,\pi_t(x+r)\bigr\}$$
be the set of elements that are within distance $r$ on the left and right
just before this swap. The set $H$ has exactly $2r$ elements including $1$
and $n$.  Thus, by the previous observation, $H$ contains elements from at
most $3$ blocks. We will show that any block having no elements in $H$ is
terrible.

Suppose that $B(i)\cap H=\emptyset$.  By the argument of the previous
paragraph, either all elements of $B(i)$ are before all elements of $H$ in
the permutation $\pi_t$, or they are after. Without loss of generality,
assume the former. This implies that each path of $B(i)$ crosses $1$ before
it crosses $n$. Let $p<q$ be any elements of  $B(i)$.  We will show that at
least one of paths $p,q$ is bad.  Let $(y,s)$ be the location of the swap of
$p$ and $q$, and consider the permutation $\pi_{s}$. We consider six cases.

Suppose first that $\pi_{s}=[\ldots,1,\ldots,p,q, \ldots,n,\ldots]$ (which is
to say that $p$ and $q$ swap in the region above paths $1$ and $n$). Path $p$
has an R-move (to swap with $q$), then an L-move (to swap with $1$), then an
R-move (to swap with $n$). Therefore $p$ is bad. The case
$\pi_{s}=[\ldots,n,\ldots,p,q, \ldots,1,\ldots]$ (where $p$ and $q$ swap
below paths $1$ and $n$) can be treated symmetrically.

Suppose now that $\pi_{s}=[\ldots,p,q,\ldots,1,\ldots,n,\ldots]$ (which is to
say that $p$ and $q$ swap in the region left of paths $1$ and $n$, and at or
before time $t$, so $s\leq t$). The argument for this case is illustrated in
\cref{interval}. If $p$ is not bad, then path $p$ has an L-move (to swap with
1), followed by an R-move during which it swaps with both $q$ and $n$. Let
$H':=\{\pi_t(x-r-1),\ldots,\pi_t(x)\}$.  All elements of $H'$ are between $p$
and $n$ in $\pi_t$.  These elements do not swap with $p$ after time $t$,
because $\pi_t(x)=1$ has already swapped with $p$, while the others belong to
different blocks and so never swap with $p$.  Let $u$ be the time of the swap
of $p$ and $n$. Since $u\geq t$, all elements of $H'$ must swap with $n$
strictly before time $u$. Therefore $u-t\ge r$.  Therefore, path $p$ has at
least $r$ swaps at times in the interval $[t,u)$ (since $s\leq t$, so its
unique R-move is in progress throughout this interval). Since path $p$ also
swaps with $1$ and $n$,
 it has at least $r+2$ swaps in total,
which contradicts simplicity, since $p$ is involved in only $r+1$ inversions.
Thus, $p$ is bad. The case $\pi_{s}=[\ldots,p,q, \ldots,n,\ldots,1,\ldots]$
can be treated symmetrically.
\begin{figure}
\centering
\begin{tikzpicture}[thick,scale=0.35]
\filldraw[red,fill=red!30,rounded corners=5pt] (5.5,-9.5) rectangle (9.5,-8.5);
\node[red] at (5,-8){$H'$};
\draw[red,->,very thick] (6,-9.5) to[bend right] (6,-11.5);

\draw(1,-1) node[above left]{$1$}--(9,-9);
\draw(10,-9)node[above right]{$n$}--(4,-15);
\draw[blue](2,-1)node[above right]{$p$}--(-2,-5)--(-2,-8)--(5,-15);
\draw(-1,-8)--(-2,-9)node[below left]{$q$};

\draw[red] (9,-9)--++(1,-1);
\draw[red] (8,-10)--++(1,-1);
\draw[red] (7,-11)--++(1,-1);
\draw[red] (5,-13)--++(1,-1);

\draw[green!60!black] (4,-13)--++(-1,-1);
\draw[green!60!black] (3,-12)--++(-1,-1);
\draw[green!60!black] (2,-11)--++(-1,-1);
\draw[green!60!black] (1,-10)--++(-1,-1);

\node at (9,-9)[circle,inner sep=1.2pt,fill]{};
\node at (8,-9)[circle,inner sep=1.2pt,fill]{};
\node at (7,-9)[circle,inner sep=1.2pt,fill]{};
\node at (6,-9)[circle,inner sep=1.2pt,fill]{};

\draw[dashed] (11,-9.5)--++(2,0)node[right]{$t$}; \draw[dashed]
(6,-14.5)--++(2,0)node[right]{$u$}; \draw[dashed]
(-3,-8.5)--++(-2,0)node[left]{$s$};
\end{tikzpicture}
\caption{The key step in the proof of \cref{lower}.  The paths of $H'$ must
all cross path $n$ before time $s$, therefore at least as many paths must
cross path $p$ during the same time interval.} \label{interval}
\end{figure}
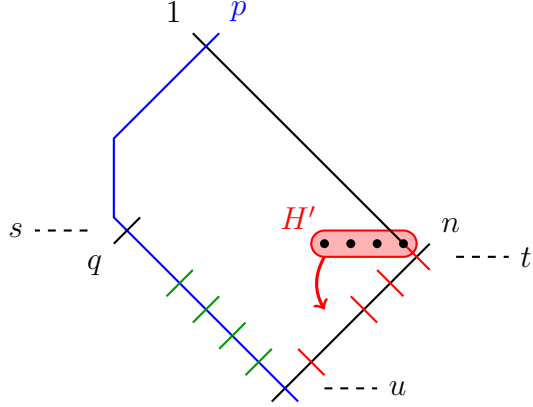

\sloppypar Finally, the cases $\pi_{s}=[\ldots, 1,
\ldots,n,\ldots,p,q,\ldots]$ and $\pi_{s}=[\ldots, n,
\ldots,1,\ldots,p,q,\ldots]$ are impossible, since together with our
assumption about $\pi_t$ they imply contradictions to simplicity.

Now we count moves.  There at least $r-3$ terrible blocks, each of which has
at least $r-1$ bad paths, which have at least $3$ moves, so the total number
of moves is at least
$$3(r-3)(r-1) \geq 3(r^2 - 4r) \geq 3n-c\sqrt n$$
for some $c>0$.

For general $n$, we use the same construction with $r=\lfloor
\sqrt{n-2}\rfloor$, add an extra $n-r^2-2< 2\sqrt n +1$ elements at the end
of the permutation, and adjust the constant.
\end{proof}

It is tempting to try to extend the ideas of the above proof to show that
there are permutations for which any simple tangle has $\gg n$ moves as
$n\to\infty$ (perhaps even $\Omega(n \log n)$).  A candidate permutation
might be constructed recursively: a ``level-$k$ permutation'' would have the
same structure as $\pi$ above, except with each block replaced with a smaller
level-$(k-1)$ permutation; the number of levels might be chosen to be of
order $\log n$ (or at least something $\gg 1$).  We have not succeeded in
completing such an argument.  Indeed, we do not know whether in fact $O(n)$
moves (or even $O(1)$ moves per path) suffice for a simple tangle.

\section{Per path lower bound}
\label{s:path-lower}

Finally, we prove a lower bound on moves per path that applies even for
non-simple tangles, as mentioned in the introduction.

\begin{prop}\label{3-moves}
For any $n>8$ there exists a permutation $\pi \in S_n$ such that any tangle
performing $\pi$ has a path with at least 3 moves.
\end{prop}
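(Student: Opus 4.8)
The plan is to exhibit an explicit permutation $\pi^\ast$ for the base case $n=9$ and to show that no tangle performs it with at most two moves in every path; for $n>9$ one uses a permutation of the same shape, with the extra symbols adjoined at one end so that a bounded-move tangle for the enlarged permutation would restrict to one for $\pi^\ast$. Everything therefore reduces to a structural analysis of tangles in which every path has at most two moves.

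The first step is to classify such paths. Writing $\mathrm{pos}_t(y)$ for the position of element $y$ in the $t$th permutation of the tangle, a path with at most two moves is of one of two types: \emph{monotone}, meaning $\mathrm{pos}_t(y)$ is a monotone function of $t$ (this subsumes $0$ moves, $1$ move, and two moves in the same direction, since two same-direction moves are necessarily separated by a vertical segment), or \emph{unimodal}, meaning it has exactly two moves in opposite directions, so there is a single turning time separating an increasing phase from a decreasing one or vice versa. For a monotone path the relative order of $y$ with any other element changes at most once; in particular, if $y$ moves monotonically to the right then \emph{every} element starting to the left of $y$ must finish to the left of $y$, and symmetrically on the left, while a never-moving $y$ can cross nobody at all. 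For a suitably designed $\pi^\ast$ this already eliminates the monotone option for a designated critical element $x$: $\pi^\ast$ is arranged so that some element smaller than $x$ finishes to the right of $x$ and some element larger than $x$ finishes to its left (and $x$ does not simply stay put), contradicting monotonicity in either direction.

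The crux is then to rule out $x$ being unimodal. For a unimodal path one records the turning time and the extreme (``peak'' or ``trough'') position; the crossed elements split into those crossed before the turn and those crossed after, and an element finishes on the far side of $x$ precisely when it lies strictly inside the peak excursion. I would then exploit the reversals of relative order forced by $\pi^\ast$: the element $x$ must cross certain prescribed elements that start on its left and certain others that start on its right, and because every auxiliary path is itself monotone or unimodal, the times and orders at which those crossings can occur are severely constrained. Concretely, one tracks, for each auxiliary path, which position it occupies during each of its at most two moves, together with a counting/parity accounting of how often two bounded-move paths can cross (at most four times, and not at all between two moves of the same direction). Running through the finitely many remaining shapes for the trajectories of $x$ and a few of its neighbours, each case yields two required crossings -- or a required crossing together with an incompatible position/time constraint -- that cannot be realised simultaneously, giving the contradiction. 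The hypothesis $n>8$ is what gives $\pi^\ast$ enough room to carry this rigid local pattern together with the surrounding elements that pin down the ambient structure.

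The step I expect to be by far the hardest is this last one. A two-move path is genuinely flexible -- it may detour left-then-right, right-then-left, or advance monotonically in two stages, and either participant in a crossing may be the one that moves -- so there is a real combinatorial explosion of configurations, and eliminating each requires delicate bookkeeping of which path occupies which position at which time under the combined move budgets of all the relevant paths. This is presumably why the authors call the argument ``surprisingly delicate'': the soft ingredients (the geometry of moves, the crossing-count constraints) only set the stage, and the genuine content is the combinatorial case check.
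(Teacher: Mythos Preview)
Your proposal is an outline rather than a proof, and two of its load-bearing steps do not work as stated.

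First, the reduction from general $n$ to $n=9$ is broken. Adjoining fixed points at one end does not let a tangle ``restrict'': in a tangle for the enlarged permutation, the paths of the original nine elements are free to wander into positions $10,\ldots,n$ (and the paths of the adjoined fixed points, each having up to two moves, may wander into positions $1,\ldots,9$). There is no operation that extracts a width-$9$ tangle for $\pi^\ast$ from this. The paper avoids any such reduction by working directly with a family $\pi_n$ for every $n>8$.

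Second, your strategy pins the contradiction on a single designated element $x$. But the proposition only asserts that \emph{some} path has three moves; which one may depend on the tangle. In the paper's permutation (and presumably in any workable $\pi^\ast$) different tangles force different paths to have a third move, so an argument that tries to trap one fixed $x$ will not close. This is why the paper's proof is organised around global structural constraints rather than around one element.

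The actual argument in the paper is quite different from your sketch. The permutation is $\pi=[n,\,3,2,\,n-3,\ldots,4,\,n-1,n-2,\,1]$, and the analysis hinges on the unique crossing of paths $1$ and $n$. Two lemmas do the real work: one (\cref{lm:rev-crosses}) controls how many times two interior paths can cross and where double-crossings sit relative to the $1$--$n$ swap; the other (\cref{lm:dx}) is a counting/parity argument on the region bounded by paths $1,n,a,b$ that forbids certain triples from crossing. The proof then splits on whether the tangle is simple, and in a handful of lines derives a contradiction from these lemmas. There is no brute-force enumeration of path shapes; the ``combinatorial case check'' you anticipate is replaced by the region argument of \cref{lm:dx}, which is the idea you are missing.
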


Our proof of this seemingly simple statement is surprisingly intricate, and
involves the two lemmas below.  The permutation will be
$$\pi := \bigl[n, \;\; 3, 2, \;\; n-3, n-4,
\dots, 5, 4,  \;\; n-1, n-2,\;\; 1\bigr].$$
 Recall that a pair of elements $i,j$ is said to be an
 inversion of a permutation $\pi$ if $i<j$ but $\pi^{-1}(i)>\pi^{-1}(j)$.

\begin{lemma} \label{lm:rev-crosses}
Let $T$ be a tangle performing any permutation of the form $\pi=[n,\ldots,1]$
with each path making at most $2$ moves. Let $1 <i < j< n$. If $i,j$ is an
inversion then paths $i$ and $j$ cross each other exactly once. If $i,j$ is
not an inversion then paths $i$ and $j$ either do not cross or cross exactly
twice. In the latter case, the permutation at the time $t$ just before paths
$1$ and $n$ cross is of the form $\pi_t = [\ldots, j, \ldots, 1,n, \ldots, i,
\ldots]$.
\end{lemma}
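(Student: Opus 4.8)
Write $p_k(s)$ for the position of element $k$ in the $s$th permutation of the tangle, so $p_k(0)=k$ and $p_k$ equals $\pi^{-1}(k)$ at the final permutation, and two paths $a,b$ cross precisely at the times when the sign of $p_a-p_b$ flips. The first step is to pin down paths $1$ and $n$. Since $\pi(1)=n$ and $\pi(n)=1$, element $n$ travels from position $n$ to position $1$ and element $1$ from position $1$ to position $n$; each is a displacement of magnitude $n-1$, the largest possible inside $[1,n]$. Hence, with only two moves available, path $n$ cannot contain an R-move (it would need to be cancelled by a longer L-move), so every move of path $n$ is an L-move and $p_n$ is non-increasing; symmetrically $p_1$ is non-decreasing. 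Therefore $p_1-p_n$ is non-decreasing, and as it runs from $1-n<0$ to $n-1>0$ it changes sign exactly once: paths $1$ and $n$ cross exactly once, at a swap location $(x,t)$ having element $1$ in position $x$ and element $n$ in position $x+1$ just before the swap---which already gives the form $\pi_t=[\dots,1,n,\dots]$ demanded in the last sentence. I would also record the ``rail inequalities'': $p_1(s)\le x<x+1\le p_n(s)$ for all $s\le t$, with both inequalities reversed for $s>t$.

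The bulk of the proof is a structural analysis of the other (``middle'') paths. For a middle path $k$, classify it as \textbf{Case L} if $p_k(t)\le x-1$ and \textbf{Case R} if $p_k(t)\ge x+2$ (positions $x,x+1$ being taken by $1$ and $n$), and aim to show that every middle path crosses path $1$ exactly once and path $n$ exactly once, a Case-L path crossing $1$ before time $t$ and $n$ afterwards, and a Case-R path doing the reverse. The ingredients are: (i) the crossings of a single path split into at most two monotone ``sweeps'', one per move; (ii) since $p_1,p_n$ are monotone, each direction in which $k$ can pass a given rail is restricted---either $k$ spends one of its moves to overtake the rail, or the rail sweeps past a vertical $k$, but this latter option still costs $k$ a unit ``bump'' move; (iii) the rail inequalities, which fix on which side of column $x$ any rail-crossing of $k$ must occur (before time $t$: at a column $\ge x$ for the $n$-rail and $\le x$ for the $1$-rail; after time $t$, the opposite); and (iv) the observation that one move crosses a fixed vertical line at most once, so $k$ crosses the ``gate'' between columns $x$ and $x+1$ at most twice overall. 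Feeding these into a parity count---the relative order of $k$, $1$ and $n$ at times $0$, $t$ and at the end is forced, in terms of whether $k<x$, whether $k$ is Case L or R, and whether $\pi^{-1}(k)<x$---one extracts a normal form for each middle path (Case-L paths drift weakly leftward before $t$ and weakly rightward after; Case-R paths the reverse), strong enough that two middle paths cross at most twice, and that when they cross twice the two crossings straddle time $t$. \textbf{I expect this to be the main obstacle:} excluding additional crossings means charging each crossing against a move of the path---remembering that a path is bumped aside, and so spends a move, whenever another path or a rail sweeps past it---and then checking, in a short but fiddly case analysis over the relative placement of $k$'s at most two moves and path $n$'s at most two L-moves around time $t$, that the budget of two moves is never exceeded; this is precisely the ``surprisingly intricate'' bookkeeping referred to above.

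Granting the normal form, the statement follows quickly. Fix $1<i<j<n$. Element $i$ begins left of element $j$ and ends on its left or right according as $(i,j)$ is a non-inversion or an inversion, so the number of crossings of paths $i$ and $j$ has the parity of the indicator of ``$(i,j)$ is an inversion'', while the normal form caps that number at $2$; hence an inversion pair crosses exactly once and a non-inversion pair crosses $0$ or $2$ times. Finally, suppose a non-inversion pair $i<j$ crosses exactly twice. By the normal form its two crossings lie on opposite sides of time $t$, so at time $t$---after the first crossing and before the second---element $i$ lies to the right of element $j$. Combined with the rail inequalities ($p_1(t)=x$, $p_n(t)=x+1$) and the Case-L/Case-R dichotomy, this is possible only with $j$ in Case L and $i$ in Case R, i.e.\ $p_j(t)\le x-1$ and $p_i(t)\ge x+2$; that is, $\pi_t=[\dots,j,\dots,1,n,\dots,i,\dots]$, as claimed.
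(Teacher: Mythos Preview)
Your setup for paths $1$ and $n$ is correct, and the parity observation is right, but the ``normal form'' you propose is false as stated, and the structural-analysis programme built on it is unnecessary. A Case-L middle path indeed has an L-move followed by an R-move---this much is forced, since every middle path crosses $1$ during an L-move and $n$ during an R-move (a swap with path $1$ moves $1$ rightward, hence the other party leftward), and lying left of the rails at time $t$ forces the L-move to come first. But nothing pins the switchover to time $t$: the R-move may begin well before $t$, provided it has not yet reached the crossing with path $n$, so the path can be moving rightward during part of $[0,t]$. Worse, your normal form taken literally would forbid any two Case-L paths from ever crossing (a swap requires one participant to move R), which is absurd whenever two middle elements form an inversion and both land on the same side at time $t$. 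So the ``main obstacle'' you flag is not merely fiddly bookkeeping left undone; the target statement itself needs to be weakened before any case analysis can succeed.

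The paper's proof bypasses all of this with a direct observation you never invoke: during any single move of path $i$, the difference $p_i-p_j$ is monotone (since $p_i$ changes by exactly $\pm1$ per step while $p_j$ changes by at most $1$), so there is at most one crossing with $j$ per move of $i$, hence at most two in total. Together with parity this settles the first two assertions in one line. For the last assertion the paper argues directly from the double-crossing pair, not from a global classification: if the non-inversion $i<j$ crosses twice, then at the first crossing $i$ moves R and $j$ moves L, and at the second the reverse, so $i$'s two moves are R-then-L and $j$'s are L-then-R. Hence $i$ crosses $n$ (during its unique R-move) before it crosses $1$ (during its unique L-move); at time $t$ the positions $x,x+1$ are occupied by $1,n$, and $i$ cannot lie to their left (that would mean having crossed $1$ but not $n$), so $i$ lies to the right. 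Symmetrically $j$ lies to the left. That is the entire argument---no classification of all middle paths, no rail bookkeeping, no case analysis.
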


\begin{proof}
Paths $i$ and $j$ must cross an odd number of times if $i,j$ is an inversion,
and an even number of times if not.  Since each path has at most $2$ moves,
they cannot intersect more than twice.

Suppose that paths $i$ and $j$ cross twice.  Then $i$ must have an R-move
followed by an L-move, and vice-versa for $j$.  Since any path other than $1$
and $n$ must cross path $1$ during an L-move and cross path during an R-move,
the claimed form of $\pi_t$ follows.
\end{proof}

\begin{lemma} \label{lm:dx} Let $T$ be a tangle performing a permutation of the form $\pi=[n,\ldots,1]$
with each path making at most $2$ moves.  Let $z < a < b$ be some paths of
$T$ that first cross $n$ and then cross $1$.  If path $z$ crosses neither $a$
nor $b$, then $a$ and $b$ do not cross each other.
\end{lemma}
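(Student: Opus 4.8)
The plan is to argue by contradiction: assume that paths $a$ and $b$ cross, and derive an impossibility. First I would pin down the shape of the three paths $z,a,b$. As in the proof of \cref{lm:rev-crosses}, path $n$ consists only of L\nobreakdash-moves and path $1$ only of R\nobreakdash-moves, so any other path can cross $n$ only while making an R\nobreakdash-move and cross $1$ only while making an L\nobreakdash-move; since each of $z,a,b$ crosses both, has at most two moves, and crosses $n$ before $1$, each must consist of exactly one R\nobreakdash-move followed by exactly one L\nobreakdash-move, meeting $n$ exactly once (inside the R\nobreakdash-move) and $1$ exactly once (inside the L\nobreakdash-move). Two such ``$\wedge$-shaped'' paths can cross at most once --- a crossing needs one of them on its R\nobreakdash-move and the other on its L\nobreakdash-move, and a short time-interval argument shows this can occur at most once --- so, under the assumption, $a$ and $b$ cross exactly once; as they start with $a$ to the left of $b$ this forces $\pi^{-1}(a)>\pi^{-1}(b)$. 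Since moreover $z$ starts to the left of both $a$ and $b$ (positions $z<a<b$) and never crosses either, path $z$ stays strictly to the left of both throughout, so $\pi^{-1}(z)<\pi^{-1}(b)<\pi^{-1}(a)$; combined with $\pi(1)=n$, $\pi(n)=1$, reading $\pi$ from left to right one meets $n,z,b,a,1$ in that order (in particular $z,b,a$ form a $132$ pattern).

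Next I would extract the order in which the relevant crossings happen. Write $\tau_i^n$ and $\tau_i^1$ for the times at which path $i$ crosses $n$ and $1$, and let $t^*$ be the time of the unique $1$--$n$ crossing. For $i\in\{z,a,b\}$, the three paths $1,i,n$ cross each other exactly once per pair; using that $i$ crosses $n$ before $1$, the relative order of these three must evolve through $(1,i,n)\to(1,n,i)\to(n,1,i)\to(n,i,1)$, which gives $\tau_i^n<t^*<\tau_i^1$ and places path $i$ strictly to the right of path $n$ at time $t^*$. Because $z$ is always to the left of $a$ and $b$: just before $z$ crosses $n$ it is immediately to the left of $n$, so $a$ and $b$ must already lie to the right of $n$, whence $\tau_a^n,\tau_b^n<\tau_z^n$; dually (just before $a$, resp.\ $b$, crosses $1$ it is immediately to the right of $1$, forcing $z$ to be already to the left of $1$) we get $\tau_z^1<\tau_a^1,\tau_b^1$. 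Running the same style of analysis on the subconfigurations $\{a,b,n\}$ and $\{1,a,b\}$ --- together with the fact that one cannot slip a third path strictly between two adjacent paths --- should show that the $a$--$b$ crossing takes place inside $a$'s R\nobreakdash-move (after $a$ has met $n$) and inside $b$'s L\nobreakdash-move (before $b$ has met $1$), and that the events are totally ordered as $\tau_b^n<\tau_a^n<\tau_z^n<t^*<\tau_z^1<\tau_b^1<\tau_a^1$, with the $a$--$b$ crossing somewhere in $(\tau_a^n,\tau_b^1)$.

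Finally I would derive the contradiction, and this is the step I expect to be the main obstacle. The picture is that path $z$ is squeezed: during its R\nobreakdash-move it must reach far enough to the right to meet $n$ (at $\tau_z^n$), and after turning around its L\nobreakdash-move must still be far enough to the right to be met by $1$ (at $\tau_z^1$), yet throughout the interval $[\tau_a^n,\tau_b^1]$ it must remain strictly to the left of $a$ and of $b$, which are themselves rigid $\wedge$-shapes obliged to perform their mutual crossing in the order established above. Converting the ordering of events into inequalities among the rightmost positions attained by $z,a,b$ and the positions at which each of them meets $n$ and $1$ --- exploiting that along any move a path's position changes by exactly $\pm1$ per unit of time --- should produce an impossible chain of inequalities. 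I expect this to require a short case split according to whether the $a$--$b$ crossing and the turnarounds of $z$, $a$, $b$ fall before or after $t^*$; handling these cases rigorously is the delicate part, in keeping with the paper's own warning that the proof of the parent proposition is ``surprisingly intricate''.
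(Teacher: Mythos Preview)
Your setup in the first two paragraphs is essentially correct: the $\wedge$-shape of $z,a,b$, the single $a$--$b$ crossing, and the ordering of the crossing times can all be established along the lines you indicate. The problem is the third paragraph. The contradiction you are hoping for --- an ``impossible chain of inequalities'' among the positions and times of the five paths $1,n,z,a,b$ --- does not exist. One can draw five abstract lattice curves satisfying every constraint you list. For instance, take $1(t)=1+t$, $n(t)=n-t$, and let $z,a,b$ be $\wedge$-shaped with starting positions $z\ll a<b$ and apex times $T_b<T_a\ll T_z$ chosen so that $z$'s apex occurs just after $z$ meets $n$ and just before $z$ would have met $b$; with large $n$ and suitable parities all your timing and non-crossing constraints are met simultaneously. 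So no amount of case-splitting on these five paths alone will close the argument.

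The paper's proof supplies the missing idea: the contradiction comes from the \emph{other} paths, not from $z,a,b$. One forms the region $NESW$ bounded by the segments of $a,b,1,n$ between their mutual crossings, and observes (purely from the geometry of $1,n,a,b$) that the lengths of opposite sides satisfy $\ell(WN)\le\ell(ES)$. Since these lengths count exactly the paths entering on an R-move through $WN$ and exiting on an R-move through $ES$, one gets $p(RL)\le p(LR)$, where $p(RL)$ and $p(LR)$ count paths traversing the region with the indicated move patterns. Path $z$ contributes to $p(RL)$, so $p(LR)\ge 1$. But an $LR$ path has already crossed $n$ (R-move) before entering through $NE$ (L-move), then exits through $ES$ (R-move), and must still cross $1$ afterwards (L-move): four moves. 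That --- not any inequality among $z,a,b$ --- is the contradiction.
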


\begin{figure}
\centering
\begin{tikzpicture}[scale=.27]
  \fill[blue!10] (0,0)--(3,3)--(3,6)--(6,9)--(14,1)--(6,-7)--(4,-5)--(4,-4);
  \draw[thick] (-1,-1)node[below]{$n$}--(3,3)--(3,6)--(7,10)node[above]{$n$};
  \draw[thick] (-1,1)node[above]{$1$}--(4,-4)--(4,-5)--(7,-8)node[below]{$1$};
  \draw[thick] (5,10)node[above]{$a$}--(15,0)node[below]{$a$};
  \draw[thick] (5,-8)node[below]{$b$}--(15,2)node[above]{$b$};
  \fill[blue] (0,0) node[left=5pt,blue]{$W$}circle (.25);
  \fill[blue] (14,1)node[right=5pt,blue]{$E$} circle (.25);
  \fill[blue] (6,-7)node[right=5pt,blue]{$S$} circle (.25);
  \fill[blue] (6,9)node[left=5pt,blue]{$N$} circle (.25);
  \draw[orange,thick] (2,2)--(6,-2)--(6,-5)--(5,-6);
  \draw[green!70!black,thick] (4,7)--(12,-1);
  \draw[green!70!black,thick] (2,-2)--(9.5,5.5);
  \draw[red,thick] (12,5)--(10,3)--(14,-1);
\end{tikzpicture}
\caption{Illustration of the proof of \cref{lm:dx}: the region formed by paths $a,b,1,n$,
and the four types of path that may intersect it.
An RL path necessitates an LR path, but an LR path requires two further moves
in order to cross paths $n$ and $1$.}\label{trouble}
\end{figure}
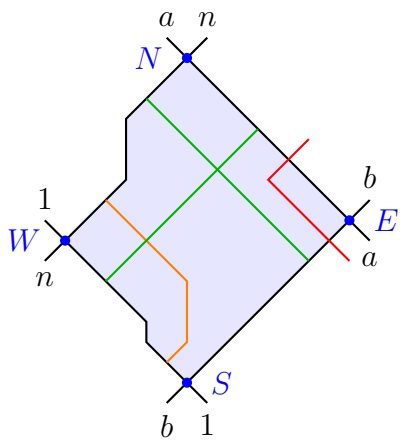
\begin{proof}
Suppose on the contrary that paths $a$ and  $b$ cross.  By
Lemma~\ref{lm:rev-crosses}, they cross only once.  Path $b$ cannot cross $a$
before $n$, since then $b$ would have more than $3$ moves.  Similarly, path
$b$ cannot cross $a$ after $1$, since $a$ would have $3$ moves.

Therefore, path $b$ crosses $n$, then $a$, then $1$.  Let $N,E,S,W$ be the
intersection points of the pairs of paths $(n,a),(a,b),(1,b),(1,n)$
respectively, all of which are unique by \cref{lm:rev-crosses}.  These points
are connected in clockwise order by four portions of the paths $a,b,1,n$,
which bound a region $NESW$.  See \cref{trouble}.  Note that any path other
than $1$ or $n$ has exactly one L-move and one R-move.  Therefore, the sides
$NE$ and $ES$ (which form part of paths $a$ and $b$) are straight line
segments. On the other hand, the sides $SW$ and $WN$ may each contain at most
one vertical segment, since paths $1$ and $n$ may have two moves in the same
direction separated by a vertical segment.

Let $\ell(SW)$ denote the number of intersections of the side $SW$ with paths
other than $1,n,a,b$ (which corresponds to the length of its non-vertical
portions), and similarly for each of the other three sides.  By the above
observations,
$$\ell(WN)\leq\ell(ES);\qquad \ell(SW)\leq \ell(NE).$$
Every path other than $1,n,a,b$ than intersects $NESW$ must do so either in a
single L-move or R-move, or with an L-move followed by an R-move, or
vice-versa.  Let $p(L),p(R),p(LR),p(RL)$ denote the numbers of paths in each
category.  We have
\begin{align*}
\ell(WN)&=p(RL)+p(R);&\quad
\ell(NE)&=p(LR)+p(L);\\
\ell(ES)&=p(LR)+p(R);&\quad
\ell(SW)&=p(RL)+p(L).
\end{align*}
Combining these equations with either of the above inequalities gives
$$p(RL)\leq p(LR).$$
We have $p(RL)\geq 1$, because of path $z$.  However, $p(LR)\geq 1$ gives a
contradiction, because such a path has at least $4$ moves, in order to cross
$n$, $a$, $b$ and $1$.
\end{proof}

\begin{proof}[Proof of \cref{3-moves}]
Consider $$\pi := \Bigl[n, \;\; \underbrace{3, 2}, \;\; \underbrace{n-3, n-4,
\dots, 5, 4},  \;\; \underbrace{n-1, n-2},\;\; 1\Bigr].$$ We denote $A = \{2,
3\}$, $B= \{4,\dots, n-3\}$, and $C= \{n-2, n-1\}$.

Suppose for a contradiction that there exists a tangle $T$ performing $\pi$
in which each path has at most $2$ moves. First suppose that $T$ is simple.
In each permutation of the tangle, the elements of $A$ precede the elements
of $B$, which precede the elements of $C$.  Let $t$ be the time of the swap
$1,n$, and suppose that some element $x$ appears to the right of this swap,
i.e.\ $\pi_t = [\ldots, 1, n, \ldots, x, \ldots]$.  If $x\in A\cup B$ then
paths $x<n-1<n-2$ contradict \cref{lm:dx}.  Thus $x\in C$.  Similarly, if
$\pi_t = [\ldots,y,\ldots, 1, n, \ldots]$ then $y\in A$.  Thus there is no
possible location for the elements of $B$ in $\pi_t$, a contradiction.

Suppose on the other hand that $T$ is not simple.  Thus there exist paths
$i,j$ that \df{double-cross} (i.e.\ have two crossings).  By
\cref{lm:rev-crosses}, the pair $i,j$ is not an inversion, therefore $i,j$
are from two different sets among $A,B,C$.  We claim that there exist $i' \in
A$ and  $j' \in C$ whose paths double-cross. Suppose not. Without loss of
generality, assume that $i \in A$ and $j \in B$ double-cross.  Since path $i$
and any path of $C$ do not double-cross, they do not cross at all by
Lemma~\ref{lm:rev-crosses}.  Since path $i$ has an R-move then an L-move, we
have $\pi_t = [\ldots, 1, n, \ldots, i,\ldots]$.  So paths $i<n-1<n-2$
contradict Lemma~\ref{lm:dx}. Thus $i',j'$ exist as claimed.

Since $n > 8$ and $|B| > 2$, there are at least two elements $u,v$ of $B$
that either both cross $1$ before $n$, or both cross $n$ before $1$.  Without
loss of generality, assume the latter.  Since paths $i'$ and $u$ both move
right then left, they cannot double-cross, and therefore by
\cref{lm:rev-crosses}, they do not cross.  By the same reasoning, $i'$ and
$v$ do not cross.  But now the paths $i'<u<v$ give a contradiction to
\cref{lm:dx}.
\end{proof}

\section*{Acknowledgements}

We thank Omer Angel, Franz Brandenburg, David Eppstein,
Martin Fink, Michael Kaufmann, Peter Winkler and Alexander
Wolff for valuable conversations.

\section*{Open problems}

\begin{enumerate}[1.]
\item What is the asymptotic behavior as $n\to\infty$ of the maximum over
    permutations $\pi\in S_n$ of the minimum number of moves among {\em
    simple} tangles that perform $\pi$?  In particularly, is it $O(n)$?
    Our results show only that it is between $3n-o(n)$ and $O(n\log n)$.
\item Similarly, what is the asymptotic behavior of the number of moves
    in the worst path (again, for the best simple tangle performing the
    worst permutation)?  Our bounds are $3$ and $O(\log n)$.
\item For general (not necessarily simple) tangles, what is the smallest
    constant $a$ for which there exists a tangle with at most $an$ moves
    for every permutation in $S_n$, for every $n$?  And what is the
    smallest $b$ for which we can achieve at most $b$ moves per path?  We
    know that $2\leq a\leq 4$ and $3\leq b\leq 5$.
\item Many natural questions arise concerning permutations that can be
    performed by tangles of various restricted types.  For example,
    suppose that the swaps of a tangle occupy all even locations in a
    simply connected region bounded above and below by interfaces
    consisting of North-East and South-East steps, and on the left and
    right by interfaces of South-West and South-East steps, as in
    \cref{splat}. Note in particular that there is one cluster, and no
    ``holes''.  It is not difficult to show that any permutation can be
    performed by such a tangle of depth at most $O(n^2)$ (see
    \cref{splat-alg} for the idea), but this seems far too large.  What
    is the minimum depth needed?  Is there a simple characterization of
    the set of permutations that can be performed if the depth is
    restricted to be at most $n$ (say)?
\end{enumerate}
\begin{figure}
\centering
\begin{subfigure}[c]{.45\textwidth}\centering
\includegraphics[width=.4\textwidth]{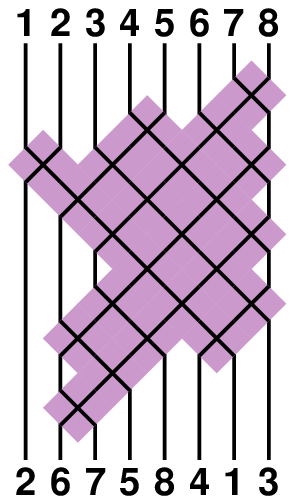}
\caption{A tangle occupying a simply connected region bounded by monotone interfaces,
as discussed in open problem 4.}
\label{splat}
\end{subfigure}
\hfill
\begin{subfigure}[c]{.45\textwidth}\centering
\includegraphics[width=.4\textwidth]{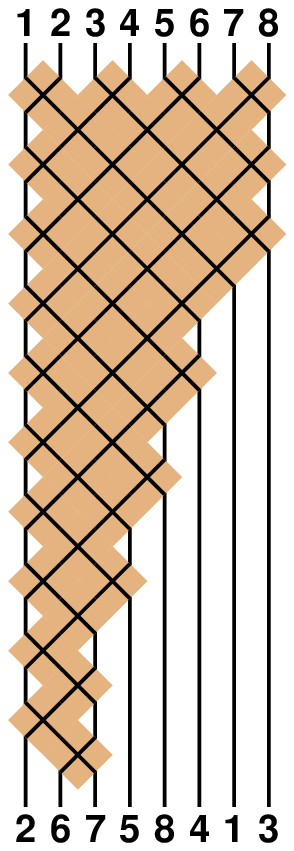}
\caption{A greedy construction of such a tangle: we apply alternate
rows of swaps in odd and even positions until path $\pi(n)$ is in the
rightmost position, then continue in the same way with locations $1,\ldots,n-1$.}
\label{splat-alg}
\end{subfigure}
\caption{}
\end{figure}

\bibliographystyle{abbrv}
\bibliography{tangles}
\end{document}